\numberwithin{equation}{section}
\newtheorem{theorem}{Theorem}[section]
\newtheorem{lemma}[theorem]{Lemma}
\newtheorem{proposition}[theorem]{Proposition}
\theoremstyle{remark}
\newtheorem{remark}[theorem]{Remark}
\theoremstyle{definition}
\newtheorem{definition}[theorem]{Definition}
\newtheorem{assumption}[theorem]{Assumption}
\newtheorem{claim}{Claim}
\newcommand{\R}{\mathbb{R}}
\DeclareMathOperator{\diam}{diam}
\title[Polynomial mixing under a certain stationary Euler flow]{Polynomial mixing\\ under a certain stationary Euler flow}
\date{\today}    
\author{Gianluca Crippa}
\address{Gianluca Crippa\\
	Department Mathematik und Informatik, Universität Basel}
\curraddr{Spiegelgasse 1, CH-4051 Basel, Switzerland}
\email{gianluca.crippa@unibas.ch}
\author{Renato Luc\`a}
\address{Renato Luc\`a\\
	Department Mathematik und Informatik, Universität Basel}
\curraddr{Spiegelgasse 1, CH-4051 Basel, Switzerland}
\email{renato.luca@unibas.ch}
\author{Christian Schulze}
\address{Christian Schulze\\
	Department Mathematik und Informatik, Universität Basel}
\curraddr{Spiegelgasse 1, CH-4051 Basel, Switzerland}
\email{christian.schulze@unibas.ch}
\keywords{Mixing of passive scalars, continuity equation, Euler equation, incompressible flows, shear flows}
\begin{document}

\begin{abstract}
We study the mixing properties of a scalar $\rho$ on the unit disk advected by a certain incompressible velocity field $u$, which is a stationary radial solution of the Euler equation. The scalar $\rho$ solves the continuity equation with the velocity field $u$ and we can measure the degree 
of \lq\lq mixedness'' of~$\rho$ with two different scales commonly used in this setting, namely the geometric and the functional mixing scale. We develop a physical space approach well adapted to the quantitative analysis of the decay in time of the geometric mixing scale, which turns out to be polynomial for a large class of initial data. This extends previous results for the functional mixing scale, based on the explicit expression for the solution in Fourier variable, results that are also partially recovered by our approach.
%
%
%
\end{abstract}

\maketitle

\section{Introduction}
We consider a passive scalar $\rho$ (also called tracer) on the two--dimensional unit disk $B_{1}(0)$, advected by a  divergence-free velocity field $u$ which is tangent to the boundary $\partial B_1(0)$. 
Given a mean-free initial condition $\rho_0$, the scalar $\rho$ satisfies the Cauchy problem for the continuity equation with velocity field $u$:
\begin{equation}
\label{Cauchyprob}
\begin{cases} \partial_t\rho + \textnormal{div} (u\rho) =0 & \mbox{on } [0,\infty) \times B_{1}(0) \\ \rho(0,\cdot)=\rho_{0} & \mbox{on }B_{1}(0) .\end{cases}
\end{equation}	
Observe that the mean-free condition for the tracer is preserved by the time evolution. 

In this note we study certain mixing properties of the solution $\rho$ under the action of the following autonomous velocity field  
\begin{equation}
\label{themixa}
u(t,r,\theta) = (u_{1}(r,\theta), u_{2}(r,\theta)) := 2 \pi r^{2} ( \sin \theta, - \cos \theta), \qquad t \geq 0 \, ,
\end{equation}
where $(r, \theta)$ are polar coordinates. Notice that $u$ is a smooth stationary solution to the two dimensional Euler equation
\begin{equation}
\partial_{t} u + ( u \cdot \nabla ) u = - \nabla P , \qquad \textnormal{div } u = 0 \, ,  
\end{equation}
with pressure $P = -|u|^{2}/2 + \mbox{const}$.

In fact, this velocity field is the canonical counterpart on the unit disk of a shear flow on the two dimensional flat torus. Mixing by shear flows has been studied in a variety of settings and geometries, most recently in connection with inviscid damping for the Euler equation (see in particular \cite{Bedrossian, Zill1, Zill2, Zeng} and the references therein). Heuristically, for the velocity field in \eqref{themixa}, mixing is due to the fact that, as a consequence of the increase of the angular component of $u$ in the radial direction, different portions of the tracer move close to others with different history and thus relatively different concentrations; see Figure~\ref{f:example}. 

	\begin{figure}\label{GreatFigure}
		\centering
		\begin{tabular}[b]{c}
			\includegraphics[width=.25\linewidth]{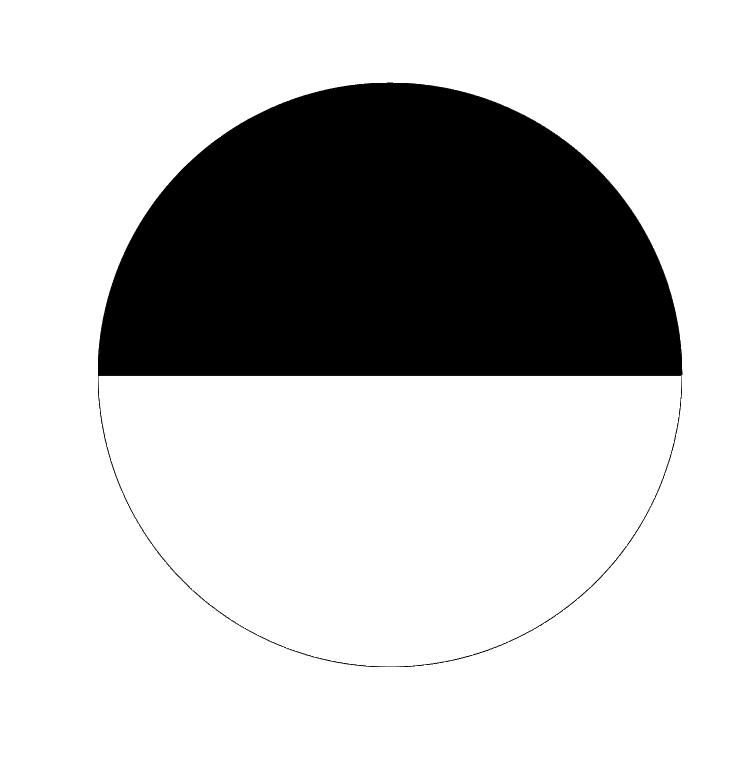} \\
			\small $t=0$
		\end{tabular} \qquad
		\hspace{1cm}
		\begin{tabular}[b]{c}
			\includegraphics[width=.25\linewidth]{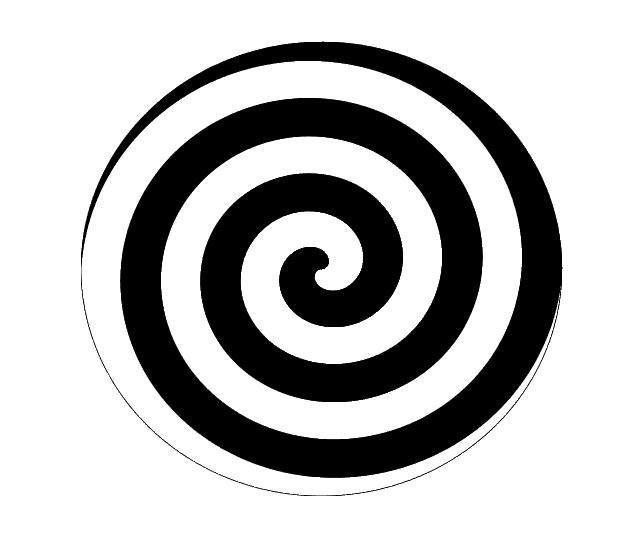} \\
			\small $t=5$
		\end{tabular}
		\caption{An example of evolution under the action of the velocity field \eqref{themixa}. We have $\rho = 1$ in the black region and
		$\rho = -1$ in the white region. \label{f:example}}
	\end{figure}

In this note we measure the degree of \lq\lq mixedness'' of the tracer $\rho$ with two different mixing scales which are commonly used in this setting. The first one is the \emph{geometric} mixing scale $\mathcal{G}(\cdot)$ introduced in \cite{Bressan}: 
\begin{definition}[Geometric Mixing Scale]
	\label{GeomS}
	Given an accuracy parameter $0<\kappa<1$, the geometric mixing scale of $\rho(t,\cdot)$ is the infimum $\varepsilon(t)$ of all $\varepsilon>0$ such that for every $x\in \mathbb{R}^2$ there holds 
	\begin{equation}\label{GeoScale}
	\left|\,\fint_{B_\varepsilon(x)}\rho(t,y)\,dy\right|\leq \kappa \|\rho(t,\cdot)\|_{L^\infty(B_1(0))} \, .
	\end{equation}
	We denote 
	$$ \mathcal{G}(\rho(t,\cdot)):= \varepsilon(t) \, . $$
\end{definition}

We systematically use the notation $B_r(x) \subset \mathbb{R}^{2}$ for the two--dimensional open disk centered at $x$ with radius~$r$ and we abbreviate $B_r(0)$ to $B_{r}$. 
The parameter $\kappa$, which measures the accuracy of the mixing, can be exploited in order to avoid pathological examples like the one 
discussed in Remark \ref{Pato}. 

The second mixing scale we use is the \emph{functional} one, which has been introduced in \cite{Multi} and subsequently widely employed in the applied fluid dynamics literature (see for instance the survey \cite{Thieff}): 
\begin{definition}[Functional Mixing Scale]
	\label{FuMS}
	The functional mixing scale of~$\rho(t,\cdot)$ is~$\|\rho(t,\cdot)\|_{\dot{H}^{-1}(B_1)}$.
\end{definition}
In the above definition, $\|\cdot\|_{\dot{H}^{-1}(B_1)}$ denotes the semi-norm in the negative 
homogeneous Sobolev space, defined in \eqref{tinaf}. Since we will always restrict to zero-mean functions, this 
actually turns to be a norm for our purposes. 

We observe that, although intuitively related, the two mixing scales in Definitions~\ref{GeomS} and \ref{FuMS} are not equivalent; see the examples and remarks in \cite{Lin}. The link between these two notions has been extensively analyzed in \cite{Zill4}. 

\medskip

We are interested in quantifying the weak convergence to zero (i.e., the average of the initial datum) of the solution of~\eqref{Cauchyprob} with the velocity field $u$ in \eqref{themixa}, that is, in quantifying the decay to zero (as a function of time) of the two mixing scales in Definitions~\ref{GeomS} and \ref{FuMS} for the solution. 

\begin{remark}\label{r:atmost}
It can be seen quite easily that the decay of any of the two mixing scales under consideration cannot be faster than polynomial. Indeed, such decay is controlled by the regularity with respect to the space variable of the ODE flow associated to $u$, and it is immediate to check that the Lipschitz constant of the flow grows linearly in time. Heuristically this ``slow mixing'' is due to the fact that the velocity field, being time-independent, can stir the solution at each point in one direction only. Due to this structural constraint the decay rate is therefore much slower than the exponential rate typically associated to self-similar (and therefore, heavily time-dependent) evolutions, a brief account of which will be given in Remark~\ref{r:exp}.
\end{remark}

\medskip

Let us consider the following assumption on the initial datum $\rho_0$ in \eqref{Cauchyprob}:
\begin{assumption}\label{equalradius3}
We assume that $\rho_0$ is a bounded function which is zero outside $\overline{B_1}$ and which satisfies the following condition of zero average on circles: 
		\begin{equation}
		\int_{\partial B_r}\rho_0\,dS_{r}=0 \, 
		\end{equation}
		for almost every $r>0$, where $dS_{r}$ is the uniform measure on the circle of radius $r$.
\end{assumption}
Under Assumption~\ref{equalradius3}, the argument in \cite[Theorem 2.1]{Zill1} (see also \cite{Zill2}) based on the explicit formula in Fourier variable for the solution $\rho$ gives that for any initial datum $\rho_0 \in L^2(B_1)$ the functional mixing scale of the solution converges to zero, i.e., $\|\rho(t,\cdot)\|_{\dot{H}^{-1}(B_1)} \to 0$. Assuming some regularity on the initial datum $\rho_0$ the same argument gives a rate of convergence, more specifically 
\begin{equation}\label{e:optimal}
\|\rho(t,\cdot)\|_{\dot{H}^{-1}(B_1)} \leq C t^{-\alpha} \qquad \text{ for any } \rho_0 \in \dot{H}^{\alpha}(B_1) \,,
\end{equation}
and
\begin{equation}\label{e:exponent}
\|\rho(t,\cdot)\|_{\dot{H}^{-1}(B_1)} \leq C t^{-\alpha/2} \qquad \text{ for any } \rho_0 \in \dot{W}^{\alpha,1}(B_1) \,.
\end{equation}

\begin{remark} (i) Without Assumption~\ref{equalradius3} one can see that the solution converges weakly in $L^2(B_1)$ to the function taking on each circle the constant value equal to the average of $\rho_0$ on the circle itself. (ii) Polynomial decay of the functional mixing scale can be proved for more general velocity fields, under suitable nondegeneracy conditions on the profile of the velocity. This is technically more complicated and requires the use of the method of stationary phase for oscillatory integrals; see the Appendix of~\cite{Bedrossian}. (iii) By means of examples it is proved in~\cite{Zill4} the optimality (up to iterated logarithmic loss) of the rate in~\eqref{e:optimal}. 
\end{remark}

To the best of our understanding such Fourier variable techniques cannot be applied to analyze the decay of the {\em geometric} mixing scale of the solution. Our objective in this note is to develop an approach in physical space well adapted to the study of the geometric mixing scale. It essentially consists of two steps:
\begin{itemize}
\item[(1)] Explicit analysis of the mixing rate for some specific step functions, and
\item[(2)] Approximation of a general function with step functions as in (1).
\end{itemize}
In this procedure the accuracy $\kappa$ and the regularity of the data will play an important role. 
Indeed, they will both influence the scale at which we can perform the approximation procedure with step 
functions (see Seciton \ref{Sec:Proofs}) and the analysis 
of the mixing rate step functions at this given scale (see Proposition \ref{gridini}).

Besides allowing the analysis for the specific example considered in the present paper, we believe that our approach could be useful in broader settings, in which the presence of more general geometries and velocity profiles makes the use of Fourier analysis techniques unfeasible.

\medskip

The first result that we obtain with this approach is that every bounded initial datum satisfying Assumption~\ref{equalradius3} gets mixed by the velocity field we are considering:
\begin{theorem}[Universality of the mixer]
		\label{MT}
		For any initial datum $\rho_{0} \in L^{\infty}$ supported in $\overline{B_{1}} $ which satisfies
		Assumption~\ref{equalradius3} we have 
		\begin{equation}
		\label{Heri}
		\mathcal{G}(\rho(t,\cdot))\to 0 \quad \text{ and } \quad \|\rho(t,\cdot)\|_{\dot{H}^{-1}(B_1)}\to 0,
		\qquad \text{as} \quad t\to \infty \, .
		\end{equation}
\end{theorem}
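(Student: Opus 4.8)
The plan is to exploit the explicit structure of the flow generated by \eqref{themixa}. First I would write down the ODE flow: since $u$ is purely angular with angular speed $2\pi r$ (the modulus of $u$ at radius $r$ is $2\pi r^2$, so the angular velocity $\dot\theta$ equals $-2\pi r$), each circle $\partial B_r$ is invariant and rotates rigidly with period $1/r$. Thus in polar coordinates the flow is $\Phi_t(r,\theta)=(r,\theta-2\pi r t)$, and the solution of the continuity equation (which here coincides with the transport equation since $u$ is divergence-free) is simply $\rho(t,r,\theta)=\rho_0(r,\theta-2\pi r t)$. This makes the problem a genuine shear in the $(r,\theta)$ variables with a strictly monotone shear profile $r\mapsto 2\pi r$.

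Next I would establish the $\dot H^{-1}$ decay. Using the zero-average-on-circles condition from Assumption~\ref{equalradius3}, expand $\rho_0(r,\cdot)$ in Fourier series on each circle, $\rho_0(r,\theta)=\sum_{k\neq 0} c_k(r)e^{ik\theta}$; then $\rho(t,r,\theta)=\sum_{k\neq 0} c_k(r)e^{-2\pi i k r t}e^{ik\theta}$. Testing against an $H^1_0$ function and integrating by parts in $r$ (the phase $e^{-2\pi i k r t}$ has $r$-derivative of size $\sim t$), one gets decay of each mode; the zero-average condition guarantees no stationary $k=0$ part survives. For merely $L^2$ (or $L^\infty$) data one argues by density: the bound is uniform, each fixed mode decays by Riemann--Lebesgue / non-stationary phase, and a tail estimate closes the argument, giving $\|\rho(t,\cdot)\|_{\dot H^{-1}(B_1)}\to 0$. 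This part is essentially the argument of \cite[Theorem 2.1]{Zill1} recalled in the introduction, so I would keep it brief.

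The heart of the theorem is the geometric mixing scale. Here I would follow the two-step scheme announced in the introduction. Step one: for a step function adapted to a fine grid in $(r,\theta)$ one computes directly how the shear $\theta\mapsto\theta-2\pi rt$ smears each cell across the angular variable; because the shear profile $r\mapsto 2\pi r$ is strictly increasing, at time $t$ two radii differing by $\delta r$ are sheared apart by an angle $\sim t\,\delta r$, so once $t\gtrsim 1/(\delta r\cdot(\text{angular cell size}))$ any small disk $B_\varepsilon(x)$ of a suitable radius $\varepsilon=\varepsilon(t)$ intersects cells carrying (nearly) canceling values, forcing the average in \eqref{GeoScale} below $\kappa\|\rho_0\|_\infty$ — this is exactly the content to be quoted from Proposition~\ref{gridini}. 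Step two: given a general $\rho_0\in L^\infty$ satisfying Assumption~\ref{equalradius3}, approximate it in $L^1$ (respecting the zero-average-on-circles structure) by such a step function $\rho_0^{(n)}$ with error controlled in terms of $\kappa$; since the flow preserves $L^1$ and $L^\infty$ norms, $\rho^{(n)}(t,\cdot)$ stays close to $\rho(t,\cdot)$ in $L^1$ uniformly in $t$, and an $L^1$-closeness of two functions transfers to closeness of their averages on disks of a fixed radius up to an $\varepsilon^{-2}$ loss; balancing the approximation scale against $\varepsilon$ and $\kappa$ yields $\mathcal G(\rho(t,\cdot))\to 0$.

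The main obstacle is Step two: the approximation scale at which the step-function analysis of Step one is valid depends on $\varepsilon$ and on $\kappa$, while the $L^1$-to-average transfer loses a factor $\varepsilon^{-2}$, so the two requirements must be reconciled. Concretely, one must choose, for each target scale $\varepsilon$, a grid fine enough that the step approximation error is $\ll\kappa\varepsilon^2\|\rho_0\|_\infty$, and then show (via Proposition~\ref{gridini}) that for $t$ large the step function is mixed at scale $\varepsilon$ with accuracy $\kappa/2$; the qualitative statement here only needs $\varepsilon\to 0$ along some sequence of times, so one does not yet need a quantitative rate, but the bookkeeping of the three parameters $(\varepsilon,\kappa,\text{grid size})$ against $t$ is the delicate point. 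Everything else — invariance of circles, the explicit form of $\rho(t,\cdot)$, preservation of $L^p$ norms, and the density argument for $\dot H^{-1}$ — is routine.
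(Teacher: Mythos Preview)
Your proposal is correct, and for the geometric mixing scale it follows essentially the two-step scheme of the paper: approximate $\rho_0$ in $L^1$ by a step function $\rho_0^N$ of the form~\eqref{InitialDatumForm} (preserving the zero-average-on-circles property), invoke Proposition~\ref{gridini} for $\rho^N$, and transfer back via the $L^1$-error on balls. One minor difference: you bound $\big|\fint_{B_\varepsilon(x)}(\rho-\rho^N)\big|$ directly by $(\pi\varepsilon^2)^{-1}\|\rho_0-\rho_0^N\|_{L^1}$, which is slightly cleaner than the paper's route through Chebyshev's inequality and a good-set/bad-set splitting (see~\eqref{TCHEB1}--\eqref{TCHEB2}); both lead to the same requirement $\|\rho_0-\rho_0^N\|_{L^1}\ll\kappa\,2^{-2M}\|\rho_0\|_{L^\infty}$ and the same conclusion.

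For the $\dot H^{-1}$ decay you take a genuinely different route from the paper. You expand in angular Fourier modes and appeal to Riemann--Lebesgue/non-stationary phase in $r$ (this is the argument of \cite{Zill1} the introduction already quotes), whereas the paper stays in physical space throughout: it controls tile averages of $\rho(t,\cdot)$ via the same approximation-by-step-functions argument and then converts this into an $\dot H^{-1}$ bound through the Poincar\'e-based Lemma~\ref{mpc2}. Your Fourier argument is shorter and perfectly valid for the qualitative statement here; the paper's physical-space proof has the advantage of being uniform with the geometric-scale argument and of extending to settings (alluded to in the introduction) where Fourier techniques are unavailable.
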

 
We are not able to give a quantitative rate of decay for such a general class of initial data as in Theorem~\ref{MT}. However, in the case when the initial datum is continuous, or has fractional Sobolev regularity, the approximation step in (2) in our strategy can be made quantitative. This allows us to prove the following result:
	\begin{theorem}\label{ratethm}
		Let $\rho_0$ be as in Theorem \ref{MT}.  
	\begin{itemize}
		\item[(i)] If $\rho_0\in C(\overline{B_1})$ then there exists an absolute constant $C >0$ and a constant $\widetilde{C}$ 
		which depends on the datum $\rho$ and on the accuracy $\kappa$ such that
		\begin{equation}\label{ContRes} \mathcal{G}(\rho(t,\cdot)) \leq \frac{C}{\kappa^{2} } t^{-1}, \qquad  \mbox{for all  $t\geq \widetilde{C}$} \, .
		\end{equation}
		 \item[(ii)] If $\rho_0 \in \dot{W}^{\alpha,1}(B_1)$ with $\alpha\in (0,1]$, then 
		\begin{equation}\label{SobRes}
		\mathcal{G}(\rho(t,\cdot))\leq C_1 t^{-\frac{\alpha}{2}} , \qquad  \mbox{for all  $t\geq C_{2}$} \, ,
		\end{equation}
		where the constants here depend on $\alpha$, $\kappa$, $\|\rho_0\|_{\dot{W}^{\alpha,1}(B_1)}$ and $\|\rho_{0}\|_{L^{\infty}}$. 
		\end{itemize}
		\end{theorem}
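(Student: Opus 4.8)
\medskip
\noindent\emph{Overall plan.} I would implement the two steps announced above: approximate $\rho_0$ by a step function, then quote Proposition~\ref{gridini} for the evolution of the latter. First, since multiplying $\rho_0$ by a nonzero constant affects neither~\eqref{GeoScale} nor the quantities in the statement, I would normalize $\|\rho_0\|_{L^\infty}=1$. The flow $\Phi_t$ of $u$ is explicit, $\Phi_t(r,\theta)=(r,\theta-2\pi rt)$ in polar coordinates; it is measure--preserving, leaves invariant each circle $\{|y|=r\}$ and $\overline{B_1}$, and $\rho(t,\cdot)=\rho_0\circ\Phi_{-t}$. Hence, if $g\in L^\infty$ is supported in $\overline{B_1}$ and $\widetilde\rho$ solves~\eqref{Cauchyprob} with datum $g$, then $\widetilde\rho(t,\cdot)=g\circ\Phi_{-t}$ and $\|\rho(t,\cdot)-\widetilde\rho(t,\cdot)\|_{L^p}=\|\rho_0-g\|_{L^p}$ for all $p\in[1,\infty]$ and all $t$, so that for every ball $B_\varepsilon(x)$
\begin{equation}\label{e:splitplan}
\left|\,\fint_{B_\varepsilon(x)}\rho(t,y)\,dy\,\right|\;\le\;\left|\,\fint_{B_\varepsilon(x)}\widetilde\rho(t,y)\,dy\,\right|\;+\;\min\!\left\{\|\rho_0-g\|_{L^\infty(B_1)}\,,\ \frac{\|\rho_0-g\|_{L^1(B_1)}}{\pi\varepsilon^2}\right\}.
\end{equation}
It then suffices to pick a step function $g$ obeying Assumption~\ref{equalradius3} and a radius $\varepsilon_0$ for which both terms in~\eqref{e:splitplan} are $\le\kappa/2$ uniformly in $x$, which gives $\mathcal{G}(\rho(t,\cdot))\le\varepsilon_0$.

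\medskip
\noindent\emph{Step function and grid estimate.} For $h>0$ I would let $g_h$ be constant on each cell of a polar grid of mesh $\le h$, equal there to the cell average of $\rho_0$, and then corrected ring by ring (subtracting ring averages) so as to force Assumption~\ref{equalradius3}. Then $g_h$ is supported in $\overline{B_1}$ with $\|g_h\|_{L^\infty}\le2$, and --- using that $\rho_0$ itself has vanishing circle averages, so the ring correction is controlled by the oscillation of $\rho_0$ over annuli of width $h$ --- one has
\begin{equation}\label{e:approxplan}
\|\rho_0-g_h\|_{L^\infty(B_1)}\le C\,\omega_{\rho_0}(h)\,,\qquad\|\rho_0-g_h\|_{L^1(B_1)}\le C\,h^{\alpha}\,\|\rho_0\|_{\dot{W}^{\alpha,1}(B_1)}\,,
\end{equation}
the first bound when $\rho_0\in C(\overline{B_1})$ (with $\omega_{\rho_0}$ its modulus of continuity), the second --- a cellwise Poincaré estimate --- when $\rho_0\in\dot{W}^{\alpha,1}(B_1)$, $\alpha\in(0,1]$. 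Applying Proposition~\ref{gridini} to $\widetilde\rho_h=g_h\circ\Phi_{-t}$ with accuracy $\kappa/4$ should then give an absolute constant $C$ and a threshold $t_*(h,\kappa)$ with
\begin{equation}\label{e:gridplan}
\left|\,\fint_{B_\varepsilon(x)}\widetilde\rho_h(t,y)\,dy\,\right|\le\frac\kappa4\,\|g_h\|_{L^\infty}\le\frac\kappa2\qquad\text{for all }x\in\R^2,\ t\ge t_*(h,\kappa),\ \varepsilon\ge\frac{C}{\kappa^2 t}\,.
\end{equation}

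\medskip
\noindent\emph{Putting it together.} For part~(i), with $\rho_0\in C(\overline{B_1})$, I would fix $h=h(\rho_0,\kappa)$ so small that $C\,\omega_{\rho_0}(h)\le\kappa/2$ in~\eqref{e:approxplan}; then the $L^\infty$ alternative in the minimum in~\eqref{e:splitplan} is $\le\kappa/2$ for \emph{every} $\varepsilon$, and combining it with~\eqref{e:gridplan} at $\varepsilon=C/(\kappa^2 t)$ yields $|\fint_{B_\varepsilon(x)}\rho(t,\cdot)|\le\kappa=\kappa\|\rho(t,\cdot)\|_{L^\infty}$ for all $x$ whenever $t\ge\widetilde{C}:=t_*(h(\rho_0,\kappa),\kappa)$, which is~\eqref{ContRes}. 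For part~(ii), with $\rho_0\in\dot{W}^{\alpha,1}(B_1)$, I would instead take $h=h(t):=c_\kappa\,t^{-1}$ with $c_\kappa$ chosen large enough that $t\ge t_*(h(t),\kappa)$ for $t\ge1$; then~\eqref{e:gridplan} applies for every $\varepsilon\ge C/(\kappa^2 t)$, while by~\eqref{e:approxplan} the $L^1$ alternative in~\eqref{e:splitplan} is at most $C\,c_\kappa^{\alpha}\,\|\rho_0\|_{\dot{W}^{\alpha,1}}/(\pi\,t^{\alpha}\varepsilon^2)$. Choosing $\varepsilon=\varepsilon(t):=C_1\,t^{-\alpha/2}$ --- which dominates $C/(\kappa^2 t)$ for $t$ large, since $\alpha\le1<2$ --- with $C_1=C_1(\alpha,\kappa,\|\rho_0\|_{\dot{W}^{\alpha,1}})$ large enough forces that $L^1$ term below $\kappa/2$, so $\mathcal{G}(\rho(t,\cdot))\le C_1\,t^{-\alpha/2}$ for $t\ge C_2$; undoing the normalization restores the stated dependence on $\|\rho_0\|_{L^\infty}$, giving~\eqref{SobRes}.

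\medskip
\noindent\emph{Main obstacle.} Granting Proposition~\ref{gridini}, the real work sits in two places. First, the construction of $g_h$ must \emph{simultaneously} satisfy Assumption~\ref{equalradius3} --- a bare cellwise average need not --- and the ring correction must be shown not to spoil~\eqref{e:approxplan}; this is mild bookkeeping but genuinely needed. Second, and more delicate, is the calibration of mesh against time in part~(ii): for $\dot{W}^{\alpha,1}$ data only an $L^1$ approximation error is available, and $\|\rho_0-g_h\|_{L^1}/|B_\varepsilon|\sim h^{\alpha}/\varepsilon^2$ blows up as $h\to0$ at fixed $\varepsilon$, so one is forced to take $\varepsilon\gtrsim h^{\alpha/2}$ and hence to keep $h$ as \emph{large} as is compatible with the grid being \lq\lq wound up'' by the flow at time $t$, namely $h\sim t^{-1}$; it is precisely this trade-off that turns the rate $t^{-1}$ available for regular data (cf.\ Remark~\ref{r:atmost}) into the exponent $\alpha/2$ of~\eqref{SobRes}. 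The remaining points --- the verification of~\eqref{e:approxplan}, and the case of balls $B_\varepsilon(x)$ meeting $\partial B_1$ or the origin --- I would treat as routine, the latter being subsumed in Proposition~\ref{gridini}.
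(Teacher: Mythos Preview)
Your plan is correct and matches the paper's overall strategy: approximate $\rho_0$ by a radially piecewise constant function satisfying Assumption~\ref{equalradius3}, then invoke the step-function analysis. Part~(i) is essentially identical to the paper's argument.

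Part~(ii) differs in implementation. The paper works on tiles $Q\in\mathcal{Q}_M$ rather than on balls, uses Chebychev's inequality to control the measure of the set where $|\rho-\rho^{\sigma(M)}|$ is large, and appeals to Lemma~\ref{Wutang} (since the approximation scale $\sigma(M)$ exceeds the tile scale $M$). You instead work directly on balls and use the cruder bound $\fint_{B_\varepsilon}|\rho-\widetilde\rho_h|\le\|\rho_0-g_h\|_{L^1}/(\pi\varepsilon^2)$, balancing $h\sim t^{-1}$ against $\varepsilon\sim t^{-\alpha/2}$. Both routes yield the exponent $\alpha/2$; yours is a bit more streamlined in that it bypasses the Chebychev splitting and Lemma~\ref{Wutang} altogether.

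Two minor points. First, the ``ring correction'' you worry about is in fact unnecessary: if you take cell averages over the annular tiling $\mathcal{Q}_N$ (or any polar grid with equal-angle cells at each radial level), the zero-circle-average condition is inherited automatically from $\rho_0$, as the paper verifies in~\eqref{SeroMeanCirslesPract}. Second, the claim that the ball-average bound from Proposition~\ref{gridini} holds for \emph{all} $\varepsilon\ge C/(\kappa^2 t)$ is not a literal consequence of that statement (which only bounds the infimum $\mathcal{G}$), but it does follow from the proof of Lemma~\ref{mpc}, where the estimate on $\bigl|\fint_{B_\varepsilon(x)}\rho\bigr|$ is visibly monotone decreasing in $\varepsilon$.
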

Observe that \eqref{SobRes} entails the same rate as in \eqref{e:exponent} for the functional mixing scale. Moreover, let us stress that we obtain 
an explicit rate even for continuous functions, without requiring any fractional Sobolev regularity. In fact, recalling the discussion in Remark~\ref{r:atmost}, the decay rate in~\eqref{ContRes} turns out to be optimal. 

\medskip

In fact, it is possible to exploit our approach also for the analysis of the decay of the functional mixing scale. However, due to our method entailing an approximation step, we just obtain a decay rate slower than the one ensured by the exact computation in Fourier variable:		
		\begin{proposition}\label{rateprop}
		Let $\rho_0$ be as in Theorem \ref{MT}.  
			\begin{itemize}
			\item[(i)] If $\rho_0\in C^{0,\alpha}(B_1)$ with $\alpha\in (0,1]$, then  
		 	\begin{equation}\label{HoeldRes}
			\|\rho(t,\cdot)\|_{\dot{H}^{-1}(B_1)}\leq   C_{3}t^{-\frac{\alpha}{\alpha+1}}, 
			\qquad  \mbox{for all  $t\geq C_{4}$} \, ,
			\end{equation}
			where the constants here depend on $\alpha$, $\|\rho_0\|_{C^{0,\alpha}(B_1)}$ 
			and $\|\rho_{0}\|_{L^{\infty}}$.  			
                         \item[(ii)] If $\rho_0 \in \dot{W}^{\alpha,1}(B_1)$ with $\alpha\in (0,1]$, then 
			\begin{equation}\label{SobRes2}
			\|\rho(t,\cdot)\|_{\dot{H}^{-1}(B_1)}\leq  C_{5}t^{-\frac{\alpha}{\alpha+4}}, 
			\qquad  \mbox{for all  $t\geq C_{6}$}\, ,
			\end{equation}
			where the constants here depend on $\alpha$, $\|\rho_0\|_{\dot{W}^{\alpha,1}(B_1)}$ 
			and $\|\rho_{0}\|_{L^{\infty}}$.
			\end{itemize}
		\end{proposition}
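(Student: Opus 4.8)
The plan is to run, now for the $\dot{H}^{-1}$ norm, the same two--step scheme of approximation by step functions followed by an explicit mixing estimate that underlies Theorem~\ref{ratethm}. Fix a scale $\delta\in(0,1)$, to be optimised at the end, and let $\rho_0^\delta$ be the step function obtained by replacing $\rho_0$ with its average on each cell of the polar grid of size $\delta$ adapted to the flow (the grid used in Section~\ref{Sec:Proofs}). As there, one arranges that $\rho_0^\delta$ satisfies Assumption~\ref{equalradius3} and that $\|\rho_0^\delta\|_{L^\infty}\le\|\rho_0\|_{L^\infty}$. Writing $\rho^\delta$ for the solution of \eqref{Cauchyprob} with datum $\rho_0^\delta$, the triangle inequality gives
\begin{equation*}
\|\rho(t,\cdot)\|_{\dot{H}^{-1}(B_1)}\le\|\rho(t,\cdot)-\rho^\delta(t,\cdot)\|_{\dot{H}^{-1}(B_1)}+\|\rho^\delta(t,\cdot)\|_{\dot{H}^{-1}(B_1)},
\end{equation*}
and it suffices to control the two terms on the right separately.

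For the \emph{approximation term} we use that $u$ is divergence free, so that the associated flow is measure preserving and the continuity equation propagates every $L^p$ norm; applied to the difference $\rho-\rho^\delta$ (itself a solution, by linearity), this gives $\|\rho(t,\cdot)-\rho^\delta(t,\cdot)\|_{L^p}=\|\rho_0-\rho_0^\delta\|_{L^p}$ for all $p\in[1,\infty]$ and all $t\ge0$. On the bounded domain $B_1$ the Poincar\'e inequality yields the continuous embedding $L^2(B_1)\hookrightarrow\dot{H}^{-1}(B_1)$, hence
\begin{equation*}
\|\rho(t,\cdot)-\rho^\delta(t,\cdot)\|_{\dot{H}^{-1}(B_1)}\lesssim\|\rho_0-\rho_0^\delta\|_{L^2(B_1)}.
\end{equation*}
In case (i), since $\rho_0\in C^{0,\alpha}$, the cell--averaging error is controlled pointwise, $\|\rho_0-\rho_0^\delta\|_{L^\infty}\lesssim[\rho_0]_{C^{0,\alpha}}\,\delta^\alpha$, so the right--hand side is $\lesssim\delta^\alpha$. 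In case (ii), since $\rho_0\in\dot{W}^{\alpha,1}$, the standard estimate for approximation by cell averages only gives the $L^1$ bound $\|\rho_0-\rho_0^\delta\|_{L^1}\lesssim\|\rho_0\|_{\dot{W}^{\alpha,1}}\,\delta^\alpha$; interpolating with the trivial bound $\|\rho_0-\rho_0^\delta\|_{L^\infty}\le 2\|\rho_0\|_{L^\infty}$ we obtain $\|\rho_0-\rho_0^\delta\|_{L^2}\lesssim\|\rho_0\|_{L^\infty}^{1/2}\|\rho_0\|_{\dot{W}^{\alpha,1}}^{1/2}\,\delta^{\alpha/2}$. Passing through $L^2$ is essentially forced, since in two dimensions $L^1(B_1)$ does \emph{not} embed into $\dot{H}^{-1}(B_1)$; this interpolation is the origin of the additional loss of \eqref{SobRes2} relative to \eqref{HoeldRes}.

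For the \emph{step--function term} we invoke Proposition~\ref{gridini}, which shows that the step function $\rho^\delta$ is mixed, at the level of the geometric scale, down to its intrinsic resolution $\delta$ as soon as $t$ exceeds an explicit threshold depending on $\delta$ and on the accuracy $\kappa$. To turn this into an $\dot{H}^{-1}$ bound we use the elementary comparison inequality: for every $\sigma\in L^\infty(B_1)$ and every $0<\kappa<1$,
\begin{equation*}
\|\sigma\|_{\dot{H}^{-1}(B_1)}\lesssim\|\sigma\|_{L^\infty}\bigl(\mathcal{G}(\sigma)+\kappa\bigr),
\end{equation*}
where $\mathcal{G}(\sigma)$ denotes the geometric mixing scale with accuracy $\kappa$. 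This is proved by splitting $\sigma=\sigma\ast\phi_\varepsilon+(\sigma-\sigma\ast\phi_\varepsilon)$ at the scale $\varepsilon=\mathcal{G}(\sigma)$, with $\phi_\varepsilon$ the normalised indicator of $B_\varepsilon$: the definition of $\mathcal{G}$ gives $\|\sigma\ast\phi_\varepsilon\|_{L^\infty}\le\kappa\|\sigma\|_{L^\infty}$, so the low--frequency part contributes $\lesssim\kappa\|\sigma\|_{L^\infty}$ (again by $L^2(B_1)\hookrightarrow\dot{H}^{-1}(B_1)$), while a Fourier multiplier estimate bounds the high--frequency part $\sigma-\sigma\ast\phi_\varepsilon$ in $\dot{H}^{-1}$ by $\lesssim\varepsilon\|\sigma\|_{L^2}\lesssim\varepsilon\|\sigma\|_{L^\infty}$. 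Applying this with $\sigma=\rho^\delta(t,\cdot)$, using $\|\rho^\delta(t,\cdot)\|_{L^\infty}=\|\rho_0^\delta\|_{L^\infty}\le\|\rho_0\|_{L^\infty}$, and choosing $\kappa$ as small as Proposition~\ref{gridini} allows at time $t$ (so that $\mathcal{G}(\rho^\delta(t,\cdot))\lesssim\delta$ still holds), one arrives at a bound of the schematic form
\begin{equation*}
\|\rho^\delta(t,\cdot)\|_{\dot{H}^{-1}(B_1)}\lesssim\|\rho_0\|_{L^\infty}\Bigl(\delta+\frac{1}{\delta^{m}\,t}\Bigr),
\end{equation*}
for a suitable integer $m\ge1$ whose value depends on how \emph{rough} the approximating step function is allowed to be, and is therefore different in the two cases.

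It remains to insert these estimates into the triangle inequality and optimise in $\delta$. In case (i) one balances the approximation error $\delta^\alpha$ against the decaying step--function term; the optimal choice $\delta\sim t^{-1/(\alpha+1)}$ gives the rate $t^{-\alpha/(\alpha+1)}$ of \eqref{HoeldRes}. In case (ii) one balances instead the interpolated error $\delta^{\alpha/2}$ against the (larger, for this $m$) step--function term, and the optimal $\delta$ produces the rate $t^{-\alpha/(\alpha+4)}$ of \eqref{SobRes2}. The thresholds $t\ge C_4$, $t\ge C_6$ encode that $t$ must be large enough both for the selected $\kappa$ to lie in $(0,1)$ and for the hypothesis of Proposition~\ref{gridini} to hold at the optimal scale. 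The main obstacle is the geometric analysis of the evolved step functions embodied in Proposition~\ref{gridini}, which we are assuming; granting that, the only genuinely new ingredients are the comparison inequality above and the bookkeeping of the optimisation, with the $L^1$--$L^\infty$ interpolation in case (ii) being exactly what prevents the method from reaching the sharp Fourier--space rate \eqref{e:exponent}.
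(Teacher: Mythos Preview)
Your overall two–step scheme (approximate by a step function, then use the explicit mixing of the step function) is indeed the paper's strategy, but the way you execute the second step has a genuine gap that prevents you from reaching the stated rates.

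The problem is the detour through the geometric scale. Your comparison inequality $\|\sigma\|_{\dot{H}^{-1}}\lesssim\|\sigma\|_{L^\infty}(\mathcal{G}(\sigma)+\kappa)$ is correct, but combining it with Proposition~\ref{gridini} forces you to pay for $\kappa$ twice: the geometric bound there reads $\mathcal{G}(\rho^\delta(t,\cdot))\le C/(\kappa^2 t)$ under the threshold $t\ge C/(\kappa\delta)$. If you now insist, as you do, that $\mathcal{G}(\rho^\delta)\lesssim\delta$, you are forced to take $\kappa\gtrsim(\delta t)^{-1/2}$, and the comparison inequality then only gives $\|\rho^\delta(t,\cdot)\|_{\dot{H}^{-1}}\lesssim\delta+(\delta t)^{-1/2}$, not the form $\delta+(\delta^m t)^{-1}$ that you write down. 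Optimising this against $\delta^\alpha$ in case~(i) yields $t^{-\alpha/(2\alpha+1)}$, strictly worse than the claimed $t^{-\alpha/(\alpha+1)}$ (e.g.\ $t^{-1/3}$ versus $t^{-1/2}$ at $\alpha=1$). Your assertion that the exponent $m$ ``is different in the two cases'' is also not justified: the mixing bound on $\rho^\delta$ depends only on the step function, not on the regularity class of the original $\rho_0$.

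The paper avoids this loss by never passing through the geometric scale. It bounds the tile averages $\bigl|\fint_Q\rho(t,\cdot)\bigr|$ directly, splitting them as $\bigl|\fint_Q(\rho-\rho^N)\bigr|+\bigl|\fint_Q\rho^N\bigr|$ on each $Q\in\mathcal{Q}_M$, and then applies Lemma~\ref{mpc2} to convert a uniform bound on tile averages into an $\dot{H}^{-1}$ bound. The crucial ingredient is Lemma~\ref{Wutang} (not Proposition~\ref{gridini}): it shows that a step function of resolution $2^{-N}$ is mixed at the \emph{coarser} tile scale $2^{-M}$ already for $t\gtrsim 2^{M+N}$, which is much earlier than the $t\gtrsim 2^{2N}$ needed for mixing at its own resolution. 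This decoupling of the approximation scale $N$ from the averaging scale $M$ is exactly what produces the exponents $\alpha/(\alpha+1)$ and $\alpha/(\alpha+4)$. In case~(ii) the paper controls the approximation term on each tile via Chebyshev's inequality rather than via the $L^2$ embedding. Incidentally, if you repaired your argument by replacing Proposition~\ref{gridini} with Lemma~\ref{Wutang} and Lemma~\ref{mpc2}, your triangle inequality in $\dot{H}^{-1}$ together with the $L^1$--$L^\infty$ interpolation would actually yield $t^{-\alpha/(\alpha+2)}$ in case~(ii), improving on the paper's exponent; but as written the argument does not deliver either rate.
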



\begin{remark}[condition of zero average on circles]
Without Assumption~\ref{equalradius3} the results of Theorems \ref{MT} and \ref{ratethm} and Proposition \ref{rateprop} cannot hold (for a fixed but arbitrary accuracy parameter~$\kappa$, in the case of the geometric mixing scale). Consider for instance an initial datum which is $-1$ on an 
inner disk and $+1$ on an outer annulus, as in Figure~\ref{Notmixing}. This particular example does not get mixed (indeed it is a stationary solution of  \eqref{Cauchyprob}). We prove in Proposition~\ref{NecCond} that Assumption~\ref{equalradius3} is in fact necessary in order for a bounded initial density to get mixed by the velocity field $u$. 
\end{remark}

\begin{figure}[h]
	\begin{center}
	\scalebox{0.25}{\input{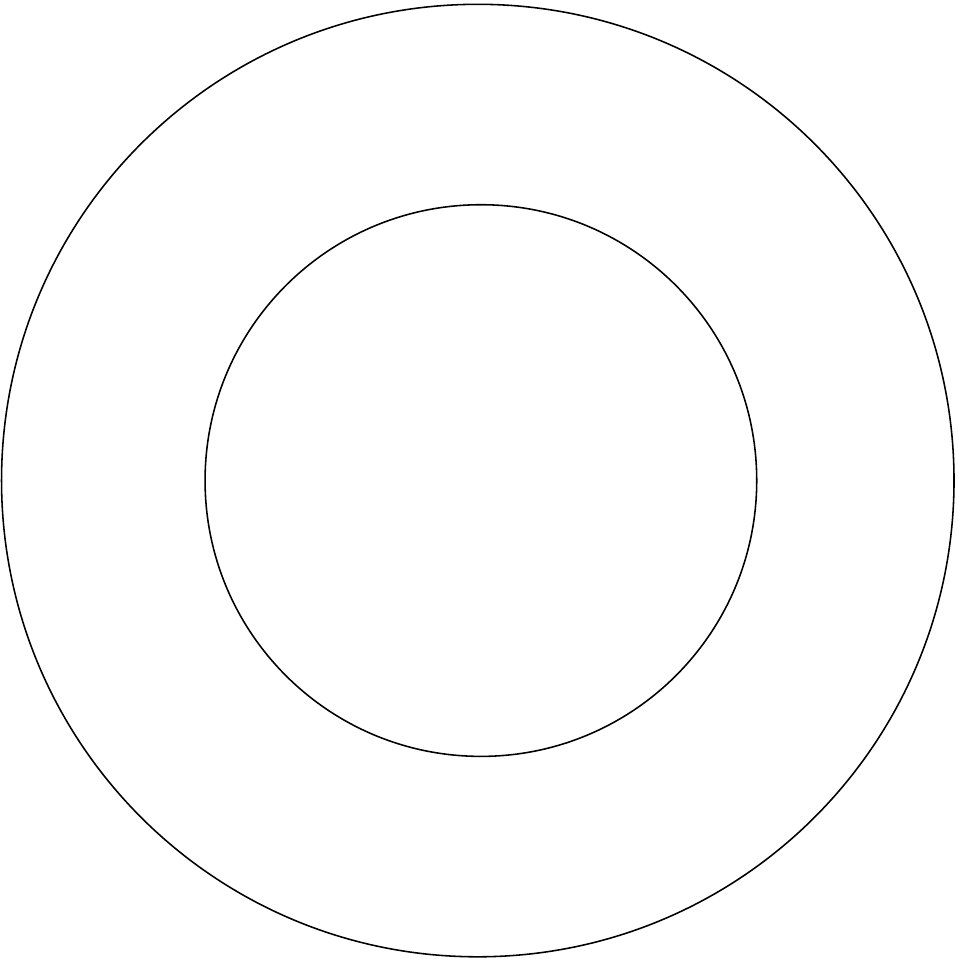tex_t}}
\end{center}
		\caption{An example of an initial datum that is not mixed by the velocity field $u$. \label{Notmixing}}
	\end{figure}

\begin{remark}[role of the accuracy parameter $\kappa$]\label{Pato}
Given an accuracy parameter $\kappa$ we consider an initial datum which is equal to $\kappa$ on an inner
disk, $-\kappa$ on an intermediate annulus, and $+1$ and $-1$ on the upper and lower half of an outer annulus (see Figure \ref{ocho}). By a simple inspection of the proof of Proposition \ref{gridini} we see that the
geometric mixing scale $\mathcal{G}(\rho(t,\cdot))$ with accuracy parameter $\kappa$ decays like $1/t$.
However, the solution clearly does not converge to zero weakly in $L^2$ in the inner disk and in the intermediate annulus, where it is in fact stationary. To overcome this pathological behavior we notice that the geometric mixing scale does not go to zero as long as we choose any finer accuracy parameter $0< \kappa' < \kappa$. This suggests that also the accuracy $\kappa$ plays an important role in the analysis of mixing, see also \cite{Zill4}. Indeed, in Proposition \ref{WeakDecay} we show that if $\mathcal{G}(\rho(t,\cdot))$ decays to zero for 
all $\kappa \in (0,1)$, then the solution $\rho(t, \cdot)$ converges to zero weakly in $L^{2}$.
This is also equivalent to the decay to zero of the functional mixing scale, see \cite{Lin},
providing a quantitative measure of the level of mixedeness in the ergodic sense; see \cite{LinThif,Multi}.
\end{remark}

\begin{figure}[h]
	\begin{center}
		\scalebox{0.3}{\input{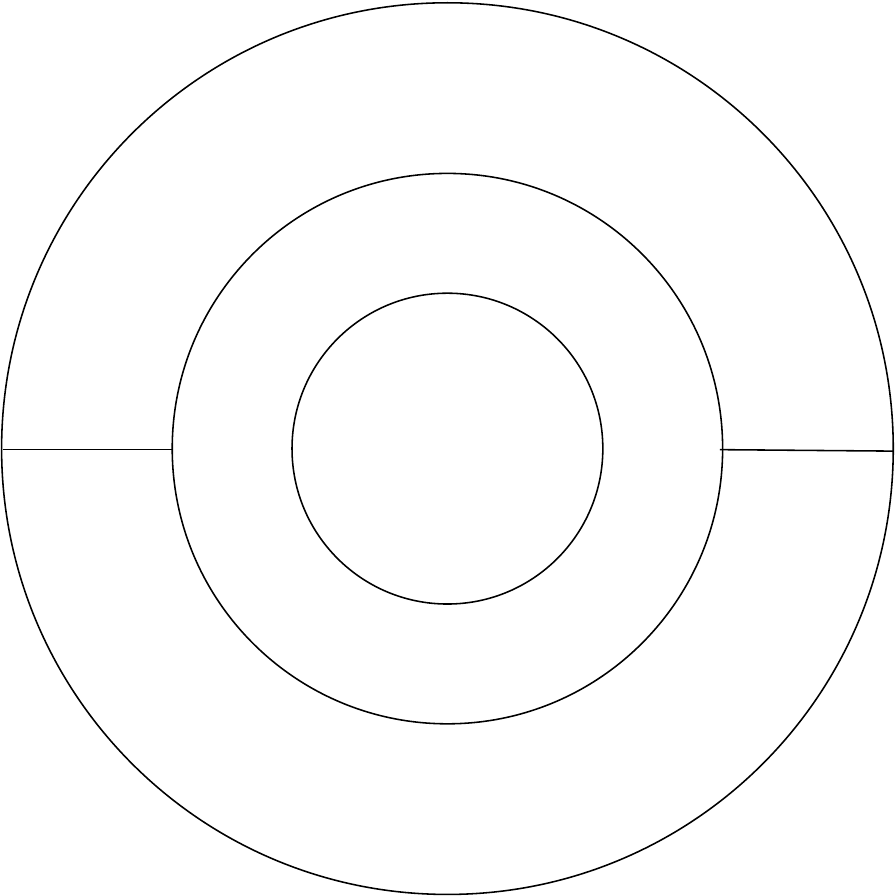tex_t}}
	\end{center}
	\caption{Example of an initial datum whose geometric mixing scale goes to zero, but the functional mixing scale does not. \label{ocho}  }
\end{figure}

		\begin{remark}[behaviour of the constants]
We can not predict the behavior of the constant $\widetilde{C}$ that appears in~\eqref{ContRes}, which depends on the modulus
of continuity of the initial datum $\rho_{0}$. However, 
\eqref{ContRes} gives a precise asymptotic upper bound for the geometric mixing scale of continuous 
initial data, namely that
$$
\kappa^{2} \, t \, \limsup_{t\to \infty} \mathcal{G}(\rho(t,\cdot)) \leq C  \, , 
$$   
where $C$ is an absolute constant (in particular it is independent on $\rho_{0}$).
If we assume some fractional Sobolev regularity on $\rho_{0}$, we see that the geometric mixing scale decays at a polynomial rate that depends on the regularity of the initial data. 
It is worth to remark that, in contrast to $\widetilde{C}$,
all the constants $C_j$, $j=1, \ldots, 6$, will be explicitly estimated in the proofs of the inequalities \eqref{SobRes}, \eqref{HoeldRes}, and \eqref{SobRes2}.    
In particular, looking at \eqref{SobT}, \eqref{C2Diverge}, and \eqref{Sob2T}, we see that $C_{2}, C_{4}, C_{6} \to \infty$ as $\alpha \to 0$ and looking at \eqref{TchRhs}, \eqref{TchRhsBis}, \eqref{Finally1}, \eqref{TchRhsBisTris}, and \eqref{TchRhsBisFour}, we see that the 
constants $C_{1}, C_{3}, C_{5}$ are bounded as 
$\alpha \to 0$.  
		\end{remark}

%

\begin{remark}[exponential mixing under cellular velocity fields]\label{r:exp}
We have already commented on the fact that the rate of decay of the mixing scales is polynomial and not faster due to the strong constraint that the velocity field is smooth and time-independent. To put into context the results of this note we briefly review some of the 
explicit analytical examples of {\em exponential} mixing available in the literature, constructed in different settings.

In connection with a conjecture stated by Bressan~\cite{Bressan}, Crippa and De~Lellis~\cite{LellisCrippa} showed that if the velocity field has a uniform in time bound on the $\dot{W}^{1,p}$ norm, where $1<p\leq\infty$, then the geometric mixing scale of the solution to the continuity equation cannot decay faster than exponentially. Iyer, Kiselev and Xu~\cite{Kiselev} and Seis~\cite{Seis} later showed similar bounds for the functional mixing scale, hence
\begin{equation}
\label{lowerb}
\mathcal{G}(\rho(t,\cdot))\geq C e^{-ct}\hspace{0.5cm}\text{ and }\hspace{0.5cm}\|\rho(t,\cdot)\|_{\dot{H}^{-1}}\geq C e^{-ct} \, ,
\end{equation}
where $C>0$ and $c>0$ are constants depending on the initial datum $\rho_{0}$ and on the given bounds on the velocity field. In order to prove the sharpness of the bounds in~\eqref{lowerb}, Yao and Zlato\v{s} \cite{Yao} and Alberti, Crippa, and Mazzucato~\cite{AlbCrippaCras,AlbCrippa} constructed explicit velocity fields with the above constraints, and initial data, to which the associated solution gets mixed at an exponential rate. 

By interpolation, there is a strong connection between the decay of $\|\rho(t,\cdot)\|_{\dot{H}^{-1}}$ and the increase 
of the positive Sobolev semi-norms of $\rho(t,\cdot)$. By an iteration and scaling argument with the optimal mixer of \cite{AlbCrippa}, the authors 
of \cite{AlbCrippa2} constructed a divergence-free velocity field in $L^{\infty}(W^{1,p})$, for any given $1\leq p<\infty$, and a solution~$\rho$ to the continuity equation, so that $\rho_{0}\in C^{\infty}$ and $\rho(t,\cdot)$ does not belong to $\dot{H}^s$ for any $s>0$ and $t>0$. 

Both the example of \cite{Yao} and \cite{AlbCrippa} use a similar inductive structure for the construction. The basic idea is to equally redistribute the tracer at each step among a finer sub-grid, as schematically visualized in Figure \ref{biggie}, with tracer movements localized in the cells.
\begin{figure}[h]
	\begin{center}
		\scalebox{0.5}{\input{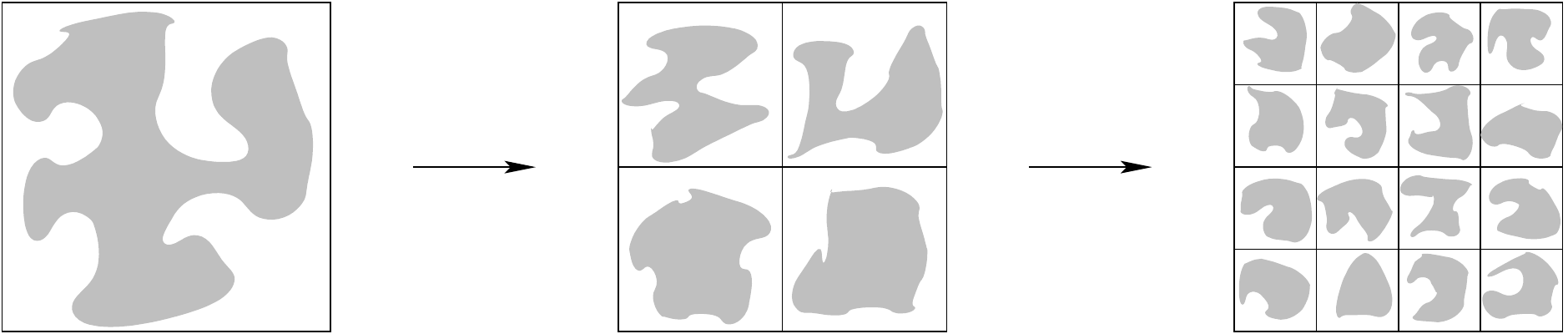tex_t}}
	\end{center}
	\caption{First two steps of a cellular flow \label{biggie}}
\end{figure}
Velocity field of this so called cellular type cannot be universal mixers (see Appendix C in \cite{CellP}), which means that they 
cannot mix \emph{every} initial datum $\rho_{0}$. In fact, in the results above the velocity field only mixes a specific, 
conveniently constructed, initial datum.  Furthermore, the results in \cite{CellP} show that under a uniform-in-time bound on the $\dot{W}^{s,p}$ norm, 
where $s>1$ and $1<p\leq\infty$, any velocity field of cellular type cannot mix faster than polynomial. The numerical simulations in \cite{Lin}  suggests that exponential decay is still possible under this constraint on the norm, and hence in this case the cellular structure is responsible for slowing down the mixing process. Observe that the example in \eqref{themixa} is clearly not of cellular type. Examples of exponential universal 
mixers of non cellular type have been 
constructed in \cite{EZ}.

Other examples of mixing velocity fields were constructed in order to prove the non-uniqueness
 of solutions of the continuity equation (\cite{Depauw,Bressan,Lin}) in the case 
 where $u \notin L^{1}((0,T) ; BV)$. In this case, it is possible to have perfect mixing in finite time. By inverting time, such a perfect mixer produces 
 a non-trivial solution of the continuity equation with zero initial datum. The structure used to construct these examples is similar to the cellular type described above.
\end{remark}

\subsection*{Structure of the Paper}

The rest of the paper is organized as follows. 
In Section~\ref{pcdata} we consider a family of initial data which are piecewise constant in the radial direction, for which we can 
prove a decay of order $1/t$ for the geometric mixing scale, as well as 
a decay of order $1/\sqrt{t}$ for the functional mixing scale. This follows by a combination of the main computation 
in Lemma~\ref{rnm} and other auxiliary Lemmas in Subsections~\ref{billpu} and~\ref{sunkd}. The proofs of Theorems~\ref{MT} and \ref{ratethm} and of Proposition~\ref{rateprop} (in Subsections~\ref{merge}, \ref{takeA}, and \ref{takeB} respectively) are performed by a suitable approximation of 
different families of initial data 
with piecewise constant data, for which we can use the results of Section \ref{pcdata}. In the appendix we show that Assumption~\ref{equalradius3} is necessary for the tracer to get mixed, exploiting the role of the accuracy $\kappa$ 
in connection with the weak convergence to zero of the tracer.  

\subsection*{Acknowledgments} This research has been supported by the ERC Starting Grant 676675 FLIRT. The authors would like to thank Christian Zillinger for useful feedback and enlightening remarks on a first version of this manuscript, and for pointing out the connections with the theory in \cite{Bedrossian, Zill1, Zill2, Zeng}. The authors would also like to thank Bohan Zhou for useful remarks concerning the appendix of the manuscript.

\section{Preliminaries and the Case of Piecewise Constant Data}

Hereafter the domain of all the 
function spaces we take into account will be most of the times the two--dimensional disk $B_1$, so that in such cases we will not specify this anymore.  
For instance, we simply write $H^{s}$, $\| \cdot \|_{H^{s}}$ instead of $H^{s}(B_1)$, $\| \cdot \|_{H^{s}(B_1)}$, and so on.

\subsection{Piecewise Constant Data}\label{pcdata} 
		
		Here we first focus on a specific class of initial data, that are piecewise constant along the radial direction and satisfy Assumption~\ref{equalradius3}. More precisely, we consider 
		\begin{equation}\label{InitialDatumForm}
		\rho_{0}(r,\theta) := \sum_{\ell=0}^{2^{N}-1} \chi_{( \ell 2^{-N}, (\ell+1)2^{-N}  ]} (r) f^{\ell}(\theta), \quad N \in \mathbb{N} \cup \{0\} \, ,
		\end{equation}
		where $f^{\ell} \in L^{\infty}(\mathbb{T})$ and
		\begin{equation}\label{ZeroMeanAss}
		\int_{0}^{2\pi} f^{\ell}(\theta) d\theta =0 \, .
		\end{equation}
		 For instance, when $N=0$ and $f^{0}(\theta) := \chi_{(0,\pi]}(\theta) - \chi_{(\pi,2\pi]}(\theta)$, we are considering the 
		simple initial data
		which equals~$1$ in the upper half disk and~$-1$ in the lower half disk; see Figure \ref{GreatFigure}.

	\begin{proposition}
		\label{gridini}
		There exists an absolute constant $C$ such that the following holds. For $\rho_{0} \in L^{\infty}$ of the form~\eqref{InitialDatumForm}, we have 
	\begin{equation}\label{SpecialDataFunctGeom}
		\mathcal{G}(\rho(t,\cdot)) \leq \frac{C}{\kappa^{2} t}, \quad \text{for} \quad t \geq C \frac{2^{N}}{\kappa}\, ,
          \end{equation}
	           and
	\begin{equation}\label{SpecialDataFunct}
	     \|\rho(t,\cdot)\|_{\dot{H}^{-1}}\leq \frac{C\|\rho\|_{L^{\infty}}}{\sqrt{t}}	, \quad \text{for} \quad t \geq C2^{2N}\,.
         \end{equation}
	\end{proposition}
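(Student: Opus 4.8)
The plan is to reduce everything to the behaviour of the flow map of $u$ in polar coordinates and then to an explicit average computation over small disks, which I expect to be organized through the auxiliary Lemma~\ref{rnm} mentioned in the paper's outline. First I would write the ODE flow of $u$: since $u$ in \eqref{themixa} has vanishing radial component and angular speed proportional to $r$ (the angular velocity $\dot\theta = 2\pi r$ is constant in time and independent of $\theta$), each circle $\{r=\text{const}\}$ is invariant and a point at radius $r$ rotates with angular velocity $2\pi r$. Hence the flow at time $t$ sends $(r,\theta)\mapsto(r,\theta+2\pi r t)$, and the solution of \eqref{Cauchyprob} (which here is just transport, since $u$ is divergence free) is $\rho(t,r,\theta)=\rho_0(r,\theta-2\pi r t)$. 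For a datum of the form \eqref{InitialDatumForm} this means that on the annulus $A_\ell=\{\ell 2^{-N}<r\le(\ell+1)2^{-N}\}$ we have $\rho(t,r,\theta)=f^\ell(\theta-2\pi r t)$, i.e.\ a profile that is sheared more and more strongly in the radial direction as $t$ grows, with ``radial wavelength'' of the oscillation comparable to $1/t$ once $t$ is large (precisely, the phase $2\pi r t$ sweeps a full period as $r$ varies over an interval of length $1/t$, which is $\ll 2^{-N}$ exactly when $t\gtrsim 2^N$, explaining the threshold on $t$ in both \eqref{SpecialDataFunctGeom} and \eqref{SpecialDataFunct}).

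For the geometric mixing scale \eqref{SpecialDataFunctGeom}: fix $\varepsilon$ and a point $x$, and estimate $\fint_{B_\varepsilon(x)}\rho(t,y)\,dy$. I would split into the easy case where $B_\varepsilon(x)$ is far from the origin and from $\partial B_1$, so that it sits inside a bounded number of the annuli $A_\ell$, and on each such annulus the integrand is (a translate of) $f^\ell(\theta-2\pi r t)$. Passing to the coordinate along the radial direction, the integral over $B_\varepsilon(x)$ becomes an integral of $f^\ell$ against its heavily oscillating argument; using $\int_0^{2\pi}f^\ell=0$ from \eqref{ZeroMeanAss}, the oscillation in $r$ makes this average small once the disk $B_\varepsilon(x)$ spans many periods, i.e.\ once $\varepsilon t\gtrsim$ (something)$/\kappa$ — here is where the factor $\kappa^{-2}$ and the constant $C$ enter, since to beat $\kappa\|\rho(t,\cdot)\|_{L^\infty}=\kappa\|\rho_0\|_{L^\infty}$ one needs the relative error from the incomplete last period, of size $\sim 1/(\varepsilon t)$ times $\|f^\ell\|_{L^\infty}$, to be $\le\kappa$; a cancellation argument over pairs of adjacent periods can be expected to upgrade $1/\kappa$ to $1/\kappa^2$ after also controlling the contribution of the boundary-of-annulus crossings and of disks meeting $\{r=0\}$ or $\{r=1\}$ (whose measure is $O(\varepsilon^2)$ and hence harmless once $\varepsilon$ is of order $1/(\kappa^2 t)$). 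Concluding: for $\varepsilon\ge C/(\kappa^2 t)$ and $t\ge C2^N/\kappa$ one gets \eqref{GeoScale}, so $\mathcal G(\rho(t,\cdot))\le C/(\kappa^2 t)$.

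For the functional mixing scale \eqref{SpecialDataFunct}: I would use the characterization $\|\rho\|_{\dot H^{-1}}=\|\nabla\Delta^{-1}\rho\|_{L^2}=\|\nabla\phi\|_{L^2}$ where $\Delta\phi=\rho$ with zero boundary/mean conditions. Since $\rho(t,r,\theta)=f^\ell(\theta-2\pi rt)$ on each annulus, expand each $f^\ell$ in Fourier series in $\theta$, $f^\ell(\theta)=\sum_{k\ne 0}c^\ell_k e^{ik\theta}$; then $\rho(t,r,\theta)=\sum_{k\ne 0}c^\ell_k e^{ik\theta}e^{-2\pi i k r t}$, an eigenfunction-in-$\theta$ times a radially oscillating factor of frequency $\sim kt$. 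Solving $\Delta\phi=\rho$ mode by mode (the $\theta$-Laplacian contributes $-k^2/r^2$ and the radial part sees a function oscillating at scale $1/(kt)$), one finds $\widehat\phi_k(r)$ is smaller than $\widehat\rho_k(r)$ by a factor $\sim 1/(kt)^2$ in $L^2_r$ away from the degenerate regions; summing in $k$ and $\ell$ and using $\sum_{k,\ell}|c^\ell_k|^2\lesssim\|\rho_0\|_{L^2}^2\lesssim\|\rho_0\|_{L^\infty}^2$ yields $\|\rho(t,\cdot)\|_{\dot H^{-1}}\lesssim\|\rho_0\|_{L^\infty}/t$ for the smooth-in-$\theta$ part; the crude bound $\|\rho(t,\cdot)\|_{\dot H^{-1}}\lesssim\|\rho_0\|_{L^\infty}$ always holds, and interpolating/combining (or more simply: the low modes $|k|\lesssim\sqrt t$ contribute $\lesssim\|\rho_0\|_{L^\infty}/\sqrt t$ trivially from the $\dot H^{-1}\hookleftarrow L^\infty$ bound localized in frequency, while $|k|\gtrsim\sqrt t$ contribute $\lesssim\|\rho_0\|_{L^\infty}/t\cdot\sqrt t$ — wait, balancing the two gives exactly $t^{-1/2}$) produces \eqref{SpecialDataFunct} for $t\ge C2^{2N}$, the threshold $2^{2N}$ coming from needing the oscillation scale $1/t$ to beat $2^{-N}$ uniformly even for the lowest relevant frequencies together with the Poincaré constant on annuli of width $2^{-N}$.

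The main obstacle, I expect, is the careful handling of the two degenerate regions — the inner disk near $r=0$, where polar coordinates degenerate and the shear $2\pi rt$ is weak, and the annulus boundaries $r=\ell 2^{-N}$ where the profile $f^\ell$ jumps — because there the clean ``oscillation kills the average'' heuristic fails and one must absorb the error into the allowed $O(\varepsilon^2)$-measure sets and into the threshold conditions $t\ge C2^N/\kappa$ resp.\ $t\ge C2^{2N}$. Getting the sharp power $\kappa^{-2}$ (rather than $\kappa^{-1}$) in \eqref{SpecialDataFunctGeom} is the other delicate point and is presumably where a second-order cancellation (pairing consecutive oscillation periods, or integrating by parts twice against the zero-mean primitive of $f^\ell$) is needed.
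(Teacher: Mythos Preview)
Your outline has the right dynamical picture (the explicit flow $(r,\theta)\mapsto(r,\theta+2\pi rt)$ and the resulting shear), but it diverges from the paper's argument in both organization and, for the $\dot H^{-1}$ part, in method.

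\textbf{Geometric scale.} The paper does \emph{not} estimate $\fint_{B_\varepsilon(x)}\rho$ directly. It first introduces an annular tiling $\mathcal Q_M$ of the disk into pieces $Q_{ij}^M$ of diameter $\simeq2^{-M}$ that are exactly adapted to the flow (each tile sits in a single annulus $A_\ell$ once $M>N$), proves in Lemma~\ref{rnm} that $|\fint_Q\rho(t,\cdot)|\le\frac\kappa4\|\rho\|_{L^\infty}$ for every $Q\in\mathcal Q_M$ as soon as $t\ge C2^M/\kappa$, and then passes from tile averages to disk averages by a purely geometric covering step (Lemma~\ref{mpc}), which costs an extra factor $1/\kappa$ because of the boundary annulus of width $\simeq2^{-M}$. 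The two $\kappa^{-1}$'s multiply to give the $\kappa^{-2}$ in \eqref{SpecialDataFunctGeom}. So your guess that $\kappa^{-2}$ comes from a ``second-order cancellation'' or ``integrating by parts twice'' is off: in Lemma~\ref{rnm} a \emph{single} integration by parts against the primitive of $f^\ell$ already yields the tile bound with one factor $1/\kappa$; the second factor is the price of the tile-to-disk transfer. Your direct approach on disks could in principle be pushed through, but you would then have to handle the curvature of $\partial B_\varepsilon(x)$ against the radial oscillation, and the annulus crossings $r=\ell2^{-N}$, by hand; the paper's tiling sidesteps both issues cleanly.

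\textbf{Functional scale.} Here your plan is genuinely different from the paper's and is essentially the Fourier-in-$\theta$ / mode-by-mode Poisson approach of \cite{Zill1,Zill2} that the paper explicitly sets out to avoid. The paper instead stays in physical space: by Lemma~\ref{rnm} with the choice $M=\lfloor\frac12\log_2(C^{-1}t)\rfloor$ (so $t\ge C2^{2M}$) one gets $|\fint_Q\rho(t,\cdot)|\le2^{-M}\|\rho\|_{L^\infty}$ for all $Q\in\mathcal Q_M$, and then Lemma~\ref{mpc2} (duality plus a Poincar\'e inequality on each tile) converts uniform smallness of tile averages into $\|\rho(t,\cdot)\|_{\dot H^{-1}}\lesssim\|\rho\|_{L^\infty}2^{-M}\lesssim\|\rho\|_{L^\infty}/\sqrt t$. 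No spectral analysis, no Poisson equation. Your Fourier sketch, by contrast, is quite loose: the ``balancing'' between low and high $|k|$ is not a correct derivation of $t^{-1/2}$ (for a single mode $k$ the $\dot H^{-1}$ decay is like $1/(|k|t)$, and summing $\sum_k|c_k^\ell|^2/(k t)^2$ with merely $\ell^2$ control on $(c_k^\ell)$ does not produce $t^{-1/2}$ in the way you describe), and on the disk the mode-by-mode Poisson solve with the piecewise-in-$r$ structure and boundary conditions needs considerably more care than you indicate. The paper's route buys you a proof that is insensitive to the angular regularity of the $f^\ell$ and that generalizes beyond settings where Fourier analysis is convenient.
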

This proposition gives a quantitative rate of decay for both the geometric and the functional mixing scales in the case of initial data of the particular form~\eqref{InitialDatumForm}. Notice that the rate does not depend on the integer $N$ involved in the expression~\eqref{InitialDatumForm}.
        For the proof of the proposition we need some preliminary lemmas, that are also required to prove the main results in Section~\ref{Sec:Proofs}.

	\subsection{Auxiliary Lemmas} 
	
	Since the velocity field~\eqref{themixa} advects a traced point over a circle centered at zero, we will tile the unit disk with pieces of 
        annuli which behave like rectangles with bounded eccentricity. More precisely, this means that there is an absolute constant $c$ 
        such that, for any $M \in \mathbb{N}$ and 
        $Q \in \mathcal{Q}_{M}$ like below, we have that $Q$ is contained in a disk $B$ and $|Q| \geq c |B|$. Notice that 
        the area of the tiles $Q \in \mathcal{Q}_{M}$ is proportional
         to $2^{-2M}$ and their diameter is proportional to~$2^{-M}$; see Remark \ref{Rem:sizes}. 
	
	\label{billpu}
		\begin{definition}[Annular tiling]\label{Def:NewTil}
							Given any $M \in \mathbb{N} $, we tile $\overline{B_1} \setminus \{ 0 \}$ in the following way
							$$
							\overline{B_1} =\bigcup_{i=0}^{2^M-1}\bigcup_{j=0}^{i} Q_{ij}^M   \, , 
							$$  
							where $Q_{ij}^{M}$ are given, in polar coordinates, by
							\begin{equation} \nonumber
								Q_{ij}^M=\left\{ (r, \theta ) \in [0,1] \times [0,2\pi] \, : \, 
								r \in \left( i 2^{-M}, (i+1) 2^{-M} \right] , \ \theta \in 2 \pi \left( \frac{j}{i+1},\frac{j+1}{i+1}\right]  \right\} \, ,
							\end{equation}
							and we set
							$$
							\mathcal{Q}_{M} =\left\lbrace Q_{ij}^M, \textnormal{ where } i=0,\ldots,2^{M}-1 \textnormal{ and } 
							j=0,\ldots,i \right\rbrace  \, .
							$$
							\begin{figure}[h]
								\begin{center}
									\scalebox{0.35}{\input{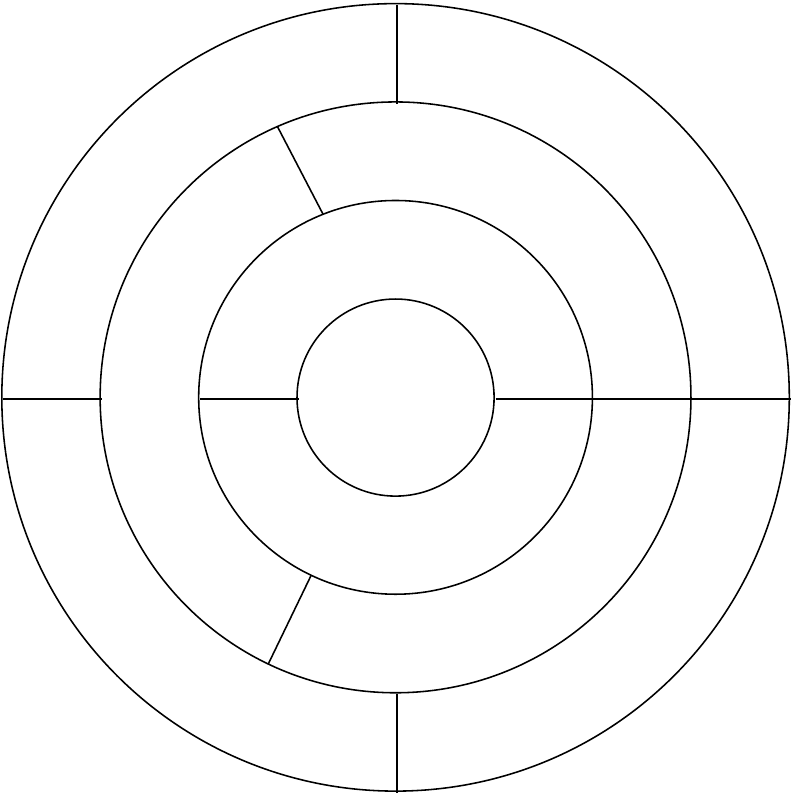tex_t}}
								\end{center}
								\caption{Example of an annular tiling for $M=2$}
								\label{koda}
							\end{figure}
						\end{definition}
\begin{remark}\label{Rem:sizes}
Note that there exist constants $C_1$ and $C_2>0$, such that 
\begin{equation}
C_12^{-2M}\leq\left|Q_{ij}^M\right|\leq C_22^{-2M} \hspace{0.5cm}\text{ and }\hspace{0.5cm}C_12^{-M}\leq \diam{Q_{ij}^M}\leq C_22^{-M}
\end{equation}
for all $M\in\mathbb{N}$.
\end{remark}
The following is a slightly different version of Lemma 3.5 in \cite{AlbCrippa}. We prove that it suffices to show that the tracer is well mixed on 
all the annular tiles of diameter~$2^{-M}$ in order to show that it is well mixed on any disk of comparable diameter. 

	\begin{lemma}\label{mpc}
		Let $\rho$ be a bounded function supported in $\overline{B_1}$. If 
		\begin{equation}
		\Big|\,\fint_{Q}\rho  \, \Big| \leq \frac{\kappa}{2} \| \rho  \|_{L^{\infty}}, \quad \mbox{for all} \quad Q \in \mathcal{Q}_{M} \, ,
		\end{equation}
		then there is an absolute constant $C >0$ so that
		\begin{equation}\nonumber
		\mathcal{G}(\rho)\leq C  \frac{2^{-M}}{\kappa}  \, .   
	         \end{equation}
        \end{lemma}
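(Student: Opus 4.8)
\textbf{Proof plan for Lemma~\ref{mpc}.}
The plan is to reduce the covering estimate over arbitrary disks $B_\varepsilon(x)$ to the hypothesis, which controls averages only over the annular tiles $Q\in\mathcal Q_M$. First I would fix $x\in\mathbb R^2$ and take $\varepsilon$ to be a suitable multiple of $2^{-M}$, say $\varepsilon = A\,2^{-M}/\kappa$ for an absolute constant $A$ to be chosen; the goal is to show that with this choice $\bigl|\fint_{B_\varepsilon(x)}\rho\bigr|\le\kappa\|\rho\|_{L^\infty}$, which gives $\mathcal G(\rho)\le\varepsilon = C\,2^{-M}/\kappa$. If $B_\varepsilon(x)$ does not meet $\overline{B_1}$ the average is zero and there is nothing to prove, so I may assume $B_\varepsilon(x)\cap\overline{B_1}\neq\emptyset$.

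The core step is a counting argument. Using Remark~\ref{Rem:sizes}, each tile $Q\in\mathcal Q_M$ has diameter comparable to $2^{-M}$ and area comparable to $2^{-2M}$, and the tiles have bounded eccentricity (each sits inside a disk of comparable area). I would split the tiles meeting $B_\varepsilon(x)$ into two groups: those entirely contained in $B_\varepsilon(x)$, and the "boundary" tiles that meet both $B_\varepsilon(x)$ and its complement. On the interior tiles the hypothesis applies directly, contributing at most $\tfrac{\kappa}{2}\|\rho\|_{L^\infty}$ per unit area to $\bigl|\int_{B_\varepsilon(x)}\rho\bigr|$, hence at most $\tfrac{\kappa}{2}\|\rho\|_{L^\infty}|B_\varepsilon(x)|$ in total. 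On the boundary tiles I would simply bound $\rho$ by $\|\rho\|_{L^\infty}$ pointwise; the key geometric fact is that the union of the boundary tiles is contained in an annular neighbourhood of $\partial B_\varepsilon(x)$ of width comparable to $2^{-M}$, so its total area is $O(\varepsilon\,2^{-M})$. Since $\varepsilon$ is a large multiple of $2^{-M}/\kappa$, the ratio $|\{\text{boundary tiles}\}|/|B_\varepsilon(x)|$ is $O(2^{-M}/\varepsilon) = O(\kappa/A)$, which can be made $\le\kappa/2$ by taking $A$ absolute and large enough. Adding the two contributions yields $\bigl|\fint_{B_\varepsilon(x)}\rho\bigr|\le\tfrac{\kappa}{2}\|\rho\|_{L^\infty}+\tfrac{\kappa}{2}\|\rho\|_{L^\infty}=\kappa\|\rho\|_{L^\infty}$, as required.

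The main obstacle I anticipate is the geometric bookkeeping near the center of the disk, where the tiles $Q_{ij}^M$ degenerate: for small $i$ the angular width $2\pi/(i+1)$ is large, and one must check that the "bounded eccentricity" and "area $\asymp 2^{-2M}$, diameter $\asymp 2^{-M}$" statements of Remark~\ref{Rem:sizes} really do hold uniformly (in particular for the innermost tile, which is a full disk of radius $2^{-M}$). A clean way around this is to observe that the relevant estimates are scale-invariant in $2^{-M}$ and that the number of tiles within distance $2^{-M}$ of any fixed circle of radius $\varepsilon$ is $O(\varepsilon\,2^{M})$ regardless of where that circle sits, because each such tile has area $\gtrsim 2^{-2M}$ and they are pairwise disjoint. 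This turns the whole argument into the packing estimate "disjoint sets of area $\gtrsim 2^{-2M}$ contained in a region of area $O(\varepsilon\,2^{-M})$ number at most $O(\varepsilon\,2^{M})$", which is robust and avoids any delicate case analysis at the origin. The remaining work is then purely the choice of the absolute constant $A$ and collecting the two bounds, which is routine.
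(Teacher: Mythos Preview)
Your proposal is correct and follows essentially the same approach as the paper: split the tiles meeting $B_\varepsilon(x)$ into interior and boundary tiles, use the hypothesis on the former, bound the latter by the area of an annular neighbourhood of $\partial B_\varepsilon(x)$ of width $\simeq 2^{-M}$, and choose $\varepsilon \simeq 2^{-M}/\kappa$ to balance. The paper's proof is the same one-line chain of inequalities, simply invoking Remark~\ref{Rem:sizes} for the uniform diameter bound rather than dwelling on the degeneracy near the origin that you (correctly but unnecessarily) flag.
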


	\begin{proof}
		We have
		\begin{equation}\nonumber
		\begin{split}
		\Big|\,\fint_{B_{\varepsilon}(x)}\!\!\! \rho \, \Big|&\! \leq \! \frac{1}{\pi\varepsilon^2}
		 \sum_{ \substack{ Q \in \mathcal{Q}_{M} \\ Q \subset B_{\varepsilon}(x)} } \! \Big|\,\int_{Q}\!\!\! \rho \, \Big|
		+  \frac{1}{\pi\varepsilon^2}  \!\!\!\!\!  
		\sum_{\substack{ Q \in \mathcal{Q}_{M} \\ Q \cap \partial B_{\varepsilon}(x) \neq \emptyset}}\int_{Q}\!\! \left| \rho \right|   \\
		& \leq \frac{\| \rho \|_{L^{\infty}}}{\pi\varepsilon^2} \left(  \frac{\kappa}{2}|B_{\varepsilon}(x)|+ 
		\big| B_{\varepsilon + c 2^{-M}}(x)\setminus B_{\varepsilon - c 2^{-M}}(x)   \big|  \right)   \\
		&\leq \frac{\kappa}{2} \| \rho \|_{L^{\infty}} +  \frac{4 c 2^{-M}}{\varepsilon} 
		\| \rho \|_{L^{\infty}} \leq \kappa \|  \rho  \|_{L^{\infty}} \, ,
		\end{split}
		\end{equation}
                 as long as $\varepsilon \geq  8 c 2^{-M} / \kappa $. Here $c$ is taken sufficiently large so that 
                 $\diam Q \leq c 2^{-M}$; recall Remark \ref{Rem:sizes}. 
	 \end{proof}
         A similar lemma holds also for the functional mixing scale. If the tracer is well mixed on any 
         tile $Q \in \mathcal{Q}_{M}$, then its $\dot{H}^{-1}$
         norm is small. 
	\begin{lemma}
		\label{mpc2} 
	Let $\rho$ be bounded, mean-free function supported in $\overline{B_1}$. If
		\begin{equation}\label{Ass+Poinc}
		\left|\fint_Q \rho \, \right| \leq 2 \|\rho\|_{L^{\infty}}2^{-M}, \quad \mbox{for all} \quad Q\in \mathcal{Q}_{M} \, ,
		\end{equation}
		there exists an absolute constant $C$ such that
		\begin{equation}\label{WCTPI}
		\|\rho \|_{\dot{H}^{-1}}\leq C\|\rho\|_{L^{\infty}}2^{-M} \, .
		\end{equation}
	\end{lemma}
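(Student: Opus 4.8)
\textbf{Proof proposal for Lemma~\ref{mpc2}.}

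The plan is to use the standard duality characterization of the $\dot{H}^{-1}$ norm together with a tile-by-tile Poincar\'e-type estimate. Recall that, for a mean-free $\rho$,
\begin{equation}\nonumber
\|\rho\|_{\dot{H}^{-1}} = \sup\left\{ \int_{B_1} \rho\, \varphi\, : \, \varphi \in C^\infty_c(B_1),\ \|\nabla\varphi\|_{L^2} \leq 1 \right\}\, .
\end{equation}
So I would fix such a test function $\varphi$ and estimate $\int_{B_1}\rho\varphi$ by decomposing the integral over the tiles of $\mathcal{Q}_M$. On each tile $Q$ I would subtract the average $\varphi_Q := \fint_Q \varphi$ and write
\begin{equation}\nonumber
\int_Q \rho\,\varphi = \int_Q \rho\,(\varphi - \varphi_Q) + \varphi_Q \int_Q \rho\, .
\end{equation}
For the first term I use $|\int_Q \rho(\varphi-\varphi_Q)| \leq \|\rho\|_{L^\infty} \int_Q |\varphi - \varphi_Q|$, then apply the Poincar\'e inequality on $Q$; since the tiles have bounded eccentricity (they sit inside a disk $B$ with $|Q| \geq c|B|$, by Remark~\ref{Rem:sizes} and the discussion preceding Definition~\ref{Def:NewTil}), the Poincar\'e constant is controlled by $C\,\diam(Q) \leq C\,2^{-M}$, uniformly in the tile. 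This gives $\int_Q|\varphi-\varphi_Q| \leq C\,2^{-M}\|\nabla\varphi\|_{L^1(Q)} \leq C\,2^{-M}|Q|^{1/2}\|\nabla\varphi\|_{L^2(Q)}$ by Cauchy--Schwarz. For the second term I use the hypothesis \eqref{Ass+Poinc}, namely $|\int_Q \rho| = |Q|\,|\fint_Q\rho| \leq 2\|\rho\|_{L^\infty}2^{-M}|Q|$, and I need to control $|\varphi_Q|$; here I would bound $|\varphi_Q| \leq \fint_Q|\varphi|$ and absorb it using that $\varphi \in L^2(B_1)$ with $\|\varphi\|_{L^2(B_1)} \leq C\|\nabla\varphi\|_{L^2(B_1)} \leq C$ by the Poincar\'e inequality on $B_1$ (valid since $\varphi$ vanishes on $\partial B_1$, or can be taken mean-free).

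Summing over all $Q \in \mathcal{Q}_M$ and using Cauchy--Schwarz in the tile index (the tiles are disjoint and cover $B_1$, and $\sum_Q |Q| = |B_1| = \pi$), the first contribution is bounded by
\begin{equation}\nonumber
C\,2^{-M}\|\rho\|_{L^\infty} \sum_Q |Q|^{1/2}\|\nabla\varphi\|_{L^2(Q)} \leq C\,2^{-M}\|\rho\|_{L^\infty}\Big(\sum_Q |Q|\Big)^{1/2}\Big(\sum_Q \|\nabla\varphi\|_{L^2(Q)}^2\Big)^{1/2} \leq C\,2^{-M}\|\rho\|_{L^\infty}\, ,
\end{equation}
and the second contribution is bounded similarly by $C\,2^{-M}\|\rho\|_{L^\infty}\sum_Q |Q|^{1/2}\|\varphi\|_{L^2(Q)} \leq C\,2^{-M}\|\rho\|_{L^\infty}\|\varphi\|_{L^2(B_1)} \leq C\,2^{-M}\|\rho\|_{L^\infty}$. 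Taking the supremum over admissible $\varphi$ yields \eqref{WCTPI}.

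The main obstacle is the uniform Poincar\'e inequality on the tiles near the origin: the tiles $Q_{0j}^M$ (and more generally those with small $i$) degenerate in shape, so one must be careful that the bounded-eccentricity property quantified in Remark~\ref{Rem:sizes} genuinely gives a Poincar\'e constant of order $2^{-M}$ there as well. This is handled by the observation recorded before Definition~\ref{Def:NewTil}: each $Q$ sits inside a disk $B$ of comparable measure, so one extends $\varphi$ (or rather compares $\varphi_Q$ with $\varphi_B$ and uses Poincar\'e on the disk $B$, paying only a bounded factor from $|Q|/|B|$). A minor secondary point is the correct treatment of the boundary behaviour of $\varphi$ so that the Poincar\'e inequality on $B_1$ applies with an absolute constant; this is standard since the supremum defining $\|\rho\|_{\dot H^{-1}}$ may be taken over $\varphi$ with zero trace.
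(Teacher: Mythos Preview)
Your proposal is correct and takes essentially the same approach as the paper: duality, decomposition over the tiles in $\mathcal{Q}_M$, subtracting the tile average, Poincar\'e on each tile for the oscillation term, and the hypothesis \eqref{Ass+Poinc} for the mean term. The paper differs only in presentation: it first passes from $\dot H^{-1}$ to the equivalent $H^{-1}$ norm (using that $\rho$ is mean-free) and then sums in $L^1$ rather than using Cauchy--Schwarz tilewise, and it records the uniform tile Poincar\'e inequality as a separate lemma (Lemma~\ref{poinca}), proved by a direct rescaling argument in polar coordinates---which in particular handles the near-origin tiles you correctly flag as the main obstacle.
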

	For the proof of Lemma \ref{mpc2} we need the following Poincar\'e estimate:
	\begin{lemma}
	[Poincar\'e inequality on tiling]
	\label{poinca}
	There exists an absolute constant $C$ such that for all $\xi \in W^{1,1}$ we have that
		\begin{equation}\label{UnitCubePoinc}
		\| \xi- \xi_Q\|_{L^{1}(Q)}\leq C2^{-M}\|\nabla \xi\|_{L^{1}(Q)}
		\end{equation}
		for any $Q\in \mathcal{Q}_M $, where $\xi_Q :=\fint_Q \xi$.
	\end{lemma}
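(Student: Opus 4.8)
The plan is to establish the Poincaré inequality on each tile $Q \in \mathcal{Q}_M$ by a change of variables that reduces the problem to a fixed reference domain, together with the fact (from Remark~\ref{Rem:sizes}) that the tiles $Q_{ij}^M$ have uniformly bounded eccentricity, i.e.\ are comparable to disks of radius $\sim 2^{-M}$. More precisely, since $Q=Q_{ij}^M$ is contained in a disk $B$ of radius $c\,2^{-M}$ and contains a fixed fraction of its volume, the classical Poincaré--Wirtinger inequality on a ball, applied after a rescaling by the factor $2^{-M}$, would give $\|\xi-\xi_B\|_{L^1(B)}\le C\,2^{-M}\|\nabla\xi\|_{L^1(B)}$; but the difficulty is that we need the inequality on $Q$ itself (not on the enclosing disk), with the average taken over $Q$, and $Q$ is an annular sector rather than a ball, so one cannot directly invoke the standard ball inequality.

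The step I expect to be the main obstacle is proving a uniform (in $i,j,M$) Poincaré constant on the family of annular sectors. The clean way around it is to show that each $Q_{ij}^M$, after rescaling by $2^M$, is the bi-Lipschitz image—with Lipschitz constants of the map and its inverse bounded by an absolute constant independent of $i,j,M$—of one fixed reference set (for instance the unit square, or a fixed annular sector). Granting such a uniform bi-Lipschitz parametrization $\Phi\colon R\to 2^M Q_{ij}^M$, the Poincaré inequality on the convex reference set $R$ transfers to $2^M Q_{ij}^M$ with a uniformly controlled constant: one writes $\xi\circ\Phi$ on $R$, applies Poincaré--Wirtinger there, and changes variables back, with all Jacobian factors bounded above and below by absolute constants because $|Q_{ij}^M|\sim 2^{-2M}$ and $\operatorname{diam}Q_{ij}^M\sim 2^{-M}$. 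Finally, undoing the rescaling $x\mapsto 2^{-M}x$ produces exactly the factor $2^{-M}$ in \eqref{UnitCubePoinc}. One should be slightly careful near the center $i=0$, where the ``tile'' $Q_{0j}^M$ degenerates towards the origin, but it is still a sector of a small disk and the same bi-Lipschitz-to-reference argument applies (alternatively one treats the innermost disk $B_{2^{-M}}$ separately using the Poincaré inequality on a disk).

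Concretely, I would organize the proof as follows. First, fix $M$ and $Q=Q_{ij}^M$ and set $\widetilde Q := 2^M Q$, so $\operatorname{diam}\widetilde Q\le C_2$ and $|\widetilde Q|\ge C_1$. Second, exhibit the uniform bi-Lipschitz chart from a fixed reference domain onto $\widetilde Q$: in polar coordinates $Q_{ij}^M$ is the product of a radial interval of length $2^{-M}$ with an angular interval of length $2\pi/(i+1)$, and the map $(r,\theta)\mapsto(r\cos\theta,r\sin\theta)$ restricted to this product has differential with operator norm and inverse-norm bounded by absolute constants precisely because $r\sim (i+1)2^{-M}$ and the angular width is $\sim 1/(i+1)$, so the eccentricity stays bounded. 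Third, apply the standard Poincaré--Wirtinger inequality on the convex reference domain to $\xi\circ\Phi$, then change variables back to $\widetilde Q$ using that the Jacobian of $\Phi$ is comparable to an absolute constant, obtaining $\|\xi-\xi_{\widetilde Q}\|_{L^1(\widetilde Q)}\le C\|\nabla\xi\|_{L^1(\widetilde Q)}$ with $C$ absolute. Fourth, rescale: for $\xi$ defined on $Q$, apply the previous bound to $\widetilde\xi(y):=\xi(2^{-M}y)$ on $\widetilde Q$ and track the powers of $2^{-M}$ coming from $dx = 2^{-2M}dy$ and $\nabla_x = 2^M\nabla_y$; this yields precisely \eqref{UnitCubePoinc}. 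A density argument extends the inequality from smooth $\xi$ to all $\xi\in W^{1,1}(Q)$, completing the proof.
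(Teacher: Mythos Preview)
Your proposal is correct and follows essentially the same route as the paper: both arguments parametrize the annular tile $Q_{ij}^M$ via polar coordinates, viewing it as (the image of) a rectangle of sides $2^{-M}\times \tfrac{2\pi}{i+1}$, apply a Poincar\'e inequality on that rectangle, and use that $r\simeq (i+1)2^{-M}$ on the tile to control the Jacobian and the gradient; the innermost tile $Q_{00}^M=B_{2^{-M}}$ is handled separately by the disk Poincar\'e inequality in both approaches.

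The only presentational difference is that the paper works directly with the \emph{anisotropic} Poincar\'e inequality on a rectangle of sides $\lambda_1\times\lambda_2$,
\[
\|\xi-\xi_R\|_{L^1(R)}\le C\,\|(\lambda_1\partial_1+\lambda_2\partial_2)\xi\|_{L^1(R)},
\]
applied in the $(r,\theta)$ variables, and then multiplies by $i2^{-M}\simeq r$ to pass from $dr\,d\theta$ to the area element $r\,dr\,d\theta$; your version packages the same estimate as a uniform bi-Lipschitz change of variables after a preliminary rescaling by $2^M$. Both yield the constant $C2^{-M}$ with $C$ absolute.
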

	\begin{proof}
	First of all, since the tiles $Q_{00}^{M}$ are just disks of radius $2^{-M}$ centered at zero, \eqref{UnitCubePoinc} for $Q=Q_{00}$ is simply a rescaled version 
		of the Poincar\'e inequality on the unit disk. To handle the remaining tiles,  
		we start by the Poincar\'e inequality over a rectangle $R$ of sides $\lambda_1 \times \lambda_2$, that
		is
		\begin{equation}\label{RectanglePoincare}
		\| \xi -\xi_Q\|_{L^{1}(R)} \leq C \| (\lambda_1 \partial_{1} + \lambda_2 \partial_2) \xi \|_{L^{1}(R)}  \, ,
		\end{equation}
		which one gets by translating and rescaling the 
		Poincar\'e inequality on the unit cube. Thus, recalling 
		that $Q_{ij}^M$ has sides $2^{-M} \times \frac{2\pi}{i+1}$ (when we look at it as a rectangle in polar coordinates), we have  
		\begin{equation}
		\int_{Q_{ij}^{M}} |\xi -\xi_Q|(r,\theta) dr d\theta  
		\leq C \int_{Q_{ij}^{M}} \left| \left(2^{-M} \partial_{r} + \frac{2\pi}{i+1} \partial_\theta \right) \xi \, \right| (r,\theta) dr d\theta .
		\end{equation}
		We multiply this inequality times~$i2^{-M}$ and,
		noting that $r \simeq i2^{-M}$ when $r \in Q_{ij}^{M}$, we arrive at
		\begin{equation}
		\int_{Q_{ij}^{M}} |\xi -\xi_Q|(r,\theta) \, r \, dr d\theta  
		\lesssim C 2^{-M} \int_{Q_{ij}^{M}} \left| \left( \partial_{r} + \frac{ \partial_\theta}{r} \right) \xi \, \right| (r,\theta) \, r \, dr d\theta \, ,
		\end{equation}
		that, once we recall $\nabla = \partial_{r} + \frac{1}{r} \partial_{\theta}$, completes the proof of Lemma \ref{poinca}.
	\end{proof}
	\begin{proof}[Proof of Lemma \ref{mpc2}]	
		We work with the $\dot{H}^{-1}$ norm defined by duality as 
			\begin{equation}
			\label{tinaf}
		\|\rho\|_{\dot{H}^{-1}}=\sup\left\lbrace \int_{B_1}\rho(x)\xi(x)\,dx \,:\, \|\nabla\xi \|_{L^2}\leq 1 \right\rbrace  \, .
		\end{equation}
		First we note that there exists a constant $C>0$ such that for any mean-free function $\rho$ we have that
		\begin{equation}
		\label{pie}
		C\|\rho\|_{\dot{H}^{-1}}\leq	\|\rho\|_{H^{-1}}\leq \|\rho\|_{\dot{H}^{-1}}\,,
		\end{equation}
		where
		\begin{equation}
		\label{feyy}
		\|\rho\|_{H^{-1}}=\sup\left\lbrace \int_{B_1}\rho(x)\xi(x)\,dx \,:\, \|\xi \|_{H^1}\leq 1 \right\rbrace  \, .
        		\end{equation}
        The second inequality in \eqref{pie} is immediate. As for the first inequality, let $\xi$ such that $\|\nabla\xi \|_{L^2}\leq 1$. 
        We define $\tilde{\xi}=\xi-\xi_{B_1}$ and note that since $\rho$ is mean-free, we have that
        \begin{equation}
        \int_{B_1} \rho(x)\tilde{\xi}(x)\,dx=\int_{B_1}  \rho(x)\xi(x)\,dx \,.
        \end{equation}
        On the other hand, by the Poincaré inequality we have that
        \begin{equation}
        \|\tilde{\xi} \|_{L^2}\leq C\|\nabla\xi \|_{L^2}\hspace{0.5cm}\text{and}\hspace{0.5cm}\|\nabla\tilde{\xi} \|_{L^2}=\|\nabla\xi \|_{L^2}\,.
        \end{equation}
        By the definitions \eqref{tinaf} and \eqref{feyy}, this concludes the proof of \eqref{pie}. 
        
        In order to show \eqref{WCTPI}, by \eqref{pie} it is sufficient to show that
        	\begin{equation}
        	\label{altfacts}
        \|\rho \|_{H^{-1}}\leq C\|\rho\|_{L^{\infty}}2^{-M}\,.
        \end{equation}
        Let $\xi$ such that $\| \xi\|_{H^1}\leq 1$. Then
		\begin{align}
		\label{yolin}
		\Bigg|\int_{B_1}\rho(x)  &   \xi(x)\, dx \Bigg|  \leq \sum_{Q\in\mathcal{Q}_{M}}\left|\int_{Q}\rho(x)\xi(x)\,dx \right|\\ \nonumber
		& \leq\sum_{Q\in\mathcal{Q}_{M}}\int_{Q}\left|\rho(x)(\xi(x)-\xi_Q)\right|\,dx +\left|\int_{Q}\rho(x)\xi_Q\,dx\right|\\  \nonumber
		&\leq\|\rho\|_{L^{\infty}}\sum_{Q\in\mathcal{Q}_{M}}\|\xi-\xi_Q\|_{L^1(Q)}+\sum_{Q\in\mathcal{Q}_{M}}|\xi_Q|\left|\int_{Q}\rho(x)\,dx \right|\\  \nonumber
		&\leq C2^{-M}\|\rho\|_{L^{\infty}}\left( \sum_{Q\in\mathcal{Q}_{M}}\|\nabla \xi\|_{L^1(Q)}+
		 \sum_{Q\in\mathcal{Q}_{M}}\left| \int_Q\xi(y)\,dy\right|\right)\\ \nonumber
		&\leq C2^{-M}\|\rho\|_{L^{\infty}} \left( \|\nabla \xi\|_{L^1}+\|\xi\|_{L^1}\right)\\  \nonumber
	&\leq C2^{-M} \|\rho\|_{L^{\infty}}\|\xi \|_{H^1} \leq C\|\rho\|_{L^{\infty}}2^{-M}\, ,
		\end{align}
		where we used \eqref{UnitCubePoinc} and \eqref{Ass+Poinc} in the fourth inequality. This concludes the proof of \eqref{altfacts} and therefore of the lemma. 
	\end{proof}

The following is a key lemma that will be used, together with the subsequent one, in the proof of all the main results in the next section. Here we consider initial data   
of the form \eqref{InitialDatumForm}, namely piecewise constant along the radial direction and with zero circular mean, 
and we show that solutions are well mixed on any (small) annular tile, provided we wait a sufficiently large time. Here
we only consider tiles which are contained into the sets (annuli) where the data are radially piecewise constant. The case of large tiles, on which the data can also change their values once we move in the radial direction, will be analyzed in Lemma \ref{Wutang}.

\begin{lemma}
	\label{rnm}
	There exists an absolute constant $C >0$ such that the following holds. For any 
	$\rho_{0} \in L^{\infty}$ of the form~\eqref{InitialDatumForm} and $t \geq C \frac{ 2^{M}}{\kappa}$, we have 
	\begin{equation}
		\label{fpfand}
		\left|\fint_{Q} \rho(t, \cdot) \right|\leq \frac{\kappa}{4} \| \rho(t,\cdot)\|_{L^{\infty}}, 
		\quad \forall M > N, \quad \forall Q \in \mathcal{Q}_{M} \, . 
	\end{equation}
	If $t\geq C 2^{2M}$ we have
	\begin{equation}
		\label{fpfand2Old}
		\left|\fint_{Q}\rho(t,\cdot) \right|\leq 2^{-M}\|\rho(t,\cdot)\|_{L^{\infty}} , \quad \forall M > N, \quad \forall Q \in \mathcal{Q}_{M} \, . 
	\end{equation}
\end{lemma}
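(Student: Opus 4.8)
The plan is to combine the explicit form of the solution with the elementary fact that the primitive of a bounded periodic mean-zero function is again bounded and periodic. The velocity field \eqref{themixa} is tangent to the circles centred at the origin and rotates the circle of radius $r$ with angular speed $2\pi r$, so its flow is $(r,\theta)\mapsto(r,\theta-2\pi r t)$ and the solution of \eqref{Cauchyprob} is $\rho(t,r,\theta)=\rho_0(r,\theta+2\pi r t)$; in particular $\|\rho(t,\cdot)\|_{L^{\infty}}=\|\rho_0\|_{L^{\infty}}$. Fix $M>N$ and a tile $Q=Q_{ij}^{M}\in\mathcal{Q}_{M}$. By dyadic nesting the radial interval $(i2^{-M},(i+1)2^{-M}]$ lies inside one interval $(\ell 2^{-N},(\ell+1)2^{-N}]$, so on $Q$ the datum is $\rho_0=f^{\ell}(\theta)$ for a single index $\ell$ and hence $\rho(t,\cdot)|_{Q}=f^{\ell}(\theta+2\pi r t)$. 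When $i=0$ the tile $Q_{00}^{M}$ is a disk and integration in $\theta$ over a full period of the mean-zero $f^{\ell}$ already gives $\int_{Q_{00}^{M}}\rho(t,\cdot)=0$; so assume $i\ge 1$.

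First I would carry out the angular integration. Writing out $Q_{ij}^{M}$ in polar coordinates and substituting $\phi=\theta+2\pi r t$, for each fixed $r$ the inner integral equals $\widehat g(2\pi r t)$, where $\widehat g$ is the ``window average'' $\widehat g(s):=\int_{s+2\pi j/(i+1)}^{s+2\pi(j+1)/(i+1)}f^{\ell}(\phi)\,d\phi$. By \eqref{ZeroMeanAss} the function $\widehat g$ is $2\pi$-periodic with zero mean, and since its window has length $2\pi/(i+1)$ it obeys $\|\widehat g\|_{L^{\infty}}\le\frac{2\pi}{i+1}\|\rho_0\|_{L^{\infty}}$. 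Substituting next $s=2\pi r t$ in the radial integral and recalling $|Q|\simeq 2^{-2M}$ from Remark~\ref{Rem:sizes}, we arrive at
\[
\fint_{Q}\rho(t,\cdot)=\frac{1}{|Q|\,(2\pi t)^{2}}\int_{a}^{a+\Delta}\widehat g(s)\,s\,ds,\qquad a:=2\pi i\,2^{-M}t,\quad\Delta:=2\pi\,2^{-M}t.
\]

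The core estimate is the bound on this oscillatory integral. Let $G(s):=\int_{0}^{s}\widehat g$; since $\widehat g$ is periodic and mean-free, $G$ is $2\pi$-periodic, and $\|G-\overline G\|_{L^{\infty}}\lesssim\|\widehat g\|_{L^{\infty}}$ for $\overline G$ its average. Integrating by parts,
\[
\int_{a}^{a+\Delta}\widehat g(s)\,s\,ds=\big[(G(s)-\overline G)\,s\big]_{a}^{a+\Delta}-\int_{a}^{a+\Delta}\big(G(s)-\overline G\big)\,ds,
\]
where the boundary term is $\lesssim\|\widehat g\|_{L^{\infty}}(a+\Delta)$ and the last integral is $\lesssim\|\widehat g\|_{L^{\infty}}$ (it runs over finitely many full periods of the mean-zero function $G-\overline G$ plus a remainder of length $\le 2\pi$, since $\Delta\ge 2\pi$ once $t\ge 2^{M}$). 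Because $a+\Delta=2\pi(i+1)2^{-M}t\ge 1$ for $t\ge 2^{M}$, we obtain
\[
\Big|\int_{a}^{a+\Delta}\widehat g(s)\,s\,ds\Big|\lesssim\|\widehat g\|_{L^{\infty}}(i+1)2^{-M}t\lesssim\|\rho_0\|_{L^{\infty}}\,2^{-M}t,
\]
the decisive point being that the factor $(i+1)$ from the length of the integration interval cancels the $1/(i+1)$ in $\|\widehat g\|_{L^{\infty}}$, which is what makes the bound uniform in $i$ and $j$. Inserting this into the identity above and using $|Q|\gtrsim 2^{-2M}$ gives an absolute constant $C$ with
\[
\Big|\fint_{Q}\rho(t,\cdot)\Big|\le C\,\|\rho(t,\cdot)\|_{L^{\infty}}\,\frac{2^{M}}{t}\qquad\text{for all }t\ge 2^{M},\ M>N,\ Q\in\mathcal{Q}_{M}.
\]

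Finally, the two assertions follow by choosing the threshold on $t$: if $t\ge\frac{4C}{\kappa}2^{M}$ then the right-hand side is $\le\frac{\kappa}{4}\|\rho(t,\cdot)\|_{L^{\infty}}$, proving \eqref{fpfand}; if $t\ge C2^{2M}$ it is $\le 2^{-M}\|\rho(t,\cdot)\|_{L^{\infty}}$, proving \eqref{fpfand2Old} (in both cases $\kappa<1$ makes the auxiliary condition $t\ge 2^{M}$ automatic). I expect the only genuinely delicate step to be the estimate of the oscillatory integral and the bookkeeping of the $(i+1)$ factors; the reduction to that integral and the concluding arithmetic are routine once the explicit flow is in hand.
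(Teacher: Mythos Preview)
Your argument is correct and rests on the same core mechanism as the paper's proof---a change of variables in the radial direction followed by an integration by parts that exploits the boundedness of the primitive of a periodic mean-zero function---but the organisation is different and somewhat cleaner. The paper integrates in $r$ first at fixed $\theta$ (substituting $y=\theta-2\pi t r$), then splits the resulting integral into a ``partial periods'' piece $I_1$ estimated trivially and a ``full periods'' sum $I_2$ whose generic term is handled by integration by parts; only after that does it integrate in $\theta$. You instead carry out the $\theta$-integral first, packaging it into the single periodic mean-zero function $\widehat g$ with $\|\widehat g\|_{L^\infty}\le\frac{2\pi}{i+1}\|\rho_0\|_{L^\infty}$, and then a single integration by parts in $s$ handles everything at once, with the $I_1/I_2$ split replaced by the observation that $\int_a^{a+\Delta}(G-\overline G)$ reduces to a remainder of length $<2\pi$. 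This buys you a shorter argument and makes the crucial cancellation of the $(i+1)$ factors explicit in one line; the paper's version, on the other hand, keeps the two sources of error (boundary remainder versus oscillatory bulk) visibly separated, which is pedagogically useful and is reused verbatim in the companion Lemma~\ref{Wutang}. Both routes yield the same final inequality $\big|\fint_Q\rho(t,\cdot)\big|\lesssim\|\rho_0\|_{L^\infty}\,2^{M}/t$ for $t\ge 2^{M}$, from which \eqref{fpfand} and \eqref{fpfand2Old} follow as you indicate.
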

\begin{proof}
	We first prove \eqref{fpfand}. Since $Q_{00}^M=B_{2^{-M}}$, and the initial datum has zero average on any circle, a property which is preserved by the flow, we immediately have 
	\begin{equation}
		\int_{Q_{00}^M} \rho(t, \cdot)=0\, .
	\end{equation}
	Hence it is sufficient to consider $i\geq 1$. Since we are considering $\rho_{0}$ of the form~\eqref{InitialDatumForm}, the restriction of the solution~$\rho(t,\cdot)$ to  
	the tiles $Q_{ij}^{M}$ is
	$$
	\rho(t, r, \theta)\Big|_{Q_{ij}^{M}} = f^{\ell}(\theta- 2\pi t r) \, ,    
	$$ 
	where $\ell$ is the only integer such that $( i 2^{-M}, (i+1)2^{-M}] \subset ( \ell 2^{-N}, (\ell+1)2^{-N} ]$.
	We set 
	\begin{equation}\label{def:RT}
	r_{i}=i2^{-M}, \quad i=1, \ldots, 2^{M}-1 
	\qquad \text{ and } \qquad
	\theta_{j}=\frac{j}{i+1}2\pi, \quad j=0, \ldots, i \, .  
	\end{equation} 
	Let us compute
	\begin{equation}\nonumber
		\int_{Q_{ij}^{M}} \rho(t, \cdot)= \int_{\theta_{j}}^{\theta_{j+1}} \int_{r_{i}}^{r_{i+1}}  
		\rho(t, r, \theta) \, r  \, d r d\theta = \int_{\theta_{j}}^{\theta_{j+1}}  \int_{r_{i}}^{r_{i+1}} f^{\ell}(\theta-2\pi t r) \, r \, d r d\theta
		\, . 
	\end{equation}
	For any fixed $\theta$, we change variables $y= \theta - 2 \pi t r$. By $r dr = - (2\pi t)^{-2}(\theta-y)dy$, 
	we get
	\begin{equation}\nonumber
		\int_{\theta_{j}}^{\theta_{j+1}}  \int_{r_{i}}^{r_{i+1}} f^{\ell}(\theta-2\pi t r) \, r \ d r d\theta = \frac{1}{(2\pi t)^{2}} \int_{\theta_{j}}^{\theta_{j+1}} I_{1}(\theta) \ d \theta
		+
		\frac{1}{(2\pi t)^{2}} \int_{\theta_{j}}^{\theta_{j+1}} I_{2}(\theta) \ d \theta \,,
	\end{equation}
	where 
	\begin{equation}\label{kata}
		I_{1}(\theta) := -  \int_{\theta - 2\pi t r_{i+1}}^{\lceil \theta - 2\pi t r_{i+1} \rceil_{2\pi} }f^{\ell}(y) (\theta-y) \ d y 
		- \int_{\lfloor \theta - 2\pi t r_{i} \rfloor_{2\pi}}^{\theta - 2\pi t r_{i} }f^{\ell}(y) (\theta-y) \ d y \, ,
	\end{equation}
	\begin{equation}\label{yolinski}
		I_{2}(\theta) := - \sum_{k = \frac{1}{2\pi} \lceil \theta - 2\pi t r_{i+1} \rceil_{2\pi}}^{\frac{1}{2\pi} \lfloor \theta - 2\pi t r_{i} \rfloor_{2\pi}}  
		\int_{2\pi k}^{2\pi (k+1)} f^{\ell}(y)(\theta-y) \ d y \, ,
	\end{equation}
	and $\lceil a \rceil_{2\pi}$ ($\lfloor a \rfloor_{2\pi}$) is the smallest (largest) multiple 
	of $2\pi$ which is larger (smaller) than~$a$. 
	
	The integral of $I_1(\theta)$ will be small because we 
	integrate over a small set, while the integral of $I_2(\theta)$ will be small 
	due to cancellation effects arising in the integral. 
	Indeed
	$$| I_1(\theta) | \lesssim (1 + t r_{i+1}) \| f^{\ell} \|_{L^{\infty}} \leq (1 + t r_{i+1}) \| \rho_{0} \|_{L^{\infty}} \, ,$$  
	so that  
	\begin{equation}\label{turnip}
	\begin{split}
		\frac{1}{(2\pi t)^{2}} \int_{\theta_{j}}^{\theta_{j+1}} |I_{1}(\theta)|\,  d \theta 
		&\lesssim  \frac{1+t r_{i+1} }{t^{2}} (\theta_{j+1}-\theta_{j}) \| \rho_{0} \|_{L^{\infty}} \\	
		&\lesssim \left(\frac{1}{t^2}+\frac{2^{-M}}{t}\right)\| \rho(t,\cdot) \|_{L^{\infty}}  \, ,
		\end{split}
	\end{equation}
	where we have used $\theta_{j+1} - \theta_{j} = \frac{2\pi}{i+1}$. Now, recalling that
	$|Q_{ij}^{M}| \simeq 2^{-2M}$, from \eqref{turnip} we see that
	\begin{equation}\label{BTAT}
		\frac{1}{(2\pi t)^{2}} \int_{\theta_{j}}^{\theta_{j+1}} |I_{1}(\theta)|\,  d \theta	< \frac{\kappa}{8} \| \rho(t,\cdot)\|_{L^{\infty}} |Q_{ij}^{M}| 
	\end{equation}
	as long as $t \geq C \frac{2^{M}}{\kappa}$, for some large absolute constant $C$.  
	In order to estimate the contribution of $I_2(\theta)$, we notice that, since $f^{\ell}$ is $2\pi$-periodic with zero mean,
	the general term of the sum~\eqref{yolinski} reduces to
	\begin{equation}
		\int_{2\pi k}^{2\pi (k+1)} f^{\ell}(y)(\theta - y) \ d y  =  - \int_{2\pi k}^{2\pi (k+1)} f^{\ell}(y)y \ d y \, ,
	\end{equation}
	and, once we
	set $F^{\ell}(y) := \int_{0}^{y} f^{\ell} (z) dz$,
	\begin{align}\nonumber
		- \int_{2\pi k}^{2\pi (k+1)} f^{\ell}(y)y \ d y \, & = -  \Big[ F^{\ell}(y) y \Big]_{2\pi k}^{2\pi (k+1)} + \int_{2\pi k}^{2\pi (k+1)} F^{\ell}(y) \ dy
		\\  \nonumber
		&= \int_{0}^{2\pi} F^{\ell}(y) \ dy  \lesssim \| f^{\ell} \|_{L^{\infty}} \leq \| \rho_{0} \|_{L^{\infty}}    \, ,
	\end{align}
	where we have used that $F^{\ell}(0) = 0$ and that $F^{\ell}$ is $2\pi$-periodic, which follows by the fact that $f^{\ell}$ has zero mean.
	Thus, plugging 
	this into~\eqref{yolinski} and noting that there are less than~$t (r_{i+1}-r_{i}) =  t2^{-M}$ terms in the sum over~$k$,  we arrive at
	\begin{equation}
	\begin{split}
		\label{turnup}
		\frac{1}{(2\pi t)^{2}}   \int_{\theta_{j}}^{\theta_{j+1}} | I_{2}(\theta) | \ d\theta 
		& \lesssim \frac{\theta_{j+1}-\theta_{j}}{t} 2^{-M} \| \rho_{0} \|_{L^{\infty}}  \\ 
		& \simeq \frac{2^{-M}}{(i+1)t}  \| \rho(t,\cdot) \|_{L^{\infty}}
		\leq\frac{2^{-M}}{t}  \| \rho(t,\cdot) \|_{L^{\infty}} \, ,
		\end{split}
	\end{equation}
	and again, with the same computation used to deduce  \eqref{BTAT} by \eqref{turnip}, we have that
	\begin{equation}\label{BTAT2}
		\frac{1}{(2\pi t)^{2}} \int_{\theta_{j}}^{\theta_{j+1}} |I_{2}(\theta)|\,  d \theta	< \frac{\kappa}{8} \| \rho(t,\cdot) \|_{L^{\infty}} |Q_{ij}^{M}|  \, ,
	\end{equation}
	provided $t \geq C \frac{2^M}{\kappa}$, for some large constant $C$. This concludes the proof of \eqref{fpfand}. 
	Estimate \eqref{fpfand2Old} can be proved in an analogous way. Indeed, looking at \eqref{turnip} and~\eqref{turnup}, it is clear that we only 
	need to restrict to $t \geq C 2^{2M}$.	
\end{proof}


In the next lemma we show that solutions corresponding to initial data of the form \eqref{InitialDatumForm} are well mixed on (large) tiles which contain the sets (annuli) 
where the data are radially piecewise constant. This is a complement of Lemma \ref{rnm}. Notice that the estimate 
\eqref{fpfand2} below is more efficient than its counterpart \eqref{fpfand2Old}, since it even holds for smaller times.

\begin{lemma}
\label{Wutang}
There exists an absolute constant~$C$ such that the following holds.
Let $\rho_{0} \in L^{\infty}$ of the form~\eqref{InitialDatumForm}. For all $M\leq N$ we have that 
\begin{equation}
\label{fpfand2PREQ}
\left| \fint_{Q} \rho(t,\cdot) \right|\leq \frac{\kappa}{4} \|\rho(t, \cdot)\|_{L^{\infty}} 
\end{equation}
for all $Q\in\mathcal{Q}_M$ and all $t\geq C \frac{2^{N}}{\kappa}$. Similarly
\begin{equation}
\label{fpfand2}
\left|\fint_{Q}\rho(t,\cdot) \right|\leq 2^{-M}\|\rho(t, \cdot)\|_{L^{\infty}} 
\end{equation}
for all $Q\in\mathcal{Q}_M$ and all $t\geq C 2^{M+N}$.
\end{lemma}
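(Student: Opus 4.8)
The plan is to reduce the estimate on a large tile $Q=Q_{ij}^M\in\mathcal{Q}_M$ with $M\le N$ to the computation already carried out in the proof of Lemma~\ref{rnm}, applied separately on the thin sub-annuli on which the datum is radially constant, and then to sum the resulting contributions. The key observation is that, since $M\le N$, the radial interval $(i2^{-M},(i+1)2^{-M}]$ is the disjoint union of the $2^{N-M}$ dyadic intervals $(\ell 2^{-N},(\ell+1)2^{-N}]$ with $\ell=i2^{N-M},\dots,(i+1)2^{N-M}-1$, and on the corresponding sub-tile $Q'_\ell$ (which in polar coordinates is $(\ell 2^{-N},(\ell+1)2^{-N}]\times(\theta_j,\theta_{j+1}]$, with $\theta_{j+1}-\theta_j=\tfrac{2\pi}{i+1}$) the solution is the single profile $\rho(t,r,\theta)=f^\ell(\theta-2\pi t r)$, exactly as in Lemma~\ref{rnm}. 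One cannot argue directly on $Q$, because after the change of variables $y=\theta-2\pi t r$ the integrand becomes $f^{\ell(r)}(y)$, which loses its $2\pi$-periodicity once $\ell$ varies, and periodicity is precisely what drives the cancellation.

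First I would dispose of the central tile: $Q_{00}^M=B_{2^{-M}}$, and since the zero circular average of $\rho_0$ is preserved by the flow, $\int_{Q_{00}^M}\rho(t,\cdot)=0$, so we may assume $i\ge1$. We also recall $\|\rho(t,\cdot)\|_{L^\infty}=\|\rho_0\|_{L^\infty}$, since $u$ is divergence-free. On each $Q'_\ell$ I would run the computation of Lemma~\ref{rnm}: change variables $y=\theta-2\pi t r$, write the integral as $\tfrac{1}{(2\pi t)^2}\int_{\theta_j}^{\theta_{j+1}}(I_1^\ell(\theta)+I_2^\ell(\theta))\,d\theta$, estimate the boundary term by $|I_1^\ell(\theta)|\lesssim(1+t(i+1)2^{-M})\|\rho_0\|_{L^\infty}$ (the largest radius occurring in $Q'_\ell$ is at most $(i+1)2^{-M}$), and estimate the full-period term $I_2^\ell$ as a sum of at most $t2^{-N}$ integrals each of size $O(\|\rho_0\|_{L^\infty})$, using that $f^\ell$ has zero mean. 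This yields
\[
\Big|\int_{Q'_\ell}\rho(t,\cdot)\Big|\lesssim\Big(\tfrac{1}{(i+1)t^2}+\tfrac{2^{-M}}{t}+\tfrac{2^{-N}}{(i+1)t}\Big)\|\rho_0\|_{L^\infty}.
\]
Summing over the $2^{N-M}$ values of $\ell$ and dividing by $|Q|\simeq2^{-2M}$,
\[
\Big|\fint_Q\rho(t,\cdot)\Big|\lesssim\Big(\tfrac{2^{N+M}}{(i+1)t^2}+\tfrac{2^N}{t}+\tfrac{2^M}{(i+1)t}\Big)\|\rho(t,\cdot)\|_{L^\infty}.
\]
For $i\ge1$, $M\le N$ and $t\ge2^M$ (automatic under either hypothesis $t\ge C2^N/\kappa$ or $t\ge C2^{M+N}$) the first and third terms are dominated by the second, so $\big|\fint_Q\rho(t,\cdot)\big|\lesssim\tfrac{2^N}{t}\|\rho(t,\cdot)\|_{L^\infty}$; choosing the absolute constant $C$ large enough this is $\le\tfrac{\kappa}{4}\|\rho(t,\cdot)\|_{L^\infty}$ when $t\ge C2^N/\kappa$, giving \eqref{fpfand2PREQ}, and $\le2^{-M}\|\rho(t,\cdot)\|_{L^\infty}$ when $t\ge C2^{M+N}$, giving \eqref{fpfand2}.

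The main obstacle is not conceptual — no cancellation beyond that of Lemma~\ref{rnm} is needed — but rather the careful bookkeeping of the powers of two: the decomposition produces $2^{N-M}$ sub-tiles, a number as large as $2^N$, and one must verify that this proliferation is exactly compensated, in the $I_2$ estimate by the factor $2^{-N}$ coming from the narrow radial width of each sub-tile, and in the $I_1$ estimate by the cancellation of $2^{N-M}$ against $|Q|^{-1}\simeq2^{2M}$ down to the clean factor $2^N$. This accounting is what produces the two stated thresholds; in particular the exponent $M+N$ in \eqref{fpfand2}, rather than $2N$, reflects that the data resolution $2^{-N}$ is paid only once, which is the efficiency alluded to in the comparison with \eqref{fpfand2Old}.
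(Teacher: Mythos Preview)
Your proof is correct and follows essentially the same route as the paper: decompose each tile $Q_{ij}^M$ into the $2^{N-M}$ radial sub-tiles on which $\rho_0$ is constant in $r$ (the paper calls these $D_{ij,M}^{k,N}$, your $Q'_\ell$), rerun the $I_1/I_2$ computation of Lemma~\ref{rnm} on each, and sum. The only cosmetic difference is that the paper verifies the target inequality on each sub-tile first and then averages, whereas you sum the raw integral bounds and then divide by $|Q|\simeq2^{-2M}$; the bookkeeping is the same and leads to the same thresholds $t\ge C2^N/\kappa$ and $t\ge C2^{M+N}$.
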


The proof is very similar to that of Lemma \ref{rnm}. 

\begin{proof}

Given $M \leq N$, 
we define the following sub-tiling of each $Q_{ij}^{M}\in\mathcal{Q}_M$:
		\begin{equation*}
		D_{ij, M}^{k,N}=\left\{ r \in \left( i 2^{-M}+k2^{-N},  i 2^{-M}+(k+1)2^{-N} \right] , \ \theta \in 2 \pi \left( \frac{j}{i+1},\frac{j+1}{i+1}\right]  \right\}
		\end{equation*}
			for $k=0,...,2^{N-M}-1$. We denote by $\mathcal{D}_N^M$ the family of all sub-tiles $D_{ij, M}^{k,N}$.
			\begin{figure}[h]
				\begin{center}
					\scalebox{0.4}{\input{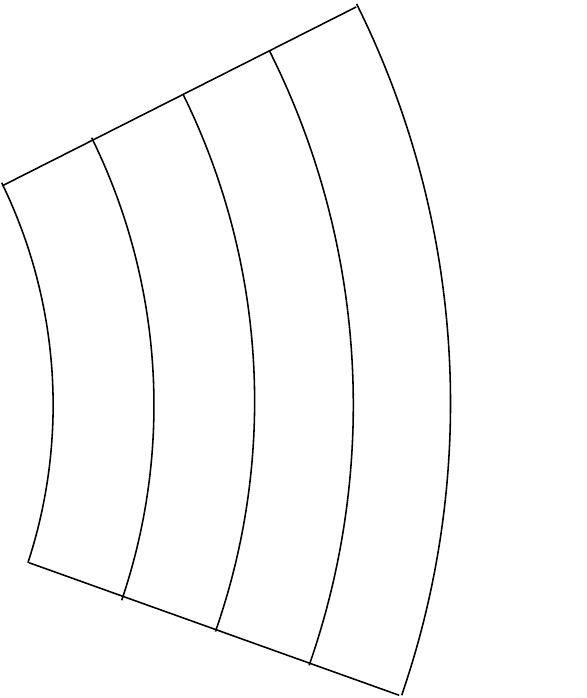tex_t}}
				\end{center}
				\caption{Sub-tiling of $Q\in\mathcal{Q}^M$}
			\end{figure}
			We will show that for all $D\in \mathcal{D}_N^M$ we have that
			\begin{equation}
			\label{pluugPREQ}
	\left|\int_{D} \rho(t, \cdot)\right|\leq \frac{\kappa}{4}  |D| \,\|\rho(t, \cdot)\|_{L^{\infty}} ,  \qquad \mbox{if $t\geq \frac{C}{\kappa} 2^{N}$} \, ,  
			\end{equation}
	and
			\begin{equation}
			\label{pluug}
	\left|\int_{D} \rho(t, \cdot)\right|\leq |D|\,\|\rho(t, \cdot)\|_{L^{\infty}}2^{-M} , \qquad \mbox{if $t\geq C2^{(N+M)}$} \, .
			\end{equation}
			This is enough to prove the statement. Indeed,  this would imply,
			for any $Q\in \mathcal{Q}^M$, that
			\begin{equation*}
                      \left|\int_{Q} \rho(t, \cdot) \right|
                      \leq 
                      \sum_{\substack{D \in \mathcal{D}_N^M \\ D \subset Q} } \left|\int_{D} \rho(t, \cdot) \right| \leq \frac{\kappa}{4} \|\rho\|_{L^{\infty}} 
                     \!\!\!\!\! \sum_{\substack{D \in \mathcal{D}_N^M \\ D \subset Q} } |D|= \frac{\kappa}{4} \|\rho(t, \cdot)\|_{L^{\infty}} \left| Q\right|
			\end{equation*}
		and
        			\begin{equation*}
                      \left|\int_{Q} \rho(t, \cdot) \right|
                      \leq 
                      \sum_{\substack{D \in \mathcal{D}_N^M \\ D \subset Q} } \left|\int_{D} \rho(t, \cdot) \right| \leq  2^{-M} \|\rho\|_{L^{\infty}} 
                      \sum_{\substack{D \in \mathcal{D}_N^M \\ D \subset Q} } |D|=2^{-M} \|\rho(t, \cdot)\|_{L^{\infty}} \left| Q\right| \,,
			\end{equation*}
			which concludes the proof.  
			
			The computation of \eqref{pluugPREQ} and \eqref{pluug} is
			similar to the one we performed in the proof of Lemma~\ref{rnm}, so that we will omit 
			the redundant details. 
			We fix $D_{ij, M}^{k,N}\in\mathcal{D}_N^M$.
			Recalling the zero average condition on circles, it is sufficient to consider $i \geq 1$. 
			We let 
			$$
			r_i^k=i2^{-M}+k2^{-N}, \quad i = 1, \ldots, 2^{M}-1, \quad k=0, \ldots, 2^{N-M}-1 \, ,
			$$
			$$
			\theta_{j}=\frac{j}{i+1}2\pi, \quad j=0, \ldots, i \, .
			$$
		Proceeding as in the proof of Lemma~\ref{rnm},
		we compute
		\begin{align}\nonumber
		\int_{D_{ij, M}^{k,N}} \rho(t, \cdot) &= \int_{\theta_{j}}^{\theta_{j+1}} \int_{r_{i}^{k}}^{r_{i+1}^{k}}  
		\rho(t, \theta, r) \, r  \ d r d\theta 
		\\ \nonumber
                  & 
                   = \frac{1}{(2\pi t)^{2}} \int_{\theta_{j}}^{\theta_{j+1}}  I_{1}(\theta) \ d \theta
		+
		\frac{1}{(2\pi t)^{2}} \int_{\theta_{j}}^{\theta_{j+1}}  I_{2}(\theta) \ d \theta
		\end{align}
		where $I_{1}(\theta)$ and $I_{2}(\theta)$ are defined like in \eqref{kata} and \eqref{yolinski}, replacing $r_{i}, r_{i+1}$ with~$r_{i}^{k}, r_{i+1}^{k}$.
		%
		%
		%
		The estimate of the contribution of $I_{1}$ is the same, namely
		\begin{equation}\label{turnip2}
		\begin{split}
		\frac{1}{(2\pi t)^{2}} \int_{\theta_{j}}^{\theta_{j+1}} |I_{1}(\theta)|\,  d \theta 
		&\lesssim \left(\frac{1}{t^2}+\frac{2^{-M}}{t}\right)\| \rho_{0} \|_{L^{\infty}}  \, ,
		\end{split}
		\end{equation}
		and, by
		$|D_{ij, M}^{k,N}| \simeq 2^{-(N+M)} $, we have that
		\begin{equation}\label{grday2PREQ}
		\frac{1}{(2\pi t)^{2}} \int_{\theta_{j}}^{\theta_{j+1}} |I_{1}(\theta)|\,  d \theta	< \frac{\kappa}{8} \| \rho(t,\cdot)\|_{L^{\infty}} |D_{ij, M}^{k,N}|  \, ,
		\end{equation}
                 as long as $t \geq \frac{C}{\kappa} 2^{N} $, 
                 for some
                 absolute large constant~$C > 1$,
		and
		\begin{equation}\label{grday2}
		\frac{1}{(2\pi t)^{2}} \int_{\theta_{j}}^{\theta_{j+1}} |I_{1}(\theta)|\,  d \theta	<  \| \rho(t,\cdot)\|_{L^{\infty}} |D_{ij, M}^{k,N}| 2^{-M} \, .
		\end{equation}
		as long as $t\geq C 2^{N+M}$.
		
		The contribution of $I_2(\theta)$ is different,
		%
		%
		since in the (analogous of) the sum in~\eqref{yolinski} there are now less than~$2\pi t (r_{i+1}^{k}-r_{i}^{k}) = 2\pi t2^{-N}$ terms, so
		that we get
		\begin{equation}
		\begin{split}
		\frac{1}{(2\pi t)^{2}}  \left| \int_{\theta_{j}}^{\theta_{j+1}} I_{2}(\theta) \ d\theta \right|
		&
		\lesssim \frac{\theta_{j+1}-\theta_{j}}{t} 2^{-N} \| \rho_{0} \|_{L^{\infty}} 
		\\ 
		& = \frac{2^{-N}}{(i+1)t}  \| \rho(t,\cdot) \|_{L^{\infty}}
		\leq\frac{2^{-N}}{t}  \| \rho(t,\cdot) \|_{L^{\infty}} \, .
		\end{split}
		\end{equation}
		Again, by $|D_{ij, M}^{k,N}|  \simeq 2^{-(M+N)}$, we have that
		\begin{equation}\label{greatdayPREQ}
		\frac{1}{(2\pi t)^{2}} \int_{\theta_{j}}^{\theta_{j+1}} |I_{2}(\theta)|\,  d \theta	< \frac{\kappa}{8} \| \rho(t,\cdot)\|_{L^{\infty}} |D_{ij, M}^{k,N}|  \, ,
		\end{equation}
		as long as $t \geq \frac{C}{\kappa} 2^{N}$, and
		\begin{equation}\label{greatday}
		\frac{1}{(2\pi t)^{2}} \int_{\theta_{j}}^{\theta_{j+1}} |I_{2}(\theta)|\,  d \theta	<  \| \rho(t,\cdot) \|_{L^{\infty}} |D_{ij, M}^{k,N}| 2^{-M} \, ,
		\end{equation}
		provided $t\geq C2^{M+N}$. 
		Combining \eqref{grday2PREQ} and  \eqref{greatdayPREQ}, we arrive at \eqref{pluugPREQ},
		and
		combining \eqref{grday2} and \eqref{greatday}, we arrive at \eqref{pluug}. This concludes the proof of Lemma~\ref{Wutang}.
		\end{proof}

	\subsection{Proof of Proposition~\ref{gridini}}\label{sunkd}
	
We rely on Lemma~\ref{rnm}, in which we established how the average of the solution decays on
each annular tile. Given any $t > 2C \frac{2^{N}}{\kappa}$, 
where~$C$ is the absolute constant of the lemma,  
 we set  
 $M := \lfloor \log_{2} ( C^{-1} \kappa t ) \rfloor$.
We note that~$M > N $, by~$t > 2 C \frac{2^{N}}{\kappa}$.
By definition of~$M$, we also have~$ t \geq  C \frac{2^{M}}{\kappa}$, 
so that we can apply Lemma~\ref{rnm} and
$$
\left|\fint_{Q} \rho(t, \cdot) \right|\leq \frac{\kappa}{4} \| \rho(t,\cdot)\|_{L^{\infty}}, \quad
\forall Q \in \mathcal{Q}_{M} \, , 
$$
and then Lemma~\ref{mpc} implies
$$\mathcal{G}(\rho(t,\cdot)) \lesssim \frac{2^{-M}}{\kappa} \, .$$
Noting~$ 2^{-M}  \leq \frac{2C}{\kappa  t} $, again by definition 
of~$M$, we arrive at 
$$
\mathcal{G}(\rho(t,\cdot)) \lesssim  \frac{1}{\kappa^{2} t} \, ,$$ 
as claimed in \eqref{SpecialDataFunctGeom}.

The proof of~\eqref{SpecialDataFunct} is similar. Given $t \geq 4C2^{2N}$, with $C$ the large constant of Lemma~\ref{rnm}, we 
set $M := \lfloor \frac{1}{2} \log_{2} C^{-1}  t \rfloor$. Again we have $M > N$ and $t \geq C2^{2M}$, so that, applying the lemma 
we get
\begin{equation}
\left|\fint_{Q}\rho(t,\cdot) \right| \leq 2^{-M} \|\rho(t,\cdot)\|_{L^{\infty}}, \quad \forall Q \in \mathcal{Q}_{M} \, ,
\end{equation}
which implies by Lemma~\ref{mpc2} that
$$ 
 \|\rho (t,\cdot) \|_{\dot{H}^{-1}}\leq C\|\rho(t,\cdot)\|_{L^{\infty}}2^{-M} \, .
$$
Now, using $2^{-M} \lesssim \frac{1}{\sqrt{t}}$, the inequality~\eqref{SpecialDataFunct} follows and the proof is concluded. \hfill $\Box$

\section{Proof Theorems \ref{MT} and \ref{ratethm} and of Proposition \ref{rateprop}}\label{Sec:Proofs}

The key point in all the proofs in this section is to approximate $\rho_0$ by a sequence of piecewise constant data $\rho_{0}^{N}$ of the form \eqref{InitialDatumForm}, for which we 
have already proved decay estimates for both the geometric and functional mixing scales. The quantification of the decay of the mixing scale will turn out to strongly depend on the quantification of the approximation of the initial datum.

The approximated data $\rho_{0}^{N}$ are defined, on each $Q_{ij}^{N} \in \mathcal{Q}_{N}$, in the following way:  
\begin{equation}\label{Rho0Approx}
\rho_{0}^{N} \Big|_{Q_{ij}^{N}} = (\rho_{0})_{Q_{ij}^{N}} :=  \fint_{ Q_{ij}^{N}  } \rho_{0} \, .
\end{equation}
Note that $\rho_{0}^{N}$ satisfies Assumption~\ref{equalradius3} provided $\rho_0$ satisfies it. 
Indeed, if we take $r \in ( r_{i}, r_{i+1}]$, we have
\begin{align}\label{SeroMeanCirslesPract}
 \int_{\partial B_{r}}   \rho_{0}^{N}\,dS_r  & = 
\frac{2\pi r}{i+1} \sum_{j=0}^{i}  \rho_{0}  \Big|_{Q_{ij}^{N}}
= \frac{2\pi r}{i+1} \sum_{j=0}^{i} \int_{\theta_{j}}^{\theta_{j+1}} \int_{r_{i}}^{r_{i+1}} \rho_{0}(\theta, R) R \, dR d \theta 
\\ \nonumber
&    
= \frac{2\pi r}{i+1} \int_{r_{i}}^{r_{i+1}} \left(  \int_{\theta_{0} = 0}^{\theta_{i+1} = 2 \pi}  \rho_{0}(\theta, R) d \theta \right)  R \,  dR  = 0 \, ,
\end{align}
where, in the last identity, we have used that $\rho_{0}(\cdot, R)$ has zero average on $[0,2\pi]$ for almost every $R$; see \eqref{ZeroMeanAss}.

\subsection{Proof of Theorem~\ref{MT}}
\label{merge}

Recalling that $\mathcal{Q}_{N}$ is a family of sets of bounded eccentricity, by the Lebesgue Differentiation 
Theorem and Dominated Convergence Theorem we have that
\begin{equation}
\label{9th}
\lim_{N\to \infty} \|\rho_0^N-\rho_0\|_{L^1}\to 0\, .
\end{equation}
Now let $M\in\mathbb{N}$ be fixed. By \eqref{9th}, we can choose $N$ large enough, so that 
\begin{equation}
\label{trapin}
\|\rho_0^N-\rho_0\|_{L^1}\leq 2^{-2M}\left(\frac{\kappa}{4}\right)^2\|\rho_0\|_{L^{\infty}}\, .
\end{equation}
Denoting $\rho^N(t,\cdot)$ the evolution of $\rho_{0}^{N}$ at time $t$, we define the set
\begin{equation}\label{TCHEB1}
A_t^N=\left\lbrace | \rho^N(t,\cdot)-\rho(t,\cdot)| >\frac{\kappa}{4}\|\rho_0\|_{L^{\infty}} \right\rbrace \, .
\end{equation}
Using that the flow is measure preserving, by 
Chebychev inequality and~\eqref{trapin} we have
\begin{equation}\label{TCHEB2}
| A_t^N | = |A_0^N| 
\leq  \frac{\| \rho_{0}^{N} - \rho_{0}\|_{L^{1}}}{\frac{\kappa}{4}\|\rho_0\|_{L^{\infty}} }   \leq 2^{-2M}\left(\frac{\kappa}{4}\right) \, .
\end{equation}
We decompose
\begin{equation}
\label{dra}
\left|\fint_{B_{2^{-M}}(x)} \rho(t,\cdot)
\right|\leq \left|\fint_{B_{2^{-M}}(x)} (\rho - \rho^N)(t,\cdot) \right|
+ \left|\fint_{B_{2^{-M}}(x)}\rho^N (t,\cdot) \right| \, .
\end{equation}
Notice that, as a consequence of Proposition~\ref{gridini} with an accuracy parameter $\kappa/2$, the second term on the right is 
bounded by~$\frac{\kappa}{2} \|\rho(t,\cdot)\|_{L^{\infty}}$, for all sufficiently 
large $t$.
%
%
For the first term we can bound
\begin{equation}
\begin{split}
\left|\fint_{B_{2^{-M}}(x)}(\rho-\rho^N)(t,\cdot) \right|& \leq \left|\frac{1}{\pi 2^{-2M}}\int_{B_{2^{-M}}(x)\cap A_t^N}(\rho-\rho^N)(t,\cdot) \right|\\
& \qquad \qquad +\left|\frac{1}{\pi 2^{-2M}}\int_{B_{2^{-M}}(x) \setminus A_t^N}(\rho-\rho^N)(t,\cdot) \right|\\
& \leq \frac{1}{\pi 2^{-2M}} 2 \|\rho\|_{L^{\infty}} |A_{t}^N| + \sup_{y \notin A_{t}^N}(\rho-\rho^N)(t,y)   \\
& \leq \frac{\kappa}{4} \|\rho\|_{L^{\infty}}  +  \frac{\kappa}{4}\|\rho\|_{L^{\infty}}
\leq \frac{\kappa}{2}\|\rho\|_{L^{\infty}}
\end{split}
\end{equation}
where in the last inequality we have used \eqref{TCHEB2} and~\eqref{TCHEB1}. Back to \eqref{dra}, we have shown that
\begin{equation}
\left|\fint_{B_{2^{-M}}(x)} \rho(t,\cdot) \right|\leq\kappa\|\rho\|_{L^{\infty}}
\end{equation}
for all sufficiently large $t$. Since $M$ was arbitrary, we conclude that $\mathcal{G}(\rho(t,\cdot))\to 0$ as $t \to \infty$. The proof for the $\dot{H}^{-1}$ norm is similar. An analogous argument shows that for any $M\in \mathbb{N}$ and $t$ sufficiently large, we have that
	\begin{equation}
	\fint_Q \rho(t, \cdot) \leq 2C\|\rho\|_{L^{\infty}}2^{-M}
	\end{equation}
	for any $Q\in \mathcal{Q}_{M}$, which implies, by Lemma \ref{mpc2}, that 
	$$
	\|\rho(t,\cdot)\|_{\dot{H}^{-1}}\leq C\|\rho\|_{L^{\infty}}2^{-M}
	$$ 
	for $t$ sufficiently large. Since $M$ is arbitrary, we conclude that $\|\rho(t,\cdot)\|_{\dot{H}^{-1}}\to 0$ as $t \to \infty$. \hfill $\Box$

\subsection{Proof of Theorem \ref{ratethm}}\label{takeA} 

(i) If $\rho_{0}$ is continuous on $\overline{B_1}$ and zero on $\R^{2} \setminus \overline{B_1}$, we can choose $N$ sufficiently large so that  
\begin{equation}
\label{tasta}
\| \rho(t,\cdot) - \rho^{N}(t, \cdot)\|_{L^{\infty}}
=
\| \rho_{0} - \rho^{N}_{0}\|_{L^{\infty}} 
< \frac{\kappa}{4} \| \rho_{0} \|_{L^{\infty}}
= \frac{\kappa}{4} \| \rho(t,\cdot) \|_{L^{\infty}} \, .
\end{equation}
Now, for all $t \geq C 2^{N+3} / \kappa$ we set
 \begin{equation}\label{WITSEWHAUD}
 M = \lfloor \log_{2} (C^{-1} \kappa t) \rfloor  \, ,
 \end{equation}
 where $C$ is the constant in Lemma~\ref{rnm}.
This implies 
$M > N$ and~$t \geq C 2^{M} / \kappa$. Notice that
for any $Q_{ij}^{M}\in\mathcal{Q}_M$ we have 
\begin{equation}
\left| \fint_{Q_{ij}^{M}} \rho(t,\cdot) \right|\leq\left| \fint_{Q_{ij}^{M}} \rho(t,\cdot) - \rho^{N}(t, \cdot) \right|+\left| \fint_{Q_{ij}^{M}} \rho^N(t,\cdot) \right| \, . 
\end{equation}
By \eqref{tasta}, the first term is bounded by $\frac{\kappa}{4} \| \rho(t,\cdot) \|_{L^{\infty}}$. Using Lemma~\ref{rnm}, 
the second term is also bounded by $\frac{\kappa}{4} \| \rho(t,\cdot) \|_{L^{\infty}}$.
Recalling Lemma~\ref{mpc}, this gives 
		\begin{equation}\nonumber
		\mathcal{G}(\rho(t,\cdot))\leq C  \frac{2^{-M}}{\kappa}  \leq  \frac{2 C^{2}}{\kappa^{2} t} \, ,   
		\end{equation}
with a possibly larger constant $C$,
where in the second estimate we have again used definition~\eqref{WITSEWHAUD}. This concludes the proof of~\eqref{ContRes}.

\medskip

(ii) We now let $\rho_{0}$ belong to $\dot{W}^{\alpha,1}$, for some $\alpha \in (0,1]$. We begin by proving the following inequalities. 
\begin{claim}
	\label{maloney}
	Let $\alpha \in (0,1)$. Then there exists a constant $C=C(\alpha)$ such that for all $N\in\mathbb{N}$ and $Q\in\mathcal{Q}_{N}$ we have that
	\begin{equation}\label{maloney2}
	\|\rho_0-(\rho_0)_Q\|_{L^1(Q)}\leq C2^{-N\alpha}\iint\limits_{Q\times Q}\frac{|\rho_0(x)-\rho_0(y)|}{|x-y|^{2+\alpha}}\,dx\,dy\, ,
	\end{equation}
	and there exists a constant $C$ such that for all $N\in\mathbb{N}$ and $Q\in\mathcal{Q}_{N}$ we have that
		\begin{equation}\label{PoincRescaled}
		\|\rho_0-(\rho_0)_Q\|_{L^1(Q)} \leq C2^{-N}\|\nabla\rho_0\|_{L^1(Q)} \, .
		\end{equation}
\end{claim}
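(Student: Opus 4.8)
The second inequality \eqref{PoincRescaled} requires no new work: it is exactly Lemma~\ref{poinca} applied to $\xi=\rho_0$ with $M=N$ (valid whenever $\nabla\rho_0\in L^1$, which covers the case $\alpha=1$). So the real content of the claim is the fractional estimate \eqref{maloney2}, and the plan is to prove it directly on each single tile, without rescaling to a reference configuration.

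First I would start from the identity $\rho_0(x)-(\rho_0)_Q=\fint_Q\big(\rho_0(x)-\rho_0(y)\big)\,dy$, valid for a.e.\ $x\in Q$, so that by the triangle inequality $|\rho_0(x)-(\rho_0)_Q|\le\fint_Q|\rho_0(x)-\rho_0(y)|\,dy$; integrating in $x$ over $Q$ gives
\[
\|\rho_0-(\rho_0)_Q\|_{L^1(Q)}\le\frac{1}{|Q|}\iint_{Q\times Q}|\rho_0(x)-\rho_0(y)|\,dx\,dy \, .
\]
Next I would bring in the Gagliardo kernel: since $x,y\in Q$ forces $|x-y|\le\diam Q\lesssim 2^{-N}$ by Remark~\ref{Rem:sizes}, one has $|\rho_0(x)-\rho_0(y)|\le(\diam Q)^{2+\alpha}\,|x-y|^{-(2+\alpha)}\,|\rho_0(x)-\rho_0(y)|$. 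Inserting this and using also $|Q|\gtrsim 2^{-2N}$ (again Remark~\ref{Rem:sizes}), the prefactor $(\diam Q)^{2+\alpha}/|Q|$ is $\lesssim 2^{-N\alpha}$, which is exactly \eqref{maloney2}.

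I do not expect a genuine obstacle here. The argument above in fact yields an absolute constant, which stays bounded as $\alpha\to 0$ and $\alpha\to 1$, consistent with the statement; the only points deserving a line of care are that the pointwise manipulations are to be read in the $L^1$ sense (equivalently, at Lebesgue points of $\rho_0$), and that the right-hand side of \eqref{maloney2} is finite, being dominated by the full Gagliardo seminorm of $\rho_0$ over $B_1\times B_1$. Working tile by tile, as above, also sidesteps the one subtlety a rescaling proof would encounter: the need for a Poincar\'e constant uniform over the varying shapes of the annular tiles $Q_{ij}^N$ (which all have uniformly bounded eccentricity).
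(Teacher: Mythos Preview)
Your proposal is correct and follows essentially the same argument as the paper: reduce to the double average $\frac{1}{|Q|}\iint_{Q\times Q}|\rho_0(x)-\rho_0(y)|\,dx\,dy$ via the identity $\rho_0-(\rho_0)_Q=\fint_Q(\rho_0-\rho_0(y))\,dy$, then use $|x-y|\le\diam Q\lesssim 2^{-N}$ and $|Q|\gtrsim 2^{-2N}$ to replace $1/|Q|$ by $2^{-N\alpha}|x-y|^{-(2+\alpha)}$. The paper also defers \eqref{PoincRescaled} to Lemma~\ref{poinca}, exactly as you do.
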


\begin{proof}
The family of Poincar\'e inequalities~\eqref{PoincRescaled} has been already proved in Lemma~\ref{mpc2}. 
In order to prove~\eqref{maloney2} we recall that $|Q| \simeq 2^{-2N}$ and $\diam Q \simeq 2^{-N}$, so that
$$
\frac{1}{|Q|} \lesssim \frac{2^{-N\alpha}}{|x-y|^{2+\alpha}}, \quad \mbox{if} \quad (x,y) \in Q \times Q \, .
$$
Therefore we have
\begin{align}
\int\limits_Q |\rho_0(y)-(\rho_0)_Q|\,dy 
& 
= \int\limits_Q \left|\rho_0(y)- \fint \rho_0 (x) dx \right|\,dy
\\ \nonumber
& \leq \frac{1}{|Q|}\int\limits_Q  \int\limits_Q | \rho_0(y) -  \rho_0 (x) |\, dx dy 
\\ \nonumber
& \lesssim 2^{-N\alpha} \int\limits_Q  \int\limits_Q \frac{| \rho_0(y) -  \rho_0 (x) |}{|x-y|^{2+\alpha}} \, dx dy \, . \qedhere 
\end{align} 
\end{proof}

As a consequence of Claim \ref{maloney}, we compute the following rate of the approximation
for the initial data.
\begin{claim}
	\label{jodyhi}
	For any $\alpha\in(0,1]$, there exists a constant $C=C(\alpha)$ such that for any $N\in \mathbb{N}$ we have that
\begin{equation}
\|\rho_0-\rho_0^N\|_{L^1}\leq C2^{-N\alpha}\|\rho_0\|_{\dot{W}^{\alpha,1}} \, .
\end{equation}
\end{claim}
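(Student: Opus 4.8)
The plan is to sum the local estimates of Claim \ref{maloney} over all tiles $Q \in \mathcal{Q}_N$ and then control the resulting sum of local $\dot{W}^{\alpha,1}$-type seminorms by the global one. Since $\rho_0^N$ equals the average $(\rho_0)_Q$ on each $Q \in \mathcal{Q}_N$, by definition we have
\begin{equation}\nonumber
\|\rho_0 - \rho_0^N\|_{L^1} = \sum_{Q \in \mathcal{Q}_N} \|\rho_0 - (\rho_0)_Q\|_{L^1(Q)}.
\end{equation}
For $\alpha \in (0,1)$ I would invoke \eqref{maloney2} on each tile, giving
\begin{equation}\nonumber
\|\rho_0 - \rho_0^N\|_{L^1} \leq C 2^{-N\alpha} \sum_{Q \in \mathcal{Q}_N} \iint_{Q \times Q} \frac{|\rho_0(x) - \rho_0(y)|}{|x-y|^{2+\alpha}}\,dx\,dy.
\end{equation}
The sets $Q \times Q$ for $Q \in \mathcal{Q}_N$ are pairwise disjoint subsets of $B_1 \times B_1$, so the sum of these local double integrals is bounded by the single double integral over $B_1 \times B_1$, which is precisely (a constant multiple of) $\|\rho_0\|_{\dot{W}^{\alpha,1}}^{}$ in the Gagliardo–Slobodeckij characterization of the fractional Sobolev seminorm. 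This yields the claim for $\alpha \in (0,1)$.

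For the endpoint $\alpha = 1$ the argument is the same but uses \eqref{PoincRescaled} instead: summing over $Q \in \mathcal{Q}_N$ and using that the tiles have pairwise disjoint interiors gives
\begin{equation}\nonumber
\|\rho_0 - \rho_0^N\|_{L^1} \leq C 2^{-N} \sum_{Q \in \mathcal{Q}_N} \|\nabla \rho_0\|_{L^1(Q)} = C 2^{-N} \|\nabla \rho_0\|_{L^1} = C 2^{-N}\|\rho_0\|_{\dot{W}^{1,1}}.
\end{equation}
Here I am implicitly using that $\rho_0 \in \dot{W}^{1,1}$ (so that the Poincaré estimate applies on each tile with the gradient in $L^1$), which is the meaning of the hypothesis when $\alpha = 1$; if $\rho_0$ is only assumed $\dot{W}^{\alpha,1}$ for $\alpha<1$ one stays with the fractional estimate throughout.

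I do not expect any real obstacle here: the only point requiring a little care is the interplay between the equivalent definitions of $\dot{W}^{\alpha,1}$ — the Gagliardo seminorm for $\alpha \in (0,1)$ versus $\|\nabla \cdot\|_{L^1}$ for $\alpha = 1$ — and making sure the disjointness of the products $Q \times Q$ is used correctly to pass from the sum of local seminorms to the global one (one genuinely needs that each pair $(x,y)$ with $x,y$ in the \emph{same} tile is counted once, which is automatic). The constant $C=C(\alpha)$ is inherited from Claim \ref{maloney}, and from the remark on the behaviour of the constants we expect it to stay bounded as $\alpha \to 0$, consistent with the later use of this claim in the proof of \eqref{SobRes}.
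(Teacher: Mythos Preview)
Your proposal is correct and matches the paper's proof essentially line for line: decompose $\|\rho_0-\rho_0^N\|_{L^1}$ as the sum over tiles of $\|\rho_0-(\rho_0)_Q\|_{L^1(Q)}$, apply \eqref{maloney2} for $\alpha\in(0,1)$ (resp.\ \eqref{PoincRescaled} for $\alpha=1$), and then use the disjointness of the products $Q\times Q$ (resp.\ of the tiles $Q$) to bound the sum by the global Gagliardo seminorm (resp.\ $\|\nabla\rho_0\|_{L^1}$). The only cosmetic difference is that the paper treats the endpoint $\alpha=1$ first.
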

\begin{proof}
Let first $\alpha=1$. Recalling the rescaled Poincar\'e inequality \eqref{PoincRescaled}, we can compute
\begin{equation}
\begin{split}
\|\rho_0-\rho_0^N\|_{L^1}
& =\sum_{Q\in\mathcal{Q}_N}\|\rho_0-(\rho_0)_Q\|_{L^1(Q)}
\\
&\leq C2^{-N}\sum_{Q\in\mathcal{Q}_N}\|\nabla\rho_0\|_{L^1(Q)}
=C2^{-N}\|\rho_0\|_{\dot{W}^{1,1}}\,.
\end{split}
\end{equation}
Similarly, for $\alpha\in(0,1)$ we compute
\begin{align}
\|\rho_0-\rho_0^N\|_{L^1}&=\sum_{Q\in\mathcal{Q}_N}\|\rho_0-(\rho_0)_Q\|_{L^1(Q)} \nonumber \\
&\leq C2^{-N\alpha}\sum_{Q\in\mathcal{Q}_N}\,\iint\limits_{Q\times Q}\frac{|\rho_0(x)-\rho_0(y)|}{|x-y|^{2+\alpha}}\,dx\,dy  \label{OnesIn1} \\
&\leq C2^{-N\alpha}\iint\limits_{B_1 \times B_1}\frac{|\rho_0(x)-\rho_0(y)|}{|x-y|^{2+\alpha}}\,dx\,dy \nonumber \\
&=C2^{-N\alpha}\|\rho_0\|_{\dot{W}^{\alpha,1}} \, . \qedhere 
\end{align}
\end{proof}

\medskip

We can now go back to the proof of Theorem \ref{ratethm}(ii). 
We choose 
$$
c=c(\kappa, \|\rho_0\|_{\dot{W}^{\alpha,1}}, \|\rho_0\|_{L^{\infty}})
$$ 
sufficiently large, in such a way that
\begin{equation}\label{TchRhs}
\|\rho_0\|_{\dot{W}^{\alpha,1}}\leq \frac{2^{c}}{2C^{2}}\left(\frac{\kappa}{8}\right)^2\|\rho_0\|_{L^{\infty}}\, , 
\end{equation}
where the constant $C$ is larger than the one in \eqref{OnesIn1}, twice the one in Lemma~\ref{Wutang}, and 
such that $|Q|\geq \frac{1}{C} 2^{-2M}$, for all $M\in\mathbb{N}$ and all $Q\in\mathcal{Q}_M$.
Given any
\begin{equation}\label{SobT}
t \geq \frac{C}{\kappa} 2^{\frac{2+c}{\alpha}} \, ,
\end{equation}
we set 
\begin{equation}\label{SobM}
 M := \left\lfloor  \log_2 \left(\frac{\kappa}{C}\right)^{\frac{\alpha}{2}} 2^{-\frac{c}{2}} t^{\frac{\alpha}{2}}  \right\rfloor \, .
\end{equation}
Notice that by \eqref{SobT} we have $M \geq 1$. We define
\begin{equation}\label{SobSigmaM}
\sigma(M):=\left\lceil\frac{2M+c}{\alpha} \right\rceil 
\end{equation}
and notice that $\sigma(M) > M$. By \eqref{SobM} we have
\begin{equation}\nonumber
t \geq \frac{C}{2\kappa} 2^{\sigma(M)} \, ,
\end{equation}
so that we are allowed to apply Lemma~\ref{Wutang} to the solution $\rho^{\sigma(M)}$. 
Let
\begin{equation}
A_t^{\sigma(M)}=\left\lbrace |\rho^{\sigma(M)}(t,\cdot)-\rho(t,\cdot)|>\frac{\kappa}{8}\|\rho_0\|_{L^{\infty}} \right\rbrace\, .
\end{equation}
Using Claim \ref{jodyhi} and \eqref{TchRhs}, we have that
\begin{equation}
\label{trapin2}
\|\rho_0^{ \sigma(M) }-\rho_0\|_{L^1}\leq \frac{1}{2C} 2^{-2M}\left(\frac{\kappa}{8}\right)^2\|\rho_0\|_{L^{\infty}} \, ,
\end{equation}
for any $M\in\mathbb{N}$. This implies, via  Chebychev's inequality, that
\begin{equation}
\label{yoda}
\left|A_t^{\sigma(M)}\right|=\left|\left\lbrace |\rho_0^{\sigma(M)}-\rho_0|>\frac{\kappa}{8} \|\rho_0\|_{L^{\infty}} \right\rbrace\right|\leq \frac{1}{2C}2^{-2M}\left(\frac{\kappa}{8}\right) \, .
\end{equation}
Let $Q\in\mathcal{Q}_M$. We have 
\begin{equation}
\label{riffrif}
\left|\fint_{Q} \rho(t,\cdot) \right|\leq \fint_{Q}\left| \rho(t, \cdot)-\rho^{\sigma(M)} (t, \cdot) \right| + \left|\fint_{Q}\rho^{\sigma(M)} (t, \cdot) \right|\, ,
\end{equation}
and the second term on the right hand side is estimated by $\frac{\kappa}{4}\|\rho\|_{L^{\infty}}$, using 
Lemma~\ref{Wutang} (recall that $\sigma(M) > M$).
In order to bound the first term we need to use \eqref{yoda}. Indeed, recalling also that $|Q|\geq \frac{1}{C} 2^{-2M}$, we have
%
%
%
%
\begin{equation}\nonumber
\begin{split}
\fint_{Q}  \! \left| \rho(t, \cdot) \! - \! \rho^{\sigma(M)} (t, \cdot)   \right| \!
  & \leq  C 2^{2M} \!\! \int_{Q\cap A_t^{\sigma(M)}} \left| \rho(t, \cdot) \! - \! \rho^{\sigma(M)}(t, \cdot)\right| \!
\\ & \qquad \qquad \qquad
+ \! \frac{1}{|Q|}\int_{Q\setminus A_t^{\sigma(M)}} \left| \rho(t, \cdot) \! - \! \rho^{\sigma(M)} (t, \cdot) \right| \\
& \leq \! C 2^{2M} 2 \|\rho\|_{L^{\infty}} \left| A_t^{\sigma(M)} \right| + \frac{\kappa}{8}\|\rho\|_{L^{\infty}} \\
& \leq \! \frac{\kappa}{4}\|\rho\|_{L^{\infty}}\, .
\end{split}
\end{equation}

In conclusion, we have shown that the averages of $\rho^{\sigma(M)}$ over the elements of $\mathcal{Q}_M$ are bounded by $\frac{\kappa}{2}\|\rho\|_{L^{\infty}}$,
as long as $t$ satisfies \eqref{SobT}.
By Lemma \ref{mpc}, this implies that
\begin{equation}
\label{testdr}
\mathcal{G}(\rho(t,\cdot))\leq \frac{C}{\kappa}2^{-M} \, ,
\end{equation}
but, recalling \eqref{SobM}, we also have 
\begin{equation}\label{TchRhsBis} 
\frac{C}{\kappa}2^{-M} \leq \left( \frac{C}{\kappa} \right)^{1+\frac{\alpha}{2}} 2^{1+\frac{c}{2}} t^{-\frac{\alpha}{2}} \, ,
\end{equation} 
so that \eqref{SobRes} has been proved. \qed
%

\subsection{Proof of Proposition \ref{rateprop} }\label{takeB} 
(i) We are assuming that $\rho_0\in C^{0,\alpha}$, for some $\alpha \in (0,1]$. Let us start by proving the following claim.
\begin{claim}
	\label{yone}
	 We have
	\begin{equation}\label{eq:yone}
  \|\rho(t,\cdot)-\rho^N(t,\cdot)\|_{L^{\infty}} \leq C \| \rho_0\|_{C^{0,\alpha}}  2^{-N\alpha} \, ,
	\end{equation}
	for all $N\in \mathbb{N}$ and for some absolute constant $C$, where
	$$
	\|  \rho_0  \|_{C^{0,\alpha}} = \|\rho_0\|_{L^{\infty}} + \sup_{x,y \in B_1, x \neq y } \frac{|\rho_0(x) - \rho_0(y)|}{|x-y|^{\alpha}} \, .
	$$
\end{claim}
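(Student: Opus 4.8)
The plan is to exploit, exactly as in the proof of Theorem~\ref{MT}(i), the fact that the flow is measure preserving, so that the $L^\infty$ distance between $\rho(t,\cdot)$ and $\rho^N(t,\cdot)$ equals the $L^\infty$ distance between the initial data: $\|\rho(t,\cdot)-\rho^N(t,\cdot)\|_{L^\infty}=\|\rho_0-\rho_0^N\|_{L^\infty}$. (One may prove this pointwise via the ODE flow $\Phi_t$: since $\rho(t,\cdot)=\rho_0\circ\Phi_t^{-1}$ and $\rho^N(t,\cdot)=\rho_0^N\circ\Phi_t^{-1}$, and $\Phi_t^{-1}$ is a bijection of $\overline{B_1}$, the two suprema coincide.) Thus it suffices to bound $\|\rho_0-\rho_0^N\|_{L^\infty}$.

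Next I would work tile by tile. Fix $Q=Q_{ij}^N\in\mathcal{Q}_N$; on $Q$ the function $\rho_0^N$ is the constant $(\rho_0)_Q=\fint_Q\rho_0$. For $x\in Q$,
\begin{equation}\nonumber
|\rho_0(x)-(\rho_0)_Q|=\left|\fint_Q\big(\rho_0(x)-\rho_0(y)\big)\,dy\right|\leq \fint_Q |\rho_0(x)-\rho_0(y)|\,dy\leq \sup_{x,y\in Q, x\neq y}\frac{|\rho_0(x)-\rho_0(y)|}{|x-y|^\alpha}\,(\diam Q)^\alpha .
\end{equation}
By Remark~\ref{Rem:sizes} we have $\diam Q\leq C_2 2^{-N}$, so $(\diam Q)^\alpha\leq C_2^\alpha\, 2^{-N\alpha}\leq \max(1,C_2)\,2^{-N\alpha}$, and the Hölder quotient over $Q$ is bounded by the global one appearing in $\|\rho_0\|_{C^{0,\alpha}}$. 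Taking the supremum over $x\in Q$ and then over all $Q\in\mathcal{Q}_N$ — noting that $\overline{B_1}\setminus\{0\}$ is covered by the tiles and the single point does not affect the $L^\infty$ norm — gives $\|\rho_0-\rho_0^N\|_{L^\infty}\leq C\,\|\rho_0\|_{C^{0,\alpha}}\,2^{-N\alpha}$ with an absolute constant $C$, which combined with the first step yields \eqref{eq:yone}.

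There is essentially no serious obstacle here: the estimate is a routine combination of the measure-preservation identity and the definition of the Hölder seminorm on a set of diameter $\simeq 2^{-N}$. The only mild point to be careful about is that $\rho_0$ is assumed continuous on $\overline{B_1}$ (and zero outside), so that the oscillation on each tile is genuinely controlled by the Hölder seminorm restricted to $B_1$; since each $Q_{ij}^N\subset\overline{B_1}$, this is automatic. One should also note that the constant is indeed absolute (independent of $\alpha$), since $C_2^\alpha\leq\max(1,C_2)$ uniformly in $\alpha\in(0,1]$.
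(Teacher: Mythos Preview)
Your proposal is correct and follows essentially the same approach as the paper: reduce to the initial data via the measure-preserving flow, then on each tile bound the oscillation by the H\"older seminorm times $(\diam Q)^\alpha \lesssim 2^{-N\alpha}$. The only cosmetic difference is that the paper writes $\fint_Q |x-y|^\alpha\,dy$ before invoking the diameter bound, whereas you pass directly to $(\diam Q)^\alpha$; both are equivalent.
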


\begin{proof}
Clearly 
$$
\|\rho(t,\cdot)-\rho^N(t,\cdot)\|_{L^{\infty}} = \|\rho_{0}-\rho^N_{0}\|_{L^{\infty}} \, .
$$
On the other hand, if $x\in Q$, with $Q\in\mathcal{Q}_N$, we can bound  
 \begin{align}\nonumber
\left|\rho_0(x)-\rho_0^N(x)\right|
& \leq   
 \fint_Q\left|\rho_0(x)-\rho_0(y)\right|\,dy 
 \\ \nonumber
 &
\leq \|  \rho_0  \|_{C^{0,\alpha}}  \fint_Q\left| x-y\right|^\alpha\,dy \lesssim \|  \rho_0  \|_{C^{0,\alpha}} 2^{-N\alpha}\, ,
 \end{align}
 where we have used that $\diam (Q) \lesssim 2^{-N}$.
\end{proof}

\medskip

We can now pass to the proof of Proposition \ref{rateprop}(i). Let 
\begin{equation}\label{LongCalc}
t \geq (8C)^{\frac{\alpha+1}{\alpha}} \left( \frac{\| \rho_0\|_{C^{0,\alpha}}}{\| \rho_0\|_{L^{\infty}}}\right)^{\frac{1}{\alpha}} \, ,
\end{equation}
where $C > 1$ is larger than the constants in Lemma 
\ref{Wutang} and in~\eqref{eq:yone}. Then  
we set 
\begin{equation}\label{TAVCN}
M = \left \lfloor  \log_2 \left(  \left( \frac{\| \rho_0\|_{L^{\infty}}}{\| \rho_0\|_{C^{0,\alpha}}} \right)^{\frac{1}{\alpha+1}}  
\frac{ t^{\frac{\alpha}{\alpha+1}} }{2C} \right) \right \rfloor \, .
\end{equation}
Notice that 
\begin{equation}\label{WC2}
2^{-M} <  \left( \frac{\| \rho_0\|_{C^{0,\alpha}}}{\| \rho_0\|_{L^{\infty}}} \right)^{\frac{1}{\alpha+1}}
\frac{4C}{t^{\frac{\alpha}{\alpha+1}}} \,.
\end{equation}
Moreover, \eqref{LongCalc} ensures that $M \geq 1$. Finally we set
\begin{equation}\label{sdvb}
\sigma(M) = \left\lceil 
 \log_2  \left( 
 \left( \frac{2C \| \rho_0\|_{C^{0,\alpha}} }{\| \rho_0\|_{L^{\infty}}} 2^{M} \right)^{\frac{1}{\alpha}} \right)
\right\rceil \, .
\end{equation} 
Notice that $\sigma(M) \geq M +1$ and 
\begin{equation}\label{Claim1+This}
C 2^{-\sigma(M) \alpha} \| \rho_0\|_{C^{0,\alpha}} \leq \frac{1}{2} 2^{-M} \| \rho_0\|_{L^{\infty}} \, .
\end{equation}
Again by \eqref{TAVCN}, we see that 
\begin{equation}\label{C2Diverge}
t > 2C 2^{M} \left( 2C \frac{\| \rho_0\|_{C^{0,\alpha}}}{\| \rho_0\|_{L^{\infty}}} 2^{M} \right)^{\frac{1}{\alpha}} \, ,
\end{equation}
and, by \eqref{sdvb}, that 
$$
\left( 2C \frac{\| \rho_0\|_{C^{0,\alpha}}}{\| \rho_0\|_{L^{\infty}}} 2^{M} \right)^{\frac{1}{\alpha}} > \frac{2^{\sigma(M)}}{2} \, ,
$$
so that we have $t \geq C 2^{M + \sigma(M)}$, namely (using also $\sigma(M) \geq M +1$) we are under the assumptions of Lemma \ref{Wutang}, when we consider the 
solution $\rho^{\sigma(M)}$. 
Thus,  
for any $Q\in\mathcal{Q}_M$, we can bound
\begin{equation}
\label{barts}
\left| \fint_{Q} \rho(t,\cdot) \right|\leq\left| \fint_{Q} \rho(t,\cdot) - \rho^{\sigma(M)}(t, \cdot) \right|+\left| \fint_{Q} \rho^{\sigma(M)}(t,\cdot) \right|\, .
\end{equation}
Both the terms on the right hand side are bounded by $\frac12 2^{-M} \|\rho(t,\cdot)\|_{L^{\infty}}$, the first one because 
of Claim \ref{yone} and the inequality \eqref{Claim1+This}, the second one as consequence of Lemma \ref{Wutang}.
%
Hence the statement follows by Lemma \ref{mpc2}. Indeed, recalling also \eqref{WC2}, we arrive at
\begin{equation}\label{Finally1}
\|\rho(t,\cdot)\|_{\dot{H}^{-1}} \leq C 2^{-M} \| \rho_0 \|_{L^{\infty}}  \leq  
4 C^{2}  \| \rho_0\|_{C^{0,\alpha}}^{\frac{1}{\alpha+1}} \| \rho_0\|_{L^{\infty}}^{\frac{\alpha}{\alpha+1}} t^{-\frac{\alpha}{\alpha+1}} \, ,
\end{equation} 
so that the proof of \eqref{HoeldRes} is concluded.
%
%
%
%

\medskip

(ii) The proof is a variation of that of Theorem \ref{ratethm}(ii). Let us take 
\begin{equation}\label{Sob2T}
t \geq C 8^{\frac{\alpha+4}{\alpha}} 2^{\frac{c}{\alpha}}  \, ,
\end{equation} 
where $C$ is as before and $c$ 
is sufficiently large so that
\begin{equation}\label{TchRhsBisTris}     
\|\rho_0\|_{\dot{W}^{\alpha,1}}\leq \frac{2^{c}}{2C^{2}} \|\rho_0\|_{L^{\infty}}\, .
\end{equation}
Setting
\begin{equation}\label{Sob2Small}
M := \left\lfloor \log_2 \frac{1}{2} C^{-\frac{\alpha}{\alpha+4}} 2^{-\frac{c}{\alpha+4}} t^{\frac{\alpha}{\alpha+4}} \right\rfloor\,,
\end{equation}
a similar argument as in the proof of Theorem \ref{ratethm}(ii) shows that for 
\begin{equation}\label{Sob2Bigg}
\sigma(M):=\left\lceil\frac{4M+c}{\alpha}\right\rceil
\end{equation} we have that
\begin{equation}
\label{trapin3}
\|\rho_0^{\sigma(M)}-\rho_0\|_{L^1}\leq \frac{1}{2C} 2^{-4M}\|\rho_0\|_{L^{\infty}} \, .
\end{equation}
Thus, using
Chebychev's inequality, 
we have that
\begin{equation}
\left|A_t^{\sigma(M)}\right|=\left|\left\lbrace | \rho^{\sigma(M)}(t,\cdot)-\rho(t,\cdot)|>2^{-M}\|\rho_0\|_{L^{\infty}} \right\rbrace\right|\leq \frac{1}{2C} 2^{-3M}\, .
\end{equation}
We bound
\begin{equation}
\label{riffrif2}
\left|\fint_{Q} \rho(t, \cdot)\right|\leq \fint_{Q} \left| \rho(t, \cdot)-\rho^{\sigma(M)})(t, \cdot) \right|+ \left|\fint_{Q}\rho^{\sigma(M)} (t, \cdot) \right|
\end{equation}
for all $Q\in\mathcal{Q}_M$. 
The second term is bounded by $2^{-M}\|\rho\|_{L^{\infty}}$ by Lemma \ref{Wutang},
that we are allowed to apply beacuse $t\geq C 2^{M+\sigma(M)}$ and $\sigma(M) > M \geq 1$, looking at~\eqref{Sob2T},~\eqref{Sob2Small}, and \eqref{Sob2Bigg}. 
The same argument used in the proof of \eqref{SobRes} now shows that the second term is also bounded by $2^{-M}\|\rho\|_{L^{\infty}}$. 
In conclusion, we have shown that  the average of $\rho(t, \cdot)$ over the elements of
$\mathcal{Q}_M$ is bounded by $2\|\rho\|_{L^{\infty}}2^{-M}$.
Thus Lemma \ref{mpc2} implies that 
\begin{equation}
	\|\rho(t,\cdot)\|_{\dot{H}^{-1}} \leq C \|\rho_0\|_{L^{\infty}}2^{-M}\, , 
\end{equation}
and, noting that (see \eqref{Sob2Small})
\begin{equation}\label{TchRhsBisFour}  
2^{-M} \leq 4 C^{1+\frac{\alpha}{\alpha+4}} 2^{\frac{c}{\alpha+4}}  t^{-\frac{\alpha}{\alpha+4}} \, ,
\end{equation}
the proof of \eqref{SobRes2} is concluded.\qed

\section{Appendix: Necessity of Assumption~\ref{equalradius3} }

In Proposition~\ref{WeakDecay} we show that, if the geometric mixing scale of a solution decays to zero for any accuracy parameter $\kappa \in (0,1)$, then such a solution converges to zero weakly in $L^2$. This would not be the case just assuming decay for a given fixed $\kappa$, as pointed out in Remark~\ref{Pato}. 
This fact is then used in Proposition~\ref{NecCond} to show that the zero average condition of Assumption~\ref{equalradius3} is necessary for any bounded initial density in order to get mixed (in either geometric or functional sense) by the velocity field $u$. 

\begin{proposition}\label{WeakDecay}
Let $\rho_{0} \in L^{\infty}$ supported in $\overline{B_{1}}$ be an initial datum for which 
$\lim_{t \to \infty} \mathcal{G}(\rho(t, \cdot)) \to 0$ for all $\kappa \in (0,1)$. Then $\rho(t, \cdot)$ converges to zero weakly in $L^{2}$ as $t \to \infty$.
\end{proposition}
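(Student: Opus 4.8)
The plan is to reduce the claimed weak convergence to the pairing of $\rho(t,\cdot)$ against a dense class of test functions, and then to convert the smallness of the spatial averages of $\rho(t,\cdot)$ — which is precisely what $\mathcal{G}(\rho(t,\cdot))\to 0$ encodes — into smallness of that pairing by a mollification argument.

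First I would record that, since the velocity field $u$ in \eqref{themixa} is smooth, divergence-free and tangent to $\partial B_1$, the solution of \eqref{Cauchyprob} is transported by the associated measure-preserving flow; in particular $\rho(t,\cdot)$ stays supported in $\overline{B_1}$ and $\|\rho(t,\cdot)\|_{L^2}=\|\rho_0\|_{L^2}$, $\|\rho(t,\cdot)\|_{L^\infty}=\|\rho_0\|_{L^\infty}$ for all $t\geq 0$. Hence the family $\{\rho(t,\cdot)\}_{t\geq 0}$ is bounded in $L^2$, and to prove $\rho(t,\cdot)\rightharpoonup 0$ it is enough to show that $\int \rho(t,y)\,\psi(y)\,dy\to 0$ as $t\to\infty$ for every $\psi$ in a dense subset of $L^2$; I would take $\psi$ Lipschitz with compact support, the general case following from the uniform $L^2$ bound by a routine $\delta/2$ splitting.

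So I would fix such a $\psi$ and a target accuracy $\delta>0$, and for $\varepsilon>0$ set $\psi_\varepsilon(y):=\fint_{B_\varepsilon(y)}\psi$. Since the averaging kernel is symmetric, Fubini yields the identity
$$\int \rho(t,y)\,\psi_\varepsilon(y)\,dy=\int \psi(z)\,\Big(\fint_{B_\varepsilon(z)}\rho(t,\cdot)\Big)\,dz ,$$
whereas $\|\psi-\psi_\varepsilon\|_{L^\infty}\leq \varepsilon\,\mathrm{Lip}(\psi)$ and, for $\varepsilon<1$, $\psi-\psi_\varepsilon$ is supported in a fixed compact neighbourhood $K$ of $\mathrm{supp}\,\psi$. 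I would then choose the accuracy parameter $\kappa=\kappa(\delta,\psi,\rho_0)\in(0,1)$ so small that $\kappa\,\|\rho_0\|_{L^\infty}\,\|\psi\|_{L^1}<\delta/2$. By hypothesis $\mathcal{G}(\rho(t,\cdot))\to 0$ for \emph{this} $\kappa$, so for all $t$ large there is a radius $\varepsilon_\ast=\varepsilon_\ast(t)$ — which can be taken smaller than $1$ and smaller than $\delta\big/\big(2\|\rho_0\|_{L^\infty}|K|(1+\mathrm{Lip}(\psi))\big)$ — with $|\fint_{B_{\varepsilon_\ast}(x)}\rho(t,\cdot)|\leq \kappa\|\rho_0\|_{L^\infty}$ for every $x$. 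Splitting $\psi=\psi_{\varepsilon_\ast}+(\psi-\psi_{\varepsilon_\ast})$, the displayed identity bounds the $\psi_{\varepsilon_\ast}$-part by $\|\psi\|_{L^1}\kappa\|\rho_0\|_{L^\infty}<\delta/2$, while the pointwise bound on $\psi-\psi_{\varepsilon_\ast}$ together with $\|\rho(t,\cdot)\|_{L^\infty}=\|\rho_0\|_{L^\infty}$ bounds the remainder by $\|\rho_0\|_{L^\infty}|K|\,\mathrm{Lip}(\psi)\,\varepsilon_\ast<\delta/2$. Hence $\limsup_{t\to\infty}\big|\int\rho(t,\cdot)\psi\big|\leq\delta$, and letting $\delta\to 0$ concludes.

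I expect the main obstacles to be conceptual rather than computational. The first is to pass from control of the averages of $\rho(t,\cdot)$ over balls of arbitrarily small — but, for each fixed $t$, positive — radius to control of the full pairing against a fixed test function; the mollification identity is exactly what makes this possible, the error from replacing $\psi$ by $\psi_{\varepsilon_\ast}$ being harmless since $\psi_{\varepsilon_\ast}\to\psi$ uniformly. The second is the order of quantifiers: $\kappa$ must be chosen small (depending on $\psi$ and $\delta$) before sending $t\to\infty$, and it is the freedom to let $\delta\to 0$, and thereby $\kappa\to 0$, that is essential. This is precisely why one needs the decay $\mathcal{G}(\rho(t,\cdot))\to 0$ for \emph{every} $\kappa\in(0,1)$, a single fixed $\kappa$ being insufficient — consistently with Remark~\ref{Pato}. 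An essentially equivalent route would be to test directly against indicators of balls, whose linear span is dense in $L^2$ by a Vitali covering argument, estimating $\fint_{B_R(x_0)}\rho(t,\cdot)$ by mollifying $\mathbf{1}_{B_R(x_0)}$ at scale $\varepsilon_\ast(t)$; the $\varepsilon_\ast(t)$-collar of $\partial B_R(x_0)$ then produces the same vanishing error.
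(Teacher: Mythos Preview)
Your proof is correct and, while it shares the same underlying convolution/Fubini idea with the paper, it packages it differently. The paper tests against continuous $\phi$, covers $B_1$ by finitely many disjoint balls of small but \emph{fixed} radii, and then needs a separate lemma (Lemma~\ref{Lemma:AllBiggerTimes}) to upgrade ``averages over some $t$-dependent small radius are $\leq\kappa\|\rho\|_{L^\infty}$'' to ``averages over each fixed $r_i$ are $\leq\kappa\|\rho\|_{L^\infty}$ for all large $t$''; that lemma is itself proved by the mollification identity you use. You instead test against Lipschitz $\psi$ and mollify directly at the $t$-dependent scale $\varepsilon_\ast(t)$ furnished by the definition of $\mathcal{G}$, so the self-adjointness of averaging does all the work in one step and no auxiliary lemma is needed. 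The trade-off is that the paper's route isolates, as a clean standalone statement, the fact that the ball averages at every fixed radius go to zero, whereas your route is shorter and more elementary. Your final paragraph on indicators of balls is essentially the paper's approach with Lemma~\ref{Lemma:AllBiggerTimes} absorbed.
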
 

\begin{remark} 
Notice that in the above proposition we are using neither the precise form of the velocity field, nor the fact that the domain is the unit disk. This is a general result relating the decay to zero of the geometric mixing scale to the weak convergence to zero.
\end{remark}

\begin{proof}[Proof of Proposition~\ref{WeakDecay}]
By the density of continuous functions in $L^{2}$ it suffices to show that we have 
$$
\lim_{t \to \infty }\int_{B_1} \rho(t, x) \phi(x) \ dx = 0 \, , 
$$
for all $\phi \in C(\overline{B_1})$. Let $\delta>0$ be given. Our goal is to show that there exists a time $t_0$ such that
\begin{equation}
\label{waltzbill}
\left|\int_{B_1} \rho(t, x) \phi(x) \ dx\right|\leq \delta
\end{equation}
for all $t\geq t_0$. Since $\phi$ is continuous there exists $\bar{\epsilon}=\bar{\epsilon}(\delta,\phi)$ such
that $|\phi(x)-\phi(y)|\leq \frac{\delta}{3}(\|\rho\|_{L^\infty}\pi)^{-1}$ for all $y\in B_\epsilon(x)$ for all $0<\epsilon\leq \bar{\epsilon}$. 
Furthermore, we choose a finite family of disjoint disks $B^1,\ldots, B^N$ such that
\begin{equation}
\label{iiro}
B^i\subset B_1,\hspace{0.5cm}\diam B^i\leq \bar{\epsilon},\hspace{0.5cm}\text{and }\,\left| A \right|\leq \frac{\delta}{3}\left(\|\rho\|_{L^\infty}\|\phi\|_{L^\infty}\right)^{-1} \,,
\end{equation}
where $A=B_1\setminus\bigcup_{i=1}^{N} B^i$. Let $x_i$ be the centers of the disks $B^i$. We have that
\begin{equation}
\label{splitter}
\begin{split}
\left|  \int_{B_1} \rho(t, x)  \phi(x) dx \right| 
& \leq \sum_{i=1}^N \left| \phi(x_{i}) \right| \left| \int_{B^i} \rho(t, x) dx  \right| \\
& \qquad + \sum_{i=1}^N
\max_{x \in B^i} \left|  \phi(x)  - \phi(x_i)  \right|   \int_{B^i} \left| \rho(t, x) \right| dx \\
& \qquad + \int_A \left|\phi(x)\rho(t,x)\right|\,dx \\
& = I+II+III\,.
\end{split}
\end{equation}
Since $\lim_{t \to \infty} \mathcal{G}(\rho(t, \cdot)) \to 0$ for all $\kappa \in (0,1)$, 
using the forthcoming Lemma \ref{Lemma:AllBiggerTimes} with $r = r_i = \frac12 \diam B_i$,
taking $t_0$ the maximum of the $\bar{t}(r_i, \cdot)$,
and choosing 
$$
\kappa=\frac{\delta}{3}\left(\|\rho\|_{L^\infty}\sum_{i=1}^{N}\left|\phi(x_i)\right|\cdot\left|B^i\right|\right)^{-1}
$$ 
we have that, for all $t\geq t_0$
\begin{equation}
\label{tearzfor}
I=\sum_{i=1}^N \left| \phi(x_{i}) \right| \left| \int_{B^i} \rho(t, x) dx  \right|\leq \sum_{i=1}^N \left| \phi(x_{i}) \right|\cdot \left|B^i\right|\|\rho\|_{L^\infty}\kappa\leq \frac{\delta}{3}\,.
\end{equation} 
For the second term we have that
\begin{equation}\label{e:anche}
II\leq \frac{\delta}{3}(\|\rho\|_{L^\infty}\pi)^{-1}\sum\limits_{i=1}^{N}\int_{B^i} \left| \rho(t, x) \right| dx\leq \frac{\delta}{3}\,.
\end{equation}
Finally, by \eqref{iiro} we can estimate
\begin{equation}
\label{esbjorn}
III\leq \left|A\right|\|\rho\|_{L^\infty}\|\phi\|_{L^\infty}\leq \frac{\delta}{3}\,.
\end{equation}
Hence combining \eqref{splitter} with equations \eqref{tearzfor}, \eqref{e:anche}, and~\eqref{esbjorn}, we have shown \eqref{waltzbill}, which completes the proof of Proposition \ref{WeakDecay}.
\end{proof}

\begin{lemma}\label{Lemma:AllBiggerTimes}
		Let $\rho_{0} \in L^{\infty}$ supported in $\overline{B_{1}}$ be an initial 
		datum for which we have~$\lim_{t \to \infty} \mathcal{G}(\rho(t, \cdot)) \to 0$ for all $\kappa \in (0,1)$.
		Then $\forall r>0,\kappa>0$ there exists a 
		time $\bar{t} = \bar{t}(r, \kappa)$ such that
		\begin{equation}
		\label{resu}
		\left|\fint_{B_r(x)}\rho(t,y)\, dy\right|\leq\kappa \|\rho\|_{\infty}
		\end{equation}
		for all $x$ and all $t\geq \bar{t}$.
	\end{lemma}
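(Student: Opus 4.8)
The plan is to transfer the smallness of averages over \emph{arbitrarily small} balls, guaranteed by the hypothesis, to averages over the \emph{fixed} ball $B_r(x)$, at the cost of halving the accuracy. First I would make two routine reductions. It suffices to treat $\kappa\in(0,1)$: for $\kappa\geq 1$ the bound \eqref{resu} is trivial since $|\fint_{B_r(x)}\rho(t,\cdot)|\leq\|\rho(t,\cdot)\|_{L^\infty}$, and one may take $\bar t=0$. Also I would record that $\|\rho(t,\cdot)\|_{L^\infty}=\|\rho_0\|_{L^\infty}=:\|\rho\|_{\infty}$ for every $t$, because the flow is measure-preserving. Then I would invoke the hypothesis \emph{with the auxiliary accuracy $\kappa/2\in(0,1)$}: since $\mathcal{G}(\rho(t,\cdot))\to 0$ for this accuracy, there is $\bar t=\bar t(r,\kappa)$ such that for every $t\geq\bar t$ the corresponding mixing scale is strictly smaller than $\kappa r/8$. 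As $\mathcal{G}$ is by definition the infimum of the set of admissible radii, this produces, for each such $t$, a radius $\varepsilon=\varepsilon(t)$ with $0<\varepsilon<\kappa r/8$ and
$$
\Big|\fint_{B_\varepsilon(y)}\rho(t,\cdot)\Big|\leq\frac{\kappa}{2}\,\|\rho\|_{\infty}\qquad\text{for all }y\in\mathbb{R}^2.
$$

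The core of the proof is the following deterministic comparison between averages at two scales: for every bounded, compactly supported $\sigma$ on $\mathbb{R}^2$, every $x$, and every $0<\varepsilon<r$,
$$
\Big|\fint_{B_r(x)}\sigma\Big|\leq\sup_{y\in\mathbb{R}^2}\Big|\fint_{B_\varepsilon(y)}\sigma\Big|+\frac{4\varepsilon}{r}\,\|\sigma\|_{L^\infty}.
$$
To prove it I would set $K_\varepsilon:=(\pi\varepsilon^2)^{-1}\chi_{B_\varepsilon}$, so that $h:=\sigma * K_\varepsilon$ satisfies $h(y)=\fint_{B_\varepsilon(y)}\sigma$, hence $\|h\|_{L^\infty}=\sup_y|\fint_{B_\varepsilon(y)}\sigma|$. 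The key point is that $\chi_{B_r(x)}$ is \emph{almost invariant} under convolution with $K_\varepsilon$: the function $\chi_{B_r(x)} * K_\varepsilon$ equals $1$ on $B_{r-\varepsilon}(x)$, vanishes outside $B_{r+\varepsilon}(x)$, and takes values in $[0,1]$ everywhere, so that
$$
\big|\chi_{B_r(x)}-\chi_{B_r(x)} * K_\varepsilon\big|\leq\chi_{B_{r+\varepsilon}(x)\setminus B_{r-\varepsilon}(x)}.
$$
Then I would split $\int_{B_r(x)}\sigma=\int\sigma\,(\chi_{B_r(x)} * K_\varepsilon)+\int\sigma\,(\chi_{B_r(x)}-\chi_{B_r(x)} * K_\varepsilon)$; since $K_\varepsilon$ is even, Fubini turns the first integral into $\int_{B_r(x)}h$, which is bounded by $\|h\|_{L^\infty}\pi r^2$, while the second is bounded by $\|\sigma\|_{L^\infty}|B_{r+\varepsilon}(x)\setminus B_{r-\varepsilon}(x)|=4\pi r\varepsilon\,\|\sigma\|_{L^\infty}$. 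Dividing by $|B_r(x)|=\pi r^2$ gives the displayed inequality.

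Combining the two ingredients: applied with $\sigma=\rho(t,\cdot)$ and $\varepsilon=\varepsilon(t)<\kappa r/8$, the comparison inequality gives, for every $x$ and every $t\geq\bar t$,
$$
\Big|\fint_{B_r(x)}\rho(t,\cdot)\Big|\leq\frac{\kappa}{2}\,\|\rho\|_{\infty}+\frac{4\varepsilon(t)}{r}\,\|\rho\|_{\infty}<\frac{\kappa}{2}\,\|\rho\|_{\infty}+\frac{\kappa}{2}\,\|\rho\|_{\infty}=\kappa\,\|\rho\|_{\infty},
$$
which is precisely \eqref{resu}. The one delicate point — and the reason one should not try to argue by a naive covering — is that tiling $B_r(x)$ by disjoint balls of the \emph{single} available radius $\varepsilon$ leaves an uncovered set of positive proportion (the disk-packing defect $1-\pi/(2\sqrt{3})$), which would ruin the estimate for small $\kappa$. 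The convolution/duality identity above sidesteps this by effectively smearing the small-ball bound, confining the loss to the thin annulus of width $2\varepsilon$ around $\partial B_r(x)$; everything else is elementary.
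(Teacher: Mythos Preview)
Your proof is correct and follows essentially the same approach as the paper: both arguments average the small-ball averages over the large ball via Fubini (you package this as the convolution identity $\int\sigma\,(\chi_{B_r(x)}*K_\varepsilon)=\int_{B_r(x)}\sigma*K_\varepsilon$, the paper writes out the corresponding double integral explicitly) and control the remainder by the measure of the annulus $B_{r+\varepsilon}\setminus B_{r-\varepsilon}$. Your presentation is slightly tighter (requiring $\varepsilon<\kappa r/8$ versus the paper's $\delta\leq\kappa r/12$, since you avoid an extra annulus term), but the underlying mechanism is identical.
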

	\begin{proof}
		Fix $\kappa>0$ and $r>0$. Our goal is to find a time $\bar{t}$ such that \eqref{resu} holds. Since for $\kappa'=\kappa/2$ we have that $\mathcal{G}_{\kappa'}(\rho(t,\cdot))\to 0$, there exists a time $\bar{t}$, such that for all $t\geq \bar{t}$ there exists a radius $\delta=\delta(t)\leq \frac{\kappa}{12}r$ such that
		\begin{equation}\label{UsingAlsoThisOne}
		\left|\fint_{B_{\delta(t)}(x)}\rho(t,y)\, dy\right|<\frac{\kappa}{2} \|\rho\|_{\infty}
		\end{equation}
		for all $x$. For an arbitrary $x$ and $t\geq \bar{t}$ we have (hereafter $\delta=\delta(t)$)
		\begin{equation}
		\begin{split}
		\int_{B_r(x)}\fint_{B_{\delta}(z)}\rho(t,y)\,dy \, dz&=\frac{1}{|B_\delta|}\int_{B_r(x)}\int_{B_{r+\delta}(x)}\mathds{1}_{B_\delta(z)}(y)\rho(t,y)\,dy \, dz \\
		&=\frac{1}{|B_\delta|}\int_{B_{r+\delta}(x)}\rho(t,y)\int_{B_r(x)}\mathds{1}_{B_\delta(y)}(z)\, dz\,dy  \\
		&=\frac{1}{|B_\delta|}\int_{B_{r-\delta}(x)}\rho(t,y)\int_{B_r(x)}\mathds{1}_{B_\delta(y)}(z)\, dz\,dy  \\
		&+\frac{1}{|B_\delta|}\int_{B_{r+\delta}(x)\setminus B_{r-\delta}(x)}\rho(t,y)\int_{B_r(x)}\mathds{1}_{B_\delta(y)}(z)\, dz\,dy  \\
		&=\int_{B_{r-\delta}(x)}\rho(t,y)\,dy \\
		&+ \frac{1}{|B_\delta|}\int_{B_{r+\delta}(x)\setminus B_{r-\delta}(x)}\rho(t,y)\int_{B_r(x)}\mathds{1}_{B_\delta(y)}(z)\, dz\,dy \, ,
		\end{split}
		\end{equation}
                 where as usual $\mathds{1}_A(x) = 1$ if $x \in A$ and $\mathds{1}_A(x) = 0$ if $x \notin A$. 
		Namely, we proved the identity
		\begin{align}\label{migi}
		\int_{B_{r-\delta}(x)}  \rho(t,y)\,dy 
		& =
		\int_{B_r(x)}\fint_{B_{\delta}(z)}\rho(t,y)\,dy \, dz
		\\ \nonumber
               &	
               	- \frac{1}{|B_\delta|}\int_{B_{r+\delta}(x)\setminus B_{r-\delta}(x)}\rho(t,y)\int_{B_r(x)}\mathds{1}_{B_\delta(y)}(z)\, dz\,dy
		\end{align}
		Since
		\begin{equation}
		\begin{split}
		\left|\fint_{B_r(x)}\rho(t,y)\, dy\right|&\leq\frac{1}{|B_r|}\left|\int_{B_{r-\delta}(x)}\rho(t,y)\,dy\right|
		+\|\rho\|_{\infty}\frac{\left|B_r\setminus B_{r-\delta}\right|}{|B_r|}
		\end{split}
		\end{equation}
		using \eqref{migi}, triangle inequality and \eqref{UsingAlsoThisOne} we arrive to
		\begin{equation}
		\begin{split}
		\left|\fint_{B_r(x)}\rho(t,y)\, dy\right|
		&\leq \left|\fint_{B_r(x)}\fint_{B_{\delta}(z)}\rho(t,y)\,dy \, dz\right|+ \|\rho\|_{\infty}\frac{\left|B_{r+\delta}\setminus B_{r-\delta}\right|}{|B_r|}\\
		&+ \|\rho\|_{\infty}\frac{\left|B_r\setminus B_{r-\delta}\right|}{|B_r|}\\
		&\leq 
		\left(\frac{\kappa}{2}+4\frac{\delta}{r}+2\frac{\delta}{r}\right)\|\rho\|_{\infty} \, .
		\end{split}
		\end{equation}
                 Hence, using that $\delta=\delta(t)\leq \frac{\kappa}{12}r$, we conclude that
			\begin{equation}
			\left|\fint_{B_r(x)}\rho(t,y)\, dy\right|\leq\kappa \|\rho\|_{\infty}
			\end{equation}
			for all $x$ and all $t\geq \bar{t}$.
		
	\end{proof}


\begin{proposition}\label{NecCond}
Let $\rho_{0} \in L^{\infty}$ supported in $\overline{B_{1}}$ be a mean-free initial datum for which $\lim_{t \to \infty}\| \rho(t, \cdot) \|_{\dot{H}^{-1}} = 0$ or
$\lim_{t \to \infty} \mathcal{G}(\rho(t, \cdot)) = 0$ for all $\kappa \in (0,1)$. Then $\rho_0$ has to satisfy Assumption~\ref{equalradius3}.
\end{proposition}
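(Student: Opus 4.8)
The starting point is a structural property of the velocity field \eqref{themixa}: since $u\cdot x=0$ and $|u|$ depends only on $|x|$, the (smooth) flow $\Phi_t$ of $u$ leaves every circle $\partial B_r$ invariant and acts on it as the rigid rotation of angle $-2\pi r t$; in particular $\Phi_t$ preserves the arclength measure $dS_r$ on each $\partial B_r$ (and it also preserves $\overline{B_1}$). As $u$ is divergence-free, the solution of \eqref{Cauchyprob} is transported along the flow, $\rho(t,\Phi_t(\cdot))=\rho_0$, and the change of variables $z=\Phi_t(y)$ on $\partial B_r$ therefore gives, for a.e. $r>0$ and every $t\ge0$,
\[
\int_{\partial B_r}\rho(t,\cdot)\,dS_r=\int_{\partial B_r}\rho_0\,dS_r=:h(r)\,.
\]
By Fubini $h\in L^1(0,\infty)$ with $\int_0^\infty|h(r)|\,dr\le\|\rho_0\|_{L^1}<\infty$, and $h(r)=0$ for every $r>1$ because $\rho_0$ vanishes outside $\overline{B_1}$. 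It thus suffices to prove that $h(r)=0$ for a.e. $r\in(0,1)$.

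To this end I would test $\rho(t,\cdot)$ against radial functions. Fix $\psi\in C^\infty_c((0,1))$, set $\phi(x):=\psi(|x|)$ (a smooth function compactly supported in $B_1$), and use polar coordinates together with the conservation identity above to write
\[
\int_{B_1}\rho(t,x)\,\phi(x)\,dx=\int_0^1 h(r)\,\psi(r)\,dr\qquad\text{for every }t\ge0\,,
\]
so the left-hand side is independent of $t$. The crux is that this quantity nevertheless tends to $0$ as $t\to\infty$ under either hypothesis. If $\lim_{t\to\infty}\|\rho(t,\cdot)\|_{\dot{H}^{-1}}=0$, then, since $\rho(t,\cdot)$ is mean-free and $\nabla\phi\in L^2$, the duality definition \eqref{tinaf} yields $\bigl|\int_{B_1}\rho(t,x)\phi(x)\,dx\bigr|\le\|\rho(t,\cdot)\|_{\dot{H}^{-1}}\,\|\nabla\phi\|_{L^2}\to0$. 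If instead $\mathcal{G}(\rho(t,\cdot))\to0$ for every $\kappa\in(0,1)$, then Proposition~\ref{WeakDecay} gives $\rho(t,\cdot)\rightharpoonup0$ weakly in $L^2(B_1)$, and testing this convergence against $\phi\in L^2(B_1)$ again yields $\int_{B_1}\rho(t,x)\phi(x)\,dx\to0$. In both cases we conclude that $\int_0^1 h(r)\psi(r)\,dr=0$.

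Since $\psi\in C^\infty_c((0,1))$ was arbitrary and $h\in L^1(0,1)$, the fundamental lemma of the calculus of variations forces $h(r)=0$ for a.e. $r\in(0,1)$; combined with $h\equiv0$ on $(1,\infty)$ this says precisely that $\int_{\partial B_r}\rho_0\,dS_r=0$ for a.e. $r>0$, i.e. that $\rho_0$ satisfies Assumption~\ref{equalradius3}. The only delicate point in this scheme is the very first one — checking that the flow of \eqref{themixa} rotates each circle rigidly and hence conserves the circular means of the advected density — but this is immediate from the explicit form of $u$, so I do not anticipate a genuine obstacle; the remainder is a routine duality-and-density argument.
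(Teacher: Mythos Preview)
Your proof is correct and follows the same overall logic as the paper's: both arguments exploit that the flow of $u$ preserves circular/radial averages, reduce either hypothesis to weak-$L^2$ convergence of $\rho(t,\cdot)$ to zero (invoking Proposition~\ref{WeakDecay} for the geometric case), and then test to conclude that the circular means of $\rho_0$ vanish.

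The implementations differ slightly. The paper first notes that the integral of $\rho(t,\cdot)$ over any disk $B_r$ (hence over any annulus) is conserved, tests weak convergence against characteristic functions $\chi_{B_r}$, deduces that $\int_{r_1<|x|\le r_2}\rho_0=0$ for all $r_1<r_2$, and then applies the Lebesgue differentiation theorem in $r$. You instead conserve the circle integrals $h(r)=\int_{\partial B_r}\rho_0\,dS_r$ directly (using that the flow is a rigid rotation on each circle), test against smooth radial functions $\phi(x)=\psi(|x|)$ with $\psi\in C_c^\infty((0,1))$, and conclude via the fundamental lemma of the calculus of variations. Your choice of smooth test functions is mildly more natural for the $\dot H^{-1}$ hypothesis, since it lets you invoke the duality \eqref{tinaf} directly without first upgrading to weak-$L^2$ convergence; on the other hand, the paper's route through disk integrals makes the conservation law a one-line consequence of the divergence theorem, without needing the explicit rotational structure of the flow on each circle. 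Both are short and essentially equivalent.
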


\begin{proof}
Clearly it is sufficient to consider $r > 0$. Looking at the vector field $u$, it is immediate to check that the average of any solution $\rho(t, \cdot)$, advected by $u$, on any disk centered 
at the origin, is preserved. Namely we have, for all $r > 0$ (notice that $\rho(t,\cdot) = 0$ outside $B_{1}$): 
$$
\int_{|x| \leq r} \rho(t,x) dx = \int_{|x| \leq r} \rho_{0}(x) dx, \qquad \forall t \geq 0 \, .
$$ 
Thus, for any $r_{1} < r_{2}$, we still have 
\begin{equation}\label{GAsymptotic}
\int_{ r_{1} < |x| \leq r_{2}} \rho(t,x) dx = \int_{r_{1} < |x| \leq r_{2}} \rho_{0}(x) dx, \qquad \forall t \geq 0 \, .
\end{equation}
Using Proposition \ref{WeakDecay} in the case of the geometric mixing scale, we see that any of the assumptions of the current proposition imply that $\rho(t, \cdot)$ converges to zero weakly in $L^{2}$ as $t \to \infty$. Thus, testing 
against $\phi = \chi_{B_{r_{2}}}$ and $\phi=\chi_{B_{r_{1}}}$, we see that
\begin{equation}\nonumber
\lim_{t \to \infty}  \int_{r_{1} < |x| \leq r_{2}}  \rho(t, x) dx 
 = 
\lim_{t \to \infty} 
\left( \int_{|x| \leq r_{2}} \rho(t, x) dx - \int_{|x| \leq r_{1}} \rho(t, x) dx \right) = 0 \, ,
\end{equation} 
for all $r_{1} < r_{2}$. By \eqref{GAsymptotic}, this clearly implies  
\begin{equation}\label{PlusLebesgue}
\int_{r_{1} < r \leq r_{2}} \int_{\partial B_{r}}\rho_{0} \, dS_{r}  = \int_{r_{1} < |x| \leq r_{2}} \rho_{0}(x) dx = 0 \, ,
\end{equation}
for all $r_{1} < r_{2}$. Taking $r \in [r_{1}, r_{2}]$ and letting $r_{2} - r_{1} \to 0$, using \eqref{PlusLebesgue} and the Lebesgue differentiation theorem, 
we have proved that
\begin{equation}\label{QuasiFinito}
\int_{\partial B_r}\rho_0 \, dS_{r} = 0 \, ,
\end{equation}
for any $r > 0$ which is a Lebesgue point of the function $r > 0 \mapsto \int_{\partial B_r}\rho_0 \,dS_{r}$. Since this function is bounded
(recall that $\rho_{0} \in L^{\infty}$) and supported
on $(0,1]$, we obtain that \eqref{QuasiFinito} is valid for almost any $r > 0$, as claimed.
\end{proof}

\end{document}